\theoremstyle{plain} 
\numberwithin{equation}{section}
\newtheorem{thm}[equation]{Theorem}
\newtheorem{cor}[equation]{Corollary}
\newtheorem{lem}[equation]{Lemma}
\newtheorem{prop}[equation]{Proposition}
\theoremstyle{definition}
\newtheorem{defn}[equation]{Definition}
\theoremstyle{remark}
\newtheorem{rem}[equation]{Remark}
\newcommand{\E}{{\mathbb{E}}}
\newcommand{\N}{{\mathbb{N}}}
\newcommand{\R}{{\mathbb{R}}}
\newcommand{\Z}{{\mathbb{Z}}}
\newcommand{\cB}{{\mathcal B}}
\newcommand{\cD}{\mathcal{D}}
\newcommand{\cK}{{\mathcal K}}
\newcommand{\cR}{{\mathcal R}}
\newcommand{\cT}{{\mathcal T}}
\newcommand{\cX}{{\mathcal X}}
\newcommand{\cY}{{\mathcal Y}}
\newcommand{\sC}{{\mathsf C}}
\newcommand{\sk}{{\mathsf k}}
\newcommand{\pd}{{\mathsf{PD}}}
\newcommand{\pdbf}{{\mathbf{PD}}}
\newcommand{\im}{\mathrm{im\;}}
\newcommand{\coker}{\mathrm{coker\;}}
\newcommand{\dom}{\mathrm{dom\;}}
\newcommand{\X}{\mathcal{X}}
\newcommand{\setof}[1]{\left\{ {#1} \right\}}
\newcommand{\interval}[2]{\langle {#1}, {#2}\rangle}
\newcommand{\conv}[1]{\text{Conv}({#1})}
\newcommand{\overlap}{\lhd\kern-4.5pt{\blacktriangleleft}\text{\;}}
\newcommand{\bb}{\lhd}
\newcommand{\ba}{\blacktriangleleft}
\newcommand{\bra}[1]{\left<{#1}\right\vert}
\newcommand{\ket}[1]{\left\vert{#1}\right>}
\newcommand*{\matching}{%
  \mathrel{%
    \mathpalette\@vneq{\rightarrow}%
  }%
}
\newcommand*{\@vneq}[2]{%
  \sbox0{\raisebox{\depth}{$#1\neq$}}%
  \sbox2{\raisebox{\depth}{$#1|\,\m@th$}}%
  \ifdim\ht2>\ht0 %
    \sbox2{\resizebox{\vneqxscale\width}{\vneqyscale\ht0}{\unhbox2}}%
  \fi
  \sbox2{$\m@th#1\vcenter{\copy2}$}%
  \ooalign{%
    \hfil\phantom{\copy2}\hfil\cr
    \hfil$#1#2\m@th$\hfil\cr
    \hfil\copy2\hfil\cr
  }%
}
\newcommand*{\vneqxscale}{1}
\newcommand*{\vneqyscale}{.3}
\def\ps@pprintTitle{%
 \let\@oddhead\@empty
 \let\@evenhead\@empty
 \def\@oddfoot{\centerline{\thepage}}%
 \let\@evenfoot\@oddfoot}
\newcolumntype{L}{>{\centering\arraybackslash}m{4cm}}
\tikzset{strike thru/.style={
    decoration={markings, mark=at position 0.5 with {
        \draw [-]
            ++ ( 0pt,-1.25pt)
            -- ( 0pt, 1.25pt);}
    },
    postaction={decorate},
}}
\begin{document}

\begin{frontmatter}
\title{A Comparison Framework for Interleaved Persistence Modules}

\author[ru]{Shaun Harker}
\ead{sharker@math.rutgers.edu}

\author[tu]{Miroslav Kram\'{a}r}
\ead{miroslav.kramar@inria.fr}

\author[up]{Rachel Levanger}
\ead{levanger@seas.upenn.edu}

\author[ru]{Konstantin Mischaikow}
\ead{mischaik@math.rutgers.edu}

\address[ru]{Department of Mathematics,
Hill Center-Busch Campus,
Rutgers University,
110 Frelingheusen Rd,
Piscataway, NJ  08854-8019, USA}

\address[tu]{INRIA Saclay,
1 Rue Honoré d'Estienne d'Orves, 91120 Palaiseau, France}

\address[up]{Department of Electrical and Systems Engineering,
University of Pennsylvania,
200 S. 33rd St.,
Philadelphia, PA 19104-6314, USA}

\begin{abstract}

We present a generalization of the induced matching theorem of \cite{bauer_lesnick} and use it to prove a generalization of the algebraic stability theorem for $\R$-indexed pointwise finite-dimensional persistence modules. 
Via numerous examples, we show how the generalized algebraic stability theorem enables the computation of rigorous error bounds in the space of persistence diagrams that go beyond the typical formulation in terms of bottleneck (or log bottleneck) distance. 
\end{abstract}

\end{frontmatter}

\section{Introduction}
\label{sec:intro}
Persistent homology \cite{edelsbrunner, carlsson_zomorodian, oudot} is a key element in the rapidly-developing field of topological data analysis, where it is used both as a means of identifying geometric structures associated with data and as a data reduction tool.
Any work with data involves approximations that arise from finite sampling, limits to measurement, and experimental or numerical errors.
The results of this paper focus on obtaining rigorous bounds on the variations in persistence diagrams arising from these approximations.

To motivate this work, we begin with the observation that many problems in data analysis can be rephrased as a problem concerned with the analysis of the geometry induced by a scalar function $f\colon X\to \R$ defined on a topological space $X$.
Two canonical examples are as follows.
Assume that $(X,\rho)$ is a metric space and let $\cX\subseteq X$. 
Single-linkage hierarchical clustering problems based on $\cX$ are naturally associated with the function $f\colon X \to [0,\infty)$ given by
\[
f(x) := \rho(x, \cX) = \inf\setof{\rho(x,\xi) : \xi \in \cX},
\]
where clusters are derived from the connected components of the sublevel set
\[
\sC(f, t ) := \setof{x\in X : f(x) \leq t}
\]
for choices of $t\in [0,\infty)$. 
The collection $\{ \sC(f, t )\}_{t \in \R}$ is called the \emph{sublevel set filtration} of $X$ induced by $f$. 
Superlevel sets and superlevel set filtrations are defined similarly by considering the sets $\setof{x\in X : t \leq f(x)}$ for every $t \in \R$.

Alternatively, assume that $X$ is a  topological domain and $f\colon X\to \R$ is a scalar value of a nonlinear physical model,
e.g.\ the magnitude of vorticity or temperature field of a fluid, the chemical density in a reaction diffusion system, the magnitude of forces between particles in a granular system, etc.
Patterns produced by these systems are often associated with sublevel or superlevel sets of $f$.
In fact, the direct motivation for this work is to justify claims made in \cite{physicaD} concerning the time-evolution of patterns in convection models.

These examples are meant to motivate our interest in studying the geometry of the sets $\sC(f,  t)$.
Homology provides a coarse but computable representation of this geometry.
In particular, for each $t\in\R$, there is an assigned graded vector space
\[
M(f)_t = H_\bullet (\sC(f,  t),\sk),
\]
where $\sk$ is a field.
Because each $t\leq s$ implies $\sC(f,  t) \subseteq \sC(f,  s)$, the inclusion maps induce the following linear maps at the level of homology:
\[
\varphi_{M(f)}(t,s)\colon M(f)_t \to M(f)_s.
\]
This homological information can be abstracted as follows.

\begin{defn}
\label{defn:persistence_module}
A \emph{persistence module} is a collection of vector spaces indexed by the real numbers, $\{V_t\}_{t \in \R}$, and linear maps $\setof{\varphi_V(s, t) : V_s \rightarrow V_t}_{s \leq t \in \R}$ satisfying the following conditions:
\begin{enumerate}[(i)]
\item $\varphi_V(t, t) = \text{id}_{V_t}$ for every $t \in \R$, and 
\item $\varphi_V(s, t) \circ \varphi_V(r, s) = \varphi_V(r, t)$ for every $r \leq s \leq t$ in $\R$. 
\end{enumerate}
We write $(V, \varphi_V)$ to denote the collection of vector spaces and compatible linear maps, and will sometimes just write $V$ for the full persistence module when the maps are clear.  
We say that $V$ is a \emph{pointwise finite dimensional (PFD)} persistence module when every $V_t$ is finite-dimensional.
\end{defn}

As is described  in  Sections~\ref{subsec:2-preliminaries-Morphisms-Diagrams} and \ref{sec:algebraic_stability}, a PFD persistence module gives rise to a \emph{persistence diagram}, which is a set of points in $\overline{\R}^2 \times \N$, where $\overline{\R} = \R\cup\setof{-\infty,\infty}$. 
Given a PFD persistence module $(V, \varphi_V)$, we denote its associated persistence diagram by $\pd(V)$. 

Observe that we have outlined a procedure by which the sublevel sets of a scalar field $f$ produce a persistence diagram $\pd$.
Returning to our examples, in the first case, it is reasonable to assume that the actual available data is $\cX' \subseteq \cX\subseteq X$, as opposed to  $\cX$, which represents the true set of objects upon which the clustering is to be based. 
In this case, collecting experimental or numerical data results in $f'\colon X\to \R$, an approximation of the actual function of interest, $f$.
Recent computational developments have led to the routine computation of $\pd'$, the persistence diagram associated with $\cX'$ or $f'$. 
Thus, the natural question is this: how is $\pd'$, the computed persistence diagram, related to  $\pd$, the persistence diagram of interest?

A fundamental result \cite{edelsbrunner} in the theory of persistent homology is that a variety of metrics can be imposed on the space of persistence diagrams such that  $\pd$ changes continuously with respect to $L^\infty$ changes in $f$.
However, these metrics provide limited control on the variation of individual points in the persistence diagram.
Recent developments by Bauer and Lesnick \cite{bauer_lesnick} allow for comparisons of persistence modules through a matching of the associated persistence points.
The primary theoretical results of this paper, Theorem~\ref{thm:geninducedmatching}  and Theorem~\ref{thm:genalgstability}, are extensions of Bauer and Lesnick's Induced Matching Theorem and Algebraic Stability Theorem, respectively.

\begin{figure}[t]
\begin{picture}(300,250)

\put(50,0){
\begin{tikzpicture}
[scale=1]

		\fill[draw=black,color=gray]  (0,0) -- (0,1) -- (1,1) -- (0,0);
		\fill[draw=black,color=gray]  (1,1) -- (1,2) -- (2,2) -- (1,1);
		\fill[draw=black,color=gray]  (2,2) -- (2,3) -- (3,3) -- (2,2);
		\fill[draw=black,color=gray]  (3,3) -- (3,4) -- (4,4) -- (3,3);
		\fill[draw=black,color=gray]  (4,4) -- (4,5) -- (5,5) -- (4,4);
		\fill[draw=black,color=gray]  (5,5) -- (5,6) -- (6,6) -- (5,5);
		\fill[draw=black,color=gray]  (6,6) -- (6,7) -- (7,7) -- (6,6);
		\fill[draw=black,color=gray]  (7,7) -- (7,8) -- (8,8) -- (7,7);
		
		\fill[draw=black,color=lightgray]  (1,5) -- (1,6) -- (2,6) -- (2,5) -- (1,5);

		\draw [<->]  (8,0) --(0,0)  --(0,8);

		
		\draw [] (0,0) -- (8,8);
		
		 
		 \draw [dotted] (1,0) --(1,8); 
		 \draw [dotted] (2,0) --(2,8); 
		 \draw [dotted] (3,0) --(3,8); 
		 \draw [dotted] (4,0) --(4,8); 
		 \draw [dotted] (5,0) --(5,8); 
		 \draw [dotted] (6,0) --(6,8); 
		 \draw [dotted] (7,0) --(7,8); 
		 
		 \draw [] (1,0) --(1,-0.1); 
		 \draw [] (2,0) --(2,-0.1); 
		 \draw [] (3,0) --(3,-0.1); 
		 \draw [] (4,0) --(4,-0.1); 
		 \draw [] (5,0) --(5,-0.1); 
		 \draw [] (6,0) --(6,-0.1); 
		 \draw [] (7,0) --(7,-0.1); 
		 
		 \draw(1,-0.5) node{$1$}; 
		 \draw(2,-0.5) node{$2$}; 
		 \draw(3,-0.5) node{$3$}; 
		 \draw(4,-0.5) node{$4$}; 
		 \draw(5,-0.5) node{$5$}; 
		 \draw(6,-0.5) node{$6$}; 
		 \draw(7,-0.5) node{$7$}; 
		 
		 \draw [dotted] (0,1) --(8,1); 
		 \draw [dotted] (0,2) --(8,2); 
		 \draw [dotted] (0,3) --(8,3); 
		 \draw [dotted] (0,4) --(8,4); 
		 \draw [dotted] (0,5) --(8,5); 
		 \draw [dotted] (0,6) --(8,6); 
		 \draw [dotted] (0,7) --(8,7); 
		  
		 \draw [] (0,1) --(-0.1,1); 
		 \draw [] (0,2) --(-0.1,2); 
		 \draw [] (0,3) --(-0.1,3); 
		 \draw [] (0,4) --(-0.1,4); 
		 \draw [] (0,5) --(-0.1,5); 
		 \draw [] (0,6) --(-0.1,6); 
		 \draw [] (0,7) --(-0.1,7); 
	 
		 \draw(-0.5, 1) node{$1$};
		 \draw(-0.5, 2) node{$2$};
		 \draw(-0.5, 3) node{$3$};
		 \draw(-0.5, 4) node{$4$};
		 \draw(-0.5, 5) node{$5$};
		 \draw(-0.5, 6) node{$6$};
		 \draw(-0.5, 7) node{$7$};	
		 
		 \node at (2,6) [circle,fill, scale=0.5] {};
		 \draw (0,1) circle (3pt);
		 \draw (1,2) circle (3pt);
		 \draw (2,3) circle (3pt);
		 \draw (3,4) circle (3pt);
		 \draw (4,5) circle (3pt);
		 \draw (5,6) circle (3pt);
		 \draw (6,7) circle (3pt);
		 \draw (7,8) circle (3pt);
		 
		  \draw [thick,dashed] (0,2) --(6,8);
		  \draw [thick,dashed] (1,5) --(1,7) -- (3,7) -- (3,5) -- (1,5);

		\draw(4,-1) node{$\pd(V^\Z)$};   
\end{tikzpicture}
}		
		
\end{picture}
\caption{ A schematic diagram illustrating the potential locations of persistence points of $\pd(V)$ based on the computation of $\pd(V^{\Z})$. 
The persistence point $(2,6)$ of $\pd(V^\Z)$ is matched with a persistence point of $\pd(V)$ which must lie in the light gray region.
If a persistence point of $\pd(V^\Z)$ occurs at one of the open circles, then it is possible that it is a computational artifact, i.e.\ that it is not matched with any persistence point in $\pd(V)$.
The dark gray region indicates the potential location of persistence points of $\pd(V)$ that cannot be detected because of the integer valued approximation used to compute $\pd(V^\Z)$.
Finally,  if $W$ is an arbitrary PFD persistence module and the bottleneck distance between $\pd(V^\Z)$ and $\pd(W)$ is one, then  $\pd(W)$ may have a single point in the region indicated by the dashed square, and arbitrarily many persistence points in the region below the dashed line.
}
\label{fig:Zpd}
\end{figure}

As indicated above, the applications of these extensions provided the motivation for this paper. 
To give a particular example, consider the persistence module $V=(M(f),\varphi_{M(f)})$ associated with the scalar function $f\colon X\to \R$.
However, assume that we are only able to sample the sublevel sets of $f$ at the integers $\Z\subset \R$.
As explained in Section~\ref{sec:example_discretize}, this sampling gives rise to a persistence module $V^\Z$.
Figure~\ref{fig:Zpd} indicates the type of result that we obtain.
What is shown is the persistence diagram $\pd(V^\Z)$ (which can be computed) which, for the region shown, is assumed to have a single persistence point at $(2,6)$.
As a consequence of Proposition~\ref{prop:discretized_pm}, we can conclude that the persistence diagram of interest, $\pd(V)$, contains a single persistence point in the light gray region.
It is also possible that $\pd(V)$ contains persistence points in the dark gray regions.  
This would correspond to geometrical features of $f\colon X\to \R$ that take place on a scale that is to fine to be detected by the integer-valued sampling.
Finally, if a persistence point for $\pd(V^\Z)$ occurred at one of the open circles centered at $(n,n+1)$, then this persistence point could be a computational artifact, i.e.\ it is not necessarily associated with any persistence point of $\pd(V)$.

Figure~\ref{fig:Zpd} also indicates the advantage of comparing persistence diagrams using the matching theorems of this paper as opposed to the classical metrics such as the bottleneck distance \cite{Cohen-Steiner}.
In particular, if $\pd(W)$ is an arbitrary persistence diagram whose bottleneck distance from $\pd(V^\Z)$ is one, then  $\pd(W)$ may have a single point in the region indicated by the dashed square and arbitrarily many persistence points in the region below the dashed line, versus a single point in the light gray box and arbitrarily many points in the dark gray region.

An outline of this paper is as follows.
In Section~\ref{sec:preliminaries} we review the essential concepts associated with persistence modules required for our results.
This section defines the notions of persistence modules and their morphisms, interleavings, and induced matchings.
Of particular note is the introduction of the concept of a non-constant  translation pair that is used to extend the results of Bauer and Lesnick \cite{bauer_lesnick}, where translation pairs are defined in terms of uniform translations.
We also include a review of Galois connections, as we use these concepts for some proofs in Section~\ref{sec:algebraic_stability}.

Section~\ref{sec:gen_induced_matching} focuses on Theorem~\ref{thm:geninducedmatching}, which is an extension of the Induced Matching Theorem of  \cite{bauer_lesnick}.
The proof incorporates ideas from the theory of generalized interleavings of Bubenik, de Silva, and Scott \cite{bubenik_desilva_scott}. 

Section~\ref{sec:algebraic_stability} begins with the proof of Theorem~\ref{thm:genalgstability}, which follows closely the proof of the Algebraic Stability Theorem of  \cite{bauer_lesnick}.
The remainder of the section provides results, corollaries, and re-interpretations of Theorem~\ref{thm:genalgstability}.
In particular, under the assumption that the maps in the translation pair are invertible, Corollary~\ref{cor:genalgstab_invertible} provides an easy-to-state version of Theorem~\ref{thm:genalgstability} that clarifies how translation pairs relate to stability in the space of persistence diagrams.
Proposition~\ref{prop:Matching} and Corollary~\ref{cor:genalgstab_invertible_pi} indicate how Theorem~\ref{thm:genalgstability} applies to specific points in the associated persistence diagrams.

Finally, Section~\ref{sec:examples} provides examples of applications of Theorem~\ref{thm:genalgstability}.
As indicated above Section~\ref{sec:example_discretize} considers the problem of bounds on the desired persistence diagram under the assumption that  values of the function $f\colon X\to \R$ can only be sampled discretely.

In Section~\ref{sec:example_point_clouds}, we consider the following problem associated with the first example of this introduction.
Assume that one is given a large finite point cloud $\cX\subset X$ for which one wishes to compute the persistence diagram $\pd(V)$ for the persistence module $V= (M(f),\varphi_{M(f)})$.
However, because of the size of $\cX$, the computational cost of computing $\pd(V)$ is prohibitive.
At the time of this writing, this is a reasonable concern since the standard approach is to use a Vietoris-Rips complex (this is discussed at the beginning of Section~\ref{sec:example_point_clouds}) to compute $\pd(V)$, and the size of this complex grows extremely fast as a function of the size of $\cX$ and the magnitude of $f$.
This suggests that once the magnitude of $f$ is too large, then one should subsample  and compute an approximate persistence diagram $\pd(V')$ based on $\cX'\subset \cX$.
Proposition~\ref{prop:subsample_interleaving} provides a simple result bounding the locations of the persistence points in $\pd(V)$ based on $\pd(V')$.
This result immediately suggests that if one could make use of a sequence of subsamples associated with a sequence of values of $f$, then one could get a better approximation than just making use of a single subsampling.
To obtain this result, we introduce in Section~\ref{subsec:merge_module} the concept of stitching two persistence modules together to create a new persistence module.
In Section~\ref{subsec:pointcoud_subsample}, we outline how this can be used to obtain bounds on the persistence diagram of $\cX$ from a sequence of subsamples $\cX = \cX_0 \supset \cX_1\supset \cdots \supset \cX_N$ and the associated persistence diagrams.

It can be argued that for applications, the most difficult task is the construction of the interleaving between the two persistence modules.
However, as we hope the examples of Section~\ref{sec:examples} illustrate, once the interleavings are determined, working with our framework is straightforward.
With this in mind, we include Table~\ref{table:approx} in Section~\ref{sec:comparison}, providing an easily-referenced list of translation maps of generalized interleavings for common approximations to Vietoris-Rips and \u{C}ech filtrations.

\section{Preliminaries}
\label{sec:preliminaries}
In this section, we summarize background material and establish notation for the work we present in this paper.
In Section~\ref{subsec:2-preliminaries-Morphisms-Diagrams}, we recall basic facts about persistence modules, their morphisms, and persistence diagrams.   In Section~\ref{subsec:2-preliminaries-Interleavings} we provide a necessary background for  interleavings of persistence modules. In Section~\ref{subsec:2-preliminaries-GaloisConnections} we give a treatment of  monotone functions and Galois connections, and we define matchings. Section~\ref{subsec:induced_matchings} introduces matchings between persistence diagrams induced by morphisms of persistence modules and recalls the results of Bauer and Lesnick \cite{bauer_lesnick} concerning these matchings.

\subsection{Persistence Modules, Persistence Module Morphisms, and Persistence Diagrams}
\label{subsec:2-preliminaries-Morphisms-Diagrams}
This section provides basic facts about persistence modules (Definition~\ref{defn:persistence_module}).
For alternative treatments, see \cite{bauer_lesnick, carlsson_zomorodian, bubenik_desilva_scott, chazal_desilva}.

\begin{defn}
A persistence module $V$ is \emph{trivial} if $V_t =0$ for all $t \in \R$. 
\end{defn}

\begin{defn}
\label{defn:interval_module}
Let $J \subseteq \overline{\R }$ be a nonempty interval and let $\sk$ denote a field.
The \emph{interval persistence module} $(I_{J}, \varphi_{J})$ is defined by the vector spaces 
 \[
   (I_{J})_t := \left\{
     \begin{array}{ll}
       \sk  \text{ if\;} t \in J,\\
       0  \text{ otherwise,}
     \end{array}
   \right.\\
 \]
 and transition maps
 \[
   \varphi_{J}(s, t) := \left\{
     \begin{array}{ll}
       \text{id}_{\sk}  \text{ if\;} s, t \in J, \\
       0  \text{ otherwise.}
     \end{array}
   \right.
\]
\end{defn}

\begin{defn}
\label{defn:persistence_module_morphism}
Let $(V, \varphi_V)$ and $(W, \varphi_W)$ be persistence modules. A \emph{persistence module morphism} $\phi \colon V \rightarrow W$ is a collection of linear maps $\{\phi_t \colon V_t \rightarrow W_t\}_{t \in \R}$ such that the following diagram commutes for all $s, t \in \R$ with $s \leq t$.
\begin{equation*}
\begin{tikzcd}
V_s \arrow[d, rightarrow,"\phi_s"] \arrow{r}{\varphi_V(s,t)} & V_t \arrow[d, rightarrow,"\phi_t"] \\
W_s \arrow{r}{\varphi_W(s,t)} & W_t
\end{tikzcd}
\end{equation*}
If $\phi_t$ is injective (surjective) for every $t \in \R$, then we say that $\phi$ is a monomorphism (epimorphism). 
A persistence module morphism that is both a monomorphism and an epimorphism is an isomorphism.
\end{defn}

Persistence modules and their morphisms form an abelian category \cite{Bubenik2014}. 
Thus, it makes sense to talk about submodules, quotients, and direct sums of persistence modules. 
Moreover, the kernel and image of a persistence module morphism are submodules, and the cokernel of a persistence module morphism is a quotient persistence module. 
The following fundamental result (see \cite{chazal_desilva, crawley-boevey}) guarantees that  nontrivial PFD persistence modules are direct sums of interval persistence modules.

\begin{thm}
\label{thm:interval_decomp}
Every non-trivial PFD persistence module $V$ is a direct sum of interval persistence modules. Moreover, the direct sum decomposition of $V$ into interval persistence modules is unique up to a reindexing of these interval persistence modules.
\end{thm}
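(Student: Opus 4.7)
The plan is to establish existence and uniqueness separately, following the classical approach of Crawley-Boevey to structure theorems for pointwise finite-dimensional representations of totally ordered posets. The first step reduces the existence claim to the assertion that every indecomposable PFD persistence module is (isomorphic to) an interval module $I_J$ for some nonempty $J \subseteq \overline{\R}$. Granting that, I would then obtain the decomposition of a general PFD module by a Zorn's lemma argument on the partially ordered set of internal direct-sum systems whose summands are indecomposable; the pointwise finite-dimensionality forces every chain to stabilize pointwise and prevents the decomposition process from continuing past a maximal element.

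The main step, and the one I expect to be the hardest, is showing that an indecomposable PFD persistence module $V$ has the form $I_J$. I would first observe that for any finite subset $S \subset \R$, the restriction $V|_S$ is a finite-dimensional representation of a finite type-$A_n$ quiver, and hence decomposes uniquely into summands indexed by intervals of $S$. The challenge is to upgrade these local decompositions into a global one: one needs to patch together compatible local summands as $S$ varies over a directed system of finite subsets of $\R$. This is typically accomplished by defining, for each point $t$ and each interval endpoint pattern, a canonical subspace of $V_t$ obtained from the local decompositions, and then using pointwise finite-dimensionality together with a Mittag-Leffler / inverse limit argument to show that the resulting subspaces assemble into a persistence submodule isomorphic to an interval module. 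Indecomposability then forces $V$ itself to be one such interval module. The delicate point is verifying that the local decompositions can be chosen coherently, which is where the PFD hypothesis is essential.

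For the uniqueness statement, I would invoke the Krull-Schmidt-Azumaya theorem. The endomorphism ring of any interval persistence module $I_J$ is easily computed to be $\sk$ (any endomorphism is determined by a single scalar on each nonzero fiber, and the commutativity requirement from Definition~\ref{defn:persistence_module_morphism} forces a single common scalar). Since $\sk$ is local, each interval summand has a local endomorphism ring, and Krull-Schmidt-Azumaya then gives uniqueness of the multiset of summands up to isomorphism and reindexing. Finally, one observes that $I_J \cong I_{J'}$ if and only if $J = J'$, so the uniqueness is at the level of the underlying intervals rather than just isomorphism classes.
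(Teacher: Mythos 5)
First, note that the paper does not actually prove Theorem~\ref{thm:interval_decomp}: it is quoted from the literature (\cite{crawley-boevey}, see also \cite{chazal_desilva}), so your proposal can only be measured against those proofs. Against that standard, your plan has genuine gaps at exactly the two hard points. The Zorn step is the first: a maximal internal direct-sum system whose summands are indecomposable need \emph{not} have sum equal to $V$. Pointwise finite-dimensionality does show that the union of a chain of such systems is again such a system (each $V_t$ being finite-dimensional makes the relevant chains of subspaces of $V_t$ stabilize), but it does not ``prevent the decomposition process from continuing past a maximal element.'' To conclude you need two facts you never address: that the sum of a maximal family is a direct summand whose complement is zero (in practice one carries a complement along in the Zorn argument), and, crucially, that every nonzero PFD persistence module has an indecomposable direct summand. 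That lemma is where the real work lies --- it is essentially the content of the Botnan--Crawley-Boevey decomposition theorem --- and its proof is not a routine application of Zorn's lemma; your sketch silently assumes it.

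The second gap is in what you call the main step, indecomposable $\Rightarrow$ interval, which is deferred precisely at its crux. Interval decompositions of the finite restrictions $V|_S$ (type-$A_n$ representations) are not canonical, so there is no well-defined ``subspace of $V_t$ obtained from the local decompositions'' to feed into an inverse-limit or Mittag-Leffler argument; the canonical objects are the functorial filtrations by $\im \varphi_V(s,t)$ and $\ker \varphi_V(t,u)$, and the heart of Crawley-Boevey's proof is the simultaneous, coherent choice of splittings of these filtrations over all $t$ --- the very point you flag as ``delicate'' and do not carry out. Moreover, once that work is done it produces the full interval decomposition of an arbitrary PFD module directly, so the detour through indecomposables (and hence your problematic Zorn step) is unnecessary; and since $\R$ is uncountable, a naive compactness/patching argument over finite subsets needs further care in any case. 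By contrast, the uniqueness half of your plan is fine: $\mathrm{End}(I_J) \cong \sk$ is local, interval modules with distinct intervals are non-isomorphic, and Krull--Remak--Schmidt--Azumaya then gives uniqueness up to reindexing --- this is the standard argument.
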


This direct sum decomposition is called the \emph{interval decomposition} of $V$, which we represent using the definitions that follow. 

\begin{defn}

The set $\E$ of \emph{decorated points} is defined by
\[
\E := \R \times \setof{-,+} \cup \setof{-\infty, \infty}.
\]
For $ t \in \R$, define  $t^- := (t,-)$ and $t^+ := (t,+)$.  
Consider the ordering $- < +$ on the set $\setof{-,+}$. 
Then there is a natural ordering on $\E$ induced by a lexicographical ordering of $\overline{\R}$ and $\setof{-,+}$, in that order,  with $\{-\infty\}$ the minimal element and $\{\infty\}$ the maximal element.

\end{defn}

\begin{defn}
\label{defn:decorInt}
 Let $a,b \in \overline{\R}$ such that $a \leq b$. Any nonempty interval $J$ with endpoints $a$ and $b$ can be represented by an ordered pair $(\cB(J),\cD(J))$  of decorated points where:
 \[
 \cB(J) := \left\{
     \begin{array}{ll}
       -\infty  \text{ if\;} a  = -\infty, \\
       a^-  \text{ if\;} J \text{ is left closed,} \\
       a^+   \text{ if\;} J \text{ is left open,}
     \end{array}
   \right.
   \quad \text{and} \quad
   \cD(J) := \left\{
     \begin{array}{ll}
       \infty  \text{ if\;} b  = \infty, \\
       b^-  \text{ if\;} J \text{ is right open,} \\
       b^+   \text{ if\;} J \text{ is right closed.}
     \end{array}
   \right.
\]
For an ordered pair $(d_1,d_2)$ of decorated points with $d_1 < d_2$, we denote  the interval they represent by $\interval{d_1}{d_2}$.
\end{defn}

\begin{defn}
Let $V$ be  a PFD persistence module and $\mathscr{J}_V$ be a multiset of interval persistence modules in the interval decomposition of $V$. Suppose that the function ${m \colon \mathscr{J}_V \to \N}$ assigns to every interval persistence module $I_J \in \mathscr{J}_V$  its multiplicity   in  $\mathscr{J}_V$. The  \emph{persistence diagram} of $V$ is defined as the set
\[
\pdbf(V) :=  \bigcup_{I_J \in \mathscr{J}_V} \setof{ [\cB(J), \cD(J), 1], \ldots, [\cB(J), \cD(J),m(I_J)]} \subset \E\times\E\times\N.
\]
\end{defn}

Note that for every interval persistence module present in the interval decomposition of $V$, there is exactly one point in the persistence diagram. These points can be  totally ordered as in the following definition.

\begin{defn}
Let $\pdbf$ be a persistence diagram. The \emph{ left-handed ordering} of the points $[b,d,i] \in \pdbf$ is given by a lexicographical ordering applied to $(b,-d,i)$, where the minus sign indicates reversing the ordering for the second coordinate. The  \emph{ right-handed ordering} of $\pdbf$ is given by a lexicographical ordering applied to  $(d,b,i)$.
\end{defn}

\subsection{Persistence Module Interleavings}
\label{subsec:2-preliminaries-Interleavings}

In this section we review the notion of persistence module interleavings, introduced by Chazal, et al. in \cite{chazal_proximity} and generalized by Bubenik, et al. in \cite{bubenik_desilva_scott}.
Interleavings provide a measure of similarity  between persistence modules.

\begin{defn}
\label{defn:pretranslation}
A function $\sigma : \R \rightarrow \R$ is \emph{monotone} if $x \leq y$ implies that $\sigma(x) \leq \sigma(y)$. 
If, in addition, $x \leq \sigma(x)$ for all $x \in \R$, then $\sigma$ is called a \emph{translation map}. 
\end{defn}

\begin{defn}
\label{defn:translation_pair}
A pair  $(\tau, \sigma)$ of monotone functions is a \emph{translation pair} if     $\tau \circ \sigma$ and $\sigma \circ \tau$ are translation maps. 
\end{defn}

\begin{defn}
\label{defn:shifted_persistence_module}
Let $\sigma : \R \rightarrow \R$ be monotone and let $(V, \varphi_V)$ be a persistence module.
The \emph{$\sigma$-shifted persistence module} $(V(\sigma), \varphi_{V(\sigma)})$ is defined by the vector spaces
\[
   V(\sigma)_t := V_{\sigma(t)}
\]
for  $t \in \R$ and transition maps
\[
   \varphi_{V(\sigma)}(s, t) := \varphi_V(\sigma(s), \sigma(t))
\]
for every $s \leq t \in \R$.
\end{defn}

\begin{defn}
Let $(V, \varphi_V)$ and $(W, \varphi_W)$ be  persistence modules,  $\phi : V \rightarrow W$ a persistence module morphism, and $\sigma\colon \R \to \R$ a monotone function. 
The \emph{$\sigma$-shifted persistence module morphism} $\phi(\sigma) : V(\sigma) \rightarrow W(\sigma)$ is defined by
\[
\phi(\sigma)_t := \phi_{\sigma(t)}
\]
for every $t \in \R$.
\end{defn}

\begin{defn}
\label{defn:general_interleaving}
Let $(V, \varphi_V)$ and $(W, \varphi_W)$ be persistence modules and let $(\tau, \sigma)$ be a translation pair. The ordered pair of  persistence modules $(V,W)$ is \emph{$(\tau, \sigma)$-interleaved} if there exist persistence module morphisms $\phi\colon V \rightarrow W(\tau)$ and ${\psi \colon W \rightarrow V(\sigma)}$ such that 
\[
\psi(\tau)_t \circ \phi_t = \varphi_V[t, (\sigma \circ \tau)(t)]
\]
and
\[
\phi(\sigma)_t \circ \psi_t = \varphi_W[t, (\tau \circ \sigma)(t)]
\]
for all $t \in \R$.
We refer to these last two conditions as the \emph{commutativity constraint} of the interleaving.
The persistence module morphisms $\phi$ and $\psi$ are called \emph{interleaving morphisms}.
\end{defn}

\begin{defn}
\label{rem:ker_morphism}
Given  a persistence module $V$  and a translation map $\sigma$, define a persistence module morphism $\phi_{\setof{V,\sigma}} \colon V \rightarrow V(\sigma)$ by
 $(\phi_{\setof{V,\sigma}})_t := \varphi_V(t, \sigma(t))$ for all $t \in \R$.
\end{defn}

\begin{rem}
\label{rem:delta_interleaved}
The notion of \emph{$\delta$-interleaved} persistence modules,  presented in \cite{bauer_lesnick, chazal_proximity, chazal_desilva}, is equivalent to the notion of  $(\tau, \sigma)$-interleaved  persistence modules with $\tau(t) = t + \delta = \sigma(t)$. 
\end{rem}

\begin{rem}
\label{rem:isomoprhic_interleaving}
Two persistence modules that are $0$-interleaved are isomorphic as persistence modules.
\end{rem}

Recall that the transition maps of the trivial persistence module are trivial. 
The following definition provides a way of quantifying the similarity between a persistence module $V$ and the trivial persistence module in terms of a translation map.

\begin{defn}
\label{defn:sigma_trivial}
Let $\sigma$ be a translation map. 
A persistence module $(V, \varphi_V)$ is \emph{$\sigma$-trivial} if $\varphi_V(t, \sigma(t)) = 0$ for all $t \in \mathbb{R}$.
\end{defn}

The following proposition provides information about the kernel and cokernel of the interleaving morphisms of two interleaved persistence modules.

\begin{prop}
\label{prop:interleaving_kernel_cokernel}
Let $V$ and $W$ be persistence modules such that 
$(V,W)$ are $(\tau, \sigma)$-interleaved via the morphisms $\phi : V \rightarrow W(\tau)$ and ${\psi : W \rightarrow V(\sigma)}$. 
Then 
\begin{enumerate}[(i)]
\item $\ker \phi$ and $\coker \phi$ are $(\sigma \circ \tau)$-trivial, and
\item $\ker \psi$ and $\coker \psi$ are $(\tau \circ \sigma)$-trivial.
\end{enumerate}
\end{prop}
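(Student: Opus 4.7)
The plan is to read off all four statements directly from the two commutativity constraints in Definition~\ref{defn:general_interleaving}, namely
\[
\psi(\tau)_t \circ \phi_t = \varphi_V[t, (\sigma \circ \tau)(t)] \qquad \text{and} \qquad \phi(\sigma)_t \circ \psi_t = \varphi_W[t, (\tau \circ \sigma)(t)].
\]
The key observation is that each side of these identities gives the transition map of the ambient module between two appropriately shifted indices, while the other side factors through one of $\phi$ or $\psi$. Showing triviality of a kernel then amounts to evaluating such a relation on an element that is killed by one of the maps, and showing triviality of a cokernel amounts to exhibiting the transition map as a map that factors through the relevant image.

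For part (i), I would handle $\ker \phi$ first. The submodule $\ker \phi$ inherits its transition maps from $V$, so I must show $\varphi_V[t, (\sigma \circ \tau)(t)]$ vanishes on $\ker \phi_t$. Pick $v \in \ker \phi_t$; applying the first commutativity identity gives $\varphi_V[t, (\sigma \circ \tau)(t)](v) = \psi(\tau)_t(\phi_t(v)) = \psi(\tau)_t(0) = 0$, which is exactly the claim. For $\coker \phi$, whose vector space at index $t$ is $W_{\tau(t)}/\im \phi_t$ with transition maps induced by $\varphi_W(\tau(\cdot), \tau(\cdot))$, I need to show that the map induced by $\varphi_W[\tau(t), (\tau \circ \sigma \circ \tau)(t)]$ from $\coker \phi_t$ to $\coker \phi_{(\sigma \circ \tau)(t)}$ is zero. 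Substituting $\tau(t)$ for $t$ in the second commutativity identity gives
\[
\varphi_W[\tau(t), (\tau \circ \sigma \circ \tau)(t)] = \phi(\sigma)_{\tau(t)} \circ \psi_{\tau(t)} = \phi_{(\sigma \circ \tau)(t)} \circ \psi_{\tau(t)},
\]
so the image of this transition map lies in $\im \phi_{(\sigma \circ \tau)(t)}$, whence the induced map on cokernels is zero.

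Part (ii) is formally identical with the roles of $\phi$ and $\psi$ (and of $\tau$ and $\sigma$) swapped. For $\ker \psi$ one picks $w \in \ker \psi_t$ and applies the second commutativity identity to conclude $\varphi_W[t, (\tau \circ \sigma)(t)](w) = 0$. For $\coker \psi$ one substitutes $\sigma(t)$ for $t$ in the first commutativity identity to obtain
\[
\varphi_V[\sigma(t), (\sigma \circ \tau \circ \sigma)(t)] = \psi_{(\tau \circ \sigma)(t)} \circ \phi_{\sigma(t)},
\]
which factors through $\im \psi_{(\tau \circ \sigma)(t)}$ and therefore induces the zero map on $\coker \psi$ between indices $t$ and $(\tau \circ \sigma)(t)$.

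The proof is essentially bookkeeping, so I do not anticipate a real obstacle; the only thing to be careful about is the index shifts in the shifted morphisms $\phi(\sigma)$ and $\psi(\tau)$, where one must remember that $\phi(\sigma)_s = \phi_{\sigma(s)}$, so that the identities used above are obtained by evaluating the commutativity constraints at the shifted parameters $\tau(t)$ and $\sigma(t)$ rather than at $t$ itself. Once these substitutions are made, all four assertions fall out immediately from the definitions of $\sigma$-triviality and of kernel/cokernel persistence modules.
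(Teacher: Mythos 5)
Your proposal is correct and follows essentially the same route as the paper: the kernel claims are read off by restricting the relevant commutativity identity to the kernel, and the cokernel claims follow by evaluating the other identity at the shifted index $\tau(t)$ (resp.\ $\sigma(t)$) to see that the transition map of the shifted ambient module factors through $\im\phi$ (resp.\ $\im\psi$), hence induces zero on the quotient. The only cosmetic difference is that you write out part (ii) explicitly where the paper invokes the symmetry of exchanging $\phi$ and $\psi$; your handling of the index shifts $\phi(\sigma)_{\tau(t)}=\phi_{(\sigma\circ\tau)(t)}$ matches the paper's computation.
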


\begin{proof}
(i) The persistence module $\ker \phi$ is $(\sigma \circ \tau)$-trivial if and only if $\varphi_{\ker \phi}(t, \sigma \circ \tau(t)) = 0$ for all $t \in \R$. 
By the commutativity constraint of a $(\tau, \sigma)$-interleaving, we know that $\varphi_{V}(t, \sigma \circ \tau(t)) = \psi(\tau)_{t} \circ \phi_t$. 
Thus, 
\[
\varphi_V(t, \sigma \circ \tau(t)) |_{\ker \phi_t} = \psi(\tau)_{t} \circ \phi_t |_{\ker \phi_t}  = 0.
\]
By the definition of the persistence module $\ker \phi$, we have 
\[
\varphi_{\ker \phi}(t, \sigma \circ \tau(t)) = \varphi_V (t, \sigma \circ \tau(t)) |_{\ker \phi_t} = 0,
\] 
and so we are done.

The persistence module $\coker \phi$ is $(\sigma \circ \tau)$-trivial if and only if  $\varphi_{\coker \phi}(t, \sigma \circ \tau(t)) = 0$  for all $t \in \R$. 
Recall  that the transition maps of the persistence module $\coker \phi$ are defined to be the unique linear maps $\varphi_{\coker \phi}(r, s)$ such that
\[
\varphi_{\coker \phi}(r, s) \circ q_r = q_s \circ \varphi_{W(\tau)}(r, s)
\]
for every $r \leq s \in \R$, where $q_r := (\alpha \mapsto \alpha + \im \phi_r)$ for every $\alpha \in W(\tau)_r$ is the quotient map.
Thus, it suffices to show that $\im \varphi_{W(\tau)}(t, \sigma \circ \tau(t)) \subseteq \im \phi({\sigma \circ \tau)_t}$ for every $t \in \R$.
For  $t \in \R$ we have
\begin{align*}
\varphi_{W(\tau)}(t, \sigma \circ \tau(t)) 
&= \varphi_{W}(\tau(t), \tau \circ \sigma \circ \tau(t)) \\
&= \phi(\sigma)_{\tau(t)} \circ \psi_{\tau(t)} \\
&= \phi_{\sigma \circ \tau(t)} \circ \psi_{\tau(t)},
\end{align*}
where the first equality follows from the definition of the maps $\varphi_{W(\tau)}$, the second equality  follows from the commutativity constraint of the interleaving morphisms $\phi$ and $\psi$, and the last equality  follows from the definition of $\phi(\sigma)$.
Hence, we have shown that
\[
\im \varphi_{W(\tau)}(t, (\sigma \circ \tau)(t)) \subseteq \im \phi_{\sigma \circ \tau(t)}
\]
for every $t \in \R$.

Part (ii) follows from (i) by reversing the roles of $\phi$ and $\psi$, creating a $(\sigma, \tau)$-interleaving of $W$ and $V$; it follows directly that $\ker \psi$ and $\coker \psi$ are $(\tau \circ \sigma)$-trivial.
\end{proof}

We close this section by recalling a result \cite[Proposition 2.2.11]{bubenik_desilva_scott} that allows us to compose  interleavings.

\begin{prop}
\label{prop:interleaving_composition}
Let $(U, \varphi_U), (V, \varphi_{V}),$ and $(W, \varphi_W)$ be persistence modules such that 
$(U,V)$ are $(\tau, \sigma)$-interleaved  and  $(V,W)$ are $(\tau', \sigma')$-interleaved.  Then the persistence modules $(U,V)$ are ${(\tau' \circ \tau, \sigma \circ \sigma')}$-interleaved.
\end{prop}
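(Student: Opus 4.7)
The plan is to first confirm that $(\tau'\circ\tau,\,\sigma\circ\sigma')$ really is a translation pair (reading the statement's conclusion as $(U,W)$-interleaved, since otherwise it is vacuous). Because $\sigma,\tau,\sigma',\tau'$ are each monotone, all four compositions one can form are monotone; it remains to check that $(\sigma\circ\sigma')\circ(\tau'\circ\tau)$ and $(\tau'\circ\tau)\circ(\sigma\circ\sigma')$ are translation maps, i.e.\ sit above the identity. For the first, one composes the two known inequalities $t\le(\sigma\circ\tau)(t)$ and $s\le(\sigma'\circ\tau')(s)$ with $s=\tau(t)$, using monotonicity of $\sigma$ to get $t\le\sigma(\tau(t))\le\sigma(\sigma'(\tau'(\tau(t))))$; the second is symmetric.

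Next I would construct the interleaving morphisms by stacking the given ones. Let $\phi_1\colon U\to V(\tau)$, $\psi_1\colon V\to U(\sigma)$ realize the first interleaving and $\phi_2\colon V\to W(\tau')$, $\psi_2\colon W\to V(\sigma')$ realize the second. Define
\[
\Phi := \phi_2(\tau)\circ\phi_1\colon U\longrightarrow W(\tau'\circ\tau),
\qquad
\Psi := \psi_1(\sigma')\circ\psi_2\colon W\longrightarrow U(\sigma\circ\sigma'),
\]
using the identification $W(\tau')(\tau)=W(\tau'\circ\tau)$ from Definition~\ref{defn:shifted_persistence_module} and analogously for $U$. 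These are persistence module morphisms because $\phi(\sigma)$ is always a morphism whenever $\phi$ is, and composition of morphisms is a morphism.

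For the commutativity constraint, at a chosen $t$ I would expand $\Psi(\tau'\circ\tau)_t\circ\Phi_t$ as the composition
\[
(\psi_1)_{\sigma'\tau'\tau(t)}\circ(\psi_2)_{\tau'\tau(t)}\circ(\phi_2)_{\tau(t)}\circ(\phi_1)_t.
\]
The second interleaving applied at the shifted point $\tau(t)$ collapses the middle two factors to $\varphi_V(\tau(t),\sigma'\tau'\tau(t))$. Naturality of $\psi_1$ (its defining commuting square in Definition~\ref{defn:persistence_module_morphism}) then pushes this $\varphi_V$-map through $\psi_1$, turning the outer $(\psi_1)_{\sigma'\tau'\tau(t)}$ and this middle term into $\varphi_U(\sigma\tau(t),\sigma\sigma'\tau'\tau(t))\circ(\psi_1)_{\tau(t)}$. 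Finally, the first interleaving condition identifies $(\psi_1)_{\tau(t)}\circ(\phi_1)_t$ with $\varphi_U(t,\sigma\tau(t))$, and functoriality of $\varphi_U$ collapses everything to $\varphi_U(t,(\sigma\circ\sigma')\circ(\tau'\circ\tau)(t))$, as required. The other constraint follows by the entirely symmetric argument with the roles of $(\phi,\psi)$ and $(\tau,\sigma)$ swapped.

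The main obstacle is not conceptual but bookkeeping: one must carefully track which index each component map lives at, and use the identification $X(\alpha)(\beta)=X(\alpha\circ\beta)$ at the right moments so that the chain of compositions above actually typechecks. Once the shift indices are correctly tracked, the verification reduces to one application of the second interleaving, one naturality square for $\psi_1$, and one application of the first interleaving, glued by functoriality of $\varphi_U$.
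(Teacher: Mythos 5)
Your proof is correct: the paper does not prove this proposition itself but cites it as Proposition~2.2.11 of Bubenik--de~Silva--Scott, and your argument is exactly the standard composition-of-interleavings proof that reference gives (compose the shifted morphisms via the identification $X(\alpha)(\beta)=X(\alpha\circ\beta)$, then verify the commutativity constraint using one interleaving identity, one naturality square, and the other interleaving identity). You also correctly read the conclusion ``$(U,V)$ are $(\tau'\circ\tau,\sigma\circ\sigma')$-interleaved'' as a typo for $(U,W)$, and your check that $(\tau'\circ\tau,\sigma\circ\sigma')$ is a translation pair, with the order of composition as stated, is right.
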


\subsection{Galois Connections}
\label{subsec:2-preliminaries-GaloisConnections}

In this section we provide a brief review of Galois connections (see \cite{davey:priestley}) and establish some Galois connections that are used in Section~\ref{sec:algebraic_stability}.

\begin{defn}
\label{defn:galoisconnection}
Let $P$ and $Q$ be posets and suppose $f : P \to Q$ and $g : Q \to P$ are monotone functions. 
The pair $(f,g)$ \emph{is a Galois connection} if  for all $x\in P$ and all $y\in Q$
\[ f(x) \leq y \text{ if and only if } x \leq g(y). \]
\end{defn}

\begin{prop}
\label{prop:galoiscomposition}
Suppose $P$, $Q,$ and $R$ are posets and $f : P \to Q$, $g : Q \to P$, $f' : Q \to R$, and $g' : R \to Q$ are monotone functions. Suppose further that $(f,g)$ and $(f', g')$ are Galois connections. Then $(f' \circ f, g \circ g')$ is a Galois connection.
\end{prop}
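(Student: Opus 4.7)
The plan is to verify the definition of a Galois connection directly for the pair $(f' \circ f, g \circ g')$. There are two things to check: that both composite maps are monotone, and that the defining biconditional holds.

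First I would observe that monotonicity is immediate: compositions of monotone functions between posets are monotone, so $f' \circ f : P \to R$ and $g \circ g' : R \to P$ are monotone. This step is routine and will not require any use of the Galois connection property.

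The main content is the biconditional. Fix $x \in P$ and $z \in R$. I would produce a short chain of equivalences: $(f' \circ f)(x) \leq z$ iff $f'(f(x)) \leq z$ iff $f(x) \leq g'(z)$ by the Galois connection $(f', g')$ applied to the element $f(x) \in Q$ and $z \in R$, iff $x \leq g(g'(z))$ by the Galois connection $(f, g)$ applied to $x \in P$ and $g'(z) \in Q$, iff $x \leq (g \circ g')(z)$. Each step is just unfolding a composition or invoking one of the two hypothesized Galois connections, so there is no real obstacle here; the argument is essentially bookkeeping. I do not expect any step to be genuinely difficult — the proof is a one-line chain of ``iff''s once the monotonicity remark is made.
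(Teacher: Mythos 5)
Your proof is correct and is essentially identical to the paper's: the same one-line chain of equivalences $(f'\circ f)(x)\leq z \Leftrightarrow f(x)\leq g'(z) \Leftrightarrow x\leq (g\circ g')(z)$, with the monotonicity of the composites being a routine observation the paper leaves implicit.
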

\begin{proof}
For all $x \in P$, $y \in R$, we have $(f' \circ f) (x) \leq y \Leftrightarrow f(x) \leq g'(y) \Leftrightarrow x \leq (g\circ g')(y)$. 
\end{proof}

We make use of Galois connections whose definition requires the poset $\R_L$  of lower sets of $\R$ (i.e. intervals $\langle -\infty, e\rangle$ for  $e \in \E$) and the poset $\R_U$ of upper sets  of $\R$ (i.e. intervals $\langle e, \infty\rangle$ for  $e \in \E$). 
In both cases, the ordering is given by inclusion. 
Define the order isomorphisms  $\ket{\cdot} : \E \to \R_L$ and $\bra{\cdot} : \E \to \R_U$ as
\[
\ket{e} := \langle -\infty, e\rangle \qquad \text{and} \qquad \bra{e} := \langle e, \infty\rangle.
\]
Moreover, for any set $S \subseteq \R$,  define
\[
{\downarrow}S = \{ x \in \R : \exists y \in S \; \text{s.t.} \; x \leq y \} \in \R_L,
\]
\[
{\uparrow}S = \{ x \in \R : \exists y \in S \; \text{s.t.} \; y \leq x \} \in \R_U.
\]

\begin{defn}  
\label{defn:promotions}
Let $\sigma : \R \to \R$ be a monotone function. We define $\sigma^\downarrow : \E \to \E$, $\sigma^\uparrow : \E \to \E$, and $\sigma^\star : \E \to \E$  by requiring that the following sets are equal: 
\begin{align*}
\ket{\sigma^\downarrow(e)} &= \, {\downarrow}\{ \sigma(x) : x \in \ket{e} \},\\
\bra{\sigma^\uparrow(e)} &= \, {\uparrow}\{ \sigma(x) : x \in \bra{e} \}, \text{ and}\\
\ket{\sigma^\star(e)} &= \sigma^{-1}(\ket{e}) \text{, or, equivalently, } \bra{\sigma^\star(e)} = \sigma^{-1}(\bra{e}).
\end{align*}
for all $e \in \E$. Note that these functions  are defined since $\ket{\cdot}$ and $\bra{\cdot}$ are order isomorphisms and $\sigma$ is monotone.
\end{defn}

\begin{prop}
\label{prop:promotiondistribution}
Let $\sigma, \tau : \R \to \R$ be monotone functions. Then $(\sigma \circ \tau)^{\uparrow} = \sigma^\uparrow \circ \tau^{\uparrow}$, $(\sigma \circ \tau)^{\downarrow} = \sigma^\downarrow \circ \tau^{\downarrow}$, and $(\sigma \circ \tau)^{\star} = \tau^{\star} \circ \sigma^{\star}$.
\end{prop}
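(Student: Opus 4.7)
The plan is to unfold each of the three identities using the definitions in Definition~\ref{defn:promotions}, reduce them to set-theoretic identities of upper/lower subsets of $\R$, and then exploit the fact that $\ket{\cdot}$ and $\bra{\cdot}$ are order isomorphisms to conclude equality of the promoted functions.

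First I would dispatch the $\star$ identity, since it follows from nothing more than $(\sigma\circ\tau)^{-1}=\tau^{-1}\circ\sigma^{-1}$. By the defining property of $(-)^\star$,
\[
\ket{(\sigma\circ\tau)^\star(e)} \;=\; (\sigma\circ\tau)^{-1}(\ket{e}) \;=\; \tau^{-1}\!\left(\sigma^{-1}(\ket{e})\right) \;=\; \tau^{-1}\!\left(\ket{\sigma^\star(e)}\right) \;=\; \ket{\tau^\star(\sigma^\star(e))},
\]
so since $\ket{\cdot}$ is an order isomorphism we get $(\sigma\circ\tau)^\star = \tau^\star\circ\sigma^\star$.

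The main work is for the $\uparrow$ identity; the $\downarrow$ identity is entirely analogous by order-duality. The key auxiliary observation is the following: for any monotone $\sigma:\R\to\R$ and any $S\subseteq \R$,
\[
{\uparrow}\sigma({\uparrow}S) \;=\; {\uparrow}\sigma(S).
\]
The inclusion $\supseteq$ is immediate since $S\subseteq{\uparrow}S$. For $\subseteq$, if $z\geq\sigma(y)$ for some $y\in{\uparrow}S$, pick $x\in S$ with $x\leq y$; monotonicity of $\sigma$ gives $\sigma(x)\leq\sigma(y)\leq z$, so $z\in{\uparrow}\sigma(S)$. Applying this auxiliary fact with $S=\{\tau(x):x\in\bra{e}\}$,
\[
\bra{\sigma^\uparrow(\tau^\uparrow(e))} \;=\; {\uparrow}\{\sigma(y):y\in\bra{\tau^\uparrow(e)}\} \;=\; {\uparrow}\sigma\!\left({\uparrow}\{\tau(x):x\in\bra{e}\}\right) \;=\; {\uparrow}\{\sigma(\tau(x)):x\in\bra{e}\} \;=\; \bra{(\sigma\circ\tau)^\uparrow(e)},
\]
and since $\bra{\cdot}$ is an order isomorphism this yields $(\sigma\circ\tau)^\uparrow = \sigma^\uparrow\circ\tau^\uparrow$. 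The $\downarrow$ identity is proved in exactly the same way, with ${\uparrow}$ replaced by ${\downarrow}$ and $\bra{\cdot}$ replaced by $\ket{\cdot}$.

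I do not expect any serious obstacle: the only subtlety is keeping the composition order straight, which reverses for the preimage-style $\star$ operation but not for the forward-image operations $\uparrow$ and $\downarrow$. Once the defining formulas from Definition~\ref{defn:promotions} are unpacked, the argument is a routine manipulation that rests entirely on the monotonicity of $\sigma$ and $\tau$ and on $\ket{\cdot}$, $\bra{\cdot}$ being order isomorphisms.
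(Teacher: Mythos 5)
Your proposal is correct and follows essentially the same route as the paper: reduce each identity to a set-theoretic equality of upper/lower sets (or preimages) applied to $S=\bra{e}$ or $S=\ket{e}$ and conclude via the order isomorphisms $\bra{\cdot}$, $\ket{\cdot}$. The only difference is that you spell out, via the auxiliary fact ${\uparrow}\sigma({\uparrow}S)={\uparrow}\sigma(S)$, the step the paper dismisses as ``easy to verify.''
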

\begin{proof}
It is easy to verify that ${\uparrow} (\sigma \circ \tau)(S)) = {\uparrow} \sigma ( {\uparrow}\tau (S))$, ${\downarrow} (\sigma \circ \tau)(S)) = {\downarrow} \sigma ( {\downarrow}\tau (S))$, and $(\sigma \circ \tau)^{-1}(S) = \tau^{-1}(\sigma^{-1}(S))$ for any $S \subseteq \R$. Now the result follows from Definition~\ref{defn:promotions} and the above equalities  applied to  $S = \bra{e}$ or $S = \ket{e}$ for $e \in \E$.
\end{proof}

\begin{prop}
\label{prop:endpointgaloisconnections}
Let $\sigma : \R \to \R$ be a monotone function. Then both $(\sigma^\downarrow, \sigma^\star)$ and $(\sigma^\star, \sigma^\uparrow)$ are Galois connections. 
\end{prop}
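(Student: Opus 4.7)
The plan is to reduce each of the two claimed Galois connections to the familiar adjunction between image and preimage of a function, namely
\[
\sigma(S) \subseteq T \quad \Longleftrightarrow \quad S \subseteq \sigma^{-1}(T),
\]
for any subsets $S,T \subseteq \R$. The translation from $\E$ to subsets of $\R$ is carried out through the order isomorphisms $\ket{\cdot}$ and $\bra{\cdot}$ introduced just before the statement.

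As a preliminary step, I would note that $\sigma^\downarrow$, $\sigma^\uparrow$, and $\sigma^\star$ are all monotone as functions $\E \to \E$: this is immediate from the monotonicity of the set operators $\sigma(\cdot)$, $\sigma^{-1}(\cdot)$, ${\downarrow}(\cdot)$, and ${\uparrow}(\cdot)$ together with the fact that $\ket{\cdot}$ and $\bra{\cdot}$ are order isomorphisms. Next, for the connection $(\sigma^\downarrow, \sigma^\star)$, I would fix $e_1, e_2 \in \E$ and compute, using Definition~\ref{defn:promotions} and the order isomorphism $\ket{\cdot}$,
\[
\sigma^\downarrow(e_1) \leq e_2 \iff \ket{\sigma^\downarrow(e_1)} \subseteq \ket{e_2} \iff {\downarrow}\{\sigma(x) : x \in \ket{e_1}\} \subseteq \ket{e_2}.
\]
Because $\ket{e_2}$ is already a lower set, this is equivalent to $\sigma(\ket{e_1}) \subseteq \ket{e_2}$. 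Applying the image-preimage adjunction yields $\ket{e_1} \subseteq \sigma^{-1}(\ket{e_2}) = \ket{\sigma^\star(e_2)}$, which translates back to $e_1 \leq \sigma^\star(e_2)$.

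The second connection $(\sigma^\star, \sigma^\uparrow)$ is handled by a dual argument, using $\bra{\cdot}$ in place of $\ket{\cdot}$ and ${\uparrow}$ in place of ${\downarrow}$. Unwinding $\sigma^\star(e_1) \leq e_2$ via $\bra{\cdot}$ gives a containment between $\sigma^{-1}(\bra{e_1})$ and $\bra{e_2}$, to which the image-preimage adjunction applies; because $\bra{e_1}$ is itself an upper set, the ${\uparrow}$-closure can be freely inserted, producing the containment that corresponds to $e_1 \leq \sigma^\uparrow(e_2)$.

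I do not anticipate any genuine obstacle here. The entire argument is a bookkeeping exercise: the hardest part is just keeping straight which of $\ket{\cdot}$ and $\bra{\cdot}$ to use for each connection, and checking that the closures ${\downarrow}$ and ${\uparrow}$ can be absorbed into the ambient lower or upper set at the right moment. The structural content of the result is that $\sigma^\star$ is simultaneously the upper adjoint of $\sigma^\downarrow$ and the lower adjoint of $\sigma^\uparrow$, reflecting exactly the two roles of $\sigma^{-1}$ in the image-preimage adjunction.
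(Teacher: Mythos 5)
Your proposal is correct and takes essentially the same route as the paper: translate the inequalities through the order isomorphisms $\ket{\cdot}$ and $\bra{\cdot}$ and reduce everything to the image--preimage relationship $\sigma(S)\subseteq T \Leftrightarrow S \subseteq \sigma^{-1}(T)$. The only difference is cosmetic: the paper proves the two directions of that adjunction by hand, using the containments $\ket{x}\subseteq \sigma^{-1}(\sigma(\ket{x}))$ and $\sigma(\sigma^{-1}(\ket{y}))\subseteq \ket{y}$, whereas you invoke it directly and absorb the ${\downarrow}$ and ${\uparrow}$ closures into the ambient lower or upper set.
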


\begin{proof}

We show $(\sigma^\downarrow, \sigma^\star)$ is a Galois connection. Suppose first that $\sigma^{\downarrow}(x) \leq y$ for some $x,y \in \E$. We show $x \leq \sigma^\star(y)$. Since $\ket{\cdot}$ is an order isomorphism, $\sigma^{\downarrow}(x) \leq y$ is equivalent to $\ket{\sigma^{\downarrow}(x)} \subseteq \ket{y}$. By the definition of $\sigma^{\downarrow}$, this is equivalent to ${\downarrow}\sigma(\ket{x}) \subseteq \ket{y}$. Taking the preimage of both sides yields $\sigma^{-1}({\downarrow}\sigma(\ket{x})) \subseteq \sigma^{-1}(\ket{y})$. Since $\ket{x} \subseteq \sigma^{-1}(\sigma(\ket{x})) \subseteq \sigma^{-1}({\downarrow}\sigma(\ket{x}))$, we obtain $\ket{x} \subseteq \sigma^{-1}(\ket{y})$. Since $\ket{\cdot}$ is an order isomorphism, we conclude that $x \leq \sigma^\star(y)$. 

We now prove the converse. That is, we suppose that $x \leq \sigma^\star(y)$ and show $\sigma^{\downarrow}(x) \leq y$. From $x \leq \sigma^\star(y)$, we obtain $\ket{x} \subseteq \sigma^{-1}(\ket{y})$. Applying $\sigma$ to both sides and taking the downward closure gives $ {\downarrow} \sigma(\ket{x}) \subseteq {\downarrow} \sigma(\sigma^{-1}(\ket{y}))$. See that ${\downarrow} \sigma(\sigma^{-1}(\ket{y})) = \ket{y}$, hence $ {\downarrow} \sigma(\ket{x}) \subseteq \ket{y}$, or equivalently, $\sigma^{\downarrow}(x) \leq y$, as desired. Hence, the pair $(\sigma^\downarrow, \sigma^\star)$ is a Galois connection. To show the pair $(\sigma^\star, \sigma^\uparrow)$ is a Galois connection, one proceeds similarly.
\end{proof}

The following maps are used to move between  points in $\overline{\R}$ and  decorated points in $\E$.

\begin{defn}
\label{def::EtoRmaps}
 The maps $\pi \colon \E \to \overline{\R}$, $i^- : \overline{\R} \to \E$ and $i^+ : \overline{\R} \to \E$ are defined by:
\[
{\pi(t^ \pm) = t} \text{ , } i^\pm(t) = t^\pm, 
\]
for $ t \in \R$, and
\[
\pi(\pm\infty) = \pm\infty \text{ , } i^\pm(\pm\infty) = \pm\infty.
\]
\end{defn}

\begin{defn}
\label{defn:demotions}
Let $f : \E \to \E$ be a monotone function.
Define functions  $f_+ \colon \overline{\R} \to \overline{\R}$  and $f_- \colon \overline{\R} \to \overline{\R}$ via
\[
f_+ := \pi \circ f \circ i^+\text{ and } f_- := \pi \circ f \circ i^- . 
\]
\end{defn}

We close this section by establishing some Galois connections that will be needed later.
\begin{prop}
\label{prop:galoisinclusionprojection}
Both $(i^-, \pi)$ and $(\pi, i^+)$ are Galois connections.
\end{prop}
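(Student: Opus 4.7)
The plan is to verify the defining biconditional of a Galois connection directly in each case, by a short case analysis on the decorated endpoints. The monotonicity of $i^-$, $i^+$, and $\pi$ is immediate: on the real part of $\overline{\R}$ the lexicographic order on $\E$ (with $-<+$) agrees with the order on $\R$ through either $i^-$, $i^+$, or $\pi$, and the infinities are sent to infinities.

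For the pair $(i^-, \pi)$ I would prove that for every $x \in \overline{\R}$ and $y \in \E$,
\[
i^-(x) \leq y \iff x \leq \pi(y).
\]
I would split on $y$. If $y = \pm\infty$, both sides reduce to the same statement about $x$ and $\pm\infty$. If $y = t^-$ for some $t \in \R$, then for $x \in \R$ the inequality $x^- \leq t^-$ is, by the lexicographic order, equivalent to $x \leq t = \pi(y)$, and the infinite cases for $x$ are immediate. If $y = t^+$, then $x^- \leq t^+$ is equivalent to $x < t$ or $x = t$ (since $- < +$), i.e.\ again to $x \leq t = \pi(y)$. This exhausts all cases.

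For $(\pi, i^+)$ the plan is analogous: verify
\[
\pi(x) \leq y \iff x \leq i^+(y)
\]
by splitting on $x \in \E$. If $x = t^\pm$ and $y \in \R$, then $t^\pm \leq y^+$ is equivalent to $t \leq y$ (the $\pm$ sign is harmless because $y$ carries the maximal decoration $+$), which is $\pi(x) \leq y$. The cases where $x$ or $y$ is $\pm\infty$ are again immediate from the definitions of $\pi$ and $i^+$.

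There is no real obstacle; the only point requiring care is keeping track of the lexicographic ordering $-<+$ when the real parts of the two decorated points coincide. In particular, one should check that using $i^-$ as the left adjoint and $i^+$ as the right adjoint (rather than the other way around) is exactly what is forced by the asymmetry $t^- < t^+$: the left adjoint picks the smallest decoration and the right adjoint picks the largest, which is what makes each biconditional tight on the boundary case $\pi(x) = y$.
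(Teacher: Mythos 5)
Your proposal is correct, but it takes a different route from the paper. The paper never does a case analysis on the biconditional itself: it first records the composite relations $\pi \circ i^+ = \mathrm{id}$ and $\mathrm{id} \leq i^+ \circ \pi$ (respectively $\pi \circ i^- = \mathrm{id}$ and $i^- \circ \pi \leq \mathrm{id}$), and then obtains $\pi(x) \leq y \Leftrightarrow x \leq i^+(y)$ (and the analogue for $(i^-,\pi)$) by a circle of four implications using only monotonicity and these relations — essentially the standard unit--counit characterization of a Galois connection. You instead verify the adjunction inequality elementwise, splitting on the decorations $t^-$, $t^+$, $\pm\infty$, which amounts to pushing the concrete check of the order on $\E$ into the biconditional directly rather than into the two ``easily verified'' composite relations. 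Both arguments are complete and about equally short; the paper's version is more reusable (any monotone pair satisfying those two composite relations is a Galois connection, so the same template applies elsewhere), while yours is more self-contained and makes explicit why $i^-$ must be the left adjoint and $i^+$ the right one — the asymmetry $t^- < t^+$ that the paper encodes only implicitly in which of the inequalities $\mathrm{id} \leq i^+ \circ \pi$ versus $i^- \circ \pi \leq \mathrm{id}$ holds. If you write your version out in full, the only care needed is exactly where you placed it: the boundary cases where the real parts coincide, which you handle correctly.
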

\begin{proof}
First we show that $(\pi, i^+)$ is a Galois connection.
Using the easily-verified relations $\pi \circ i^+ = \text{id}$ and $\text{id} \leq i^+ \circ \pi$, we have, for all $x \in \E$ and $y \in \overline{\R}$, the circle of implications $(\pi(x) \leq y) \Rightarrow ((i^+ \circ \pi) (x) \leq i^+ (y)) \Rightarrow ( x \leq i^+(y) ) \Rightarrow (\pi(x) \leq (\pi \circ i^+)(y)) \Rightarrow (\pi(x) \leq y)$. Thus, $(\pi(x) \leq y) \Leftrightarrow (x \leq i^+(y))$. That is, the pair $(\pi, i^+)$ is a Galois connection.

Showing that the pair $(i^-, \pi)$ is a Galois connection proceeds similarly. Using the easily-verified relations $\pi \circ i^- = \text{id}$ and $i^- \circ \pi \leq \text{id}$, we have,  for all $x \in \overline{\R}$ and $y \in \E$, the circle of implications $(x \leq \pi(y)) \Rightarrow (i^- (x) \leq (i^- \circ \pi) (y) ) \Rightarrow (i^-(x) \leq y) \Rightarrow ((\pi \circ i^-)(x) \leq \pi(y)) \Rightarrow ( x \leq \pi(y))$.
Thus, $(x \leq \pi(y)) \Leftrightarrow (i^-(x) \leq y)$. That is, the pair $(i^-, \pi)$ is a Galois connection.
\end{proof}

\begin{prop}
\label{prop:demotionconnection}
Suppose $f, g : \E \to \E$ are monotone functions such that the pair $(f,g)$ is a Galois connection. Then the pair $(f_-, g_+)$ is a Galois connection.
\end{prop}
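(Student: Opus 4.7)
The plan is to recognize that $f_- = \pi \circ f \circ i^-$ and $g_+ = \pi \circ g \circ i^+$ are built from three Galois connections already available to us, and then to obtain the result by two applications of Proposition~\ref{prop:galoiscomposition}. Specifically, Proposition~\ref{prop:galoisinclusionprojection} supplies the Galois connections $(i^-, \pi)\colon \overline{\R} \rightleftarrows \E$ and $(\pi, i^+)\colon \E \rightleftarrows \overline{\R}$, and $(f,g)\colon \E \rightleftarrows \E$ is Galois by hypothesis.

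First I would observe that $f_-$ and $g_+$ are monotone, as they are composites of monotone maps. Next, applying Proposition~\ref{prop:galoiscomposition} to the Galois connections $(i^-, \pi)$ and $(f, g)$ yields that
\[
(f \circ i^-,\; \pi \circ g)\colon \overline{\R} \rightleftarrows \E
\]
is a Galois connection. Then applying Proposition~\ref{prop:galoiscomposition} again to this composite Galois connection together with $(\pi, i^+)$ gives
\[
(\pi \circ f \circ i^-,\; \pi \circ g \circ i^+)\colon \overline{\R} \rightleftarrows \overline{\R},
\]
which is precisely $(f_-, g_+)$ by Definition~\ref{defn:demotions}. This finishes the proof.

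I do not anticipate any real obstacle here; the result is a bookkeeping exercise in composing Galois connections, and the two halves of Proposition~\ref{prop:galoisinclusionprojection} are exactly the ``adapters'' needed to sandwich $(f,g)$ between $\overline{\R}$ and itself. If one preferred a direct verification instead of invoking Proposition~\ref{prop:galoiscomposition}, one could unwind the definitions and use the relations $\pi \circ i^\pm = \mathrm{id}_{\overline{\R}}$, $i^-\circ\pi \leq \mathrm{id}_\E$, and $\mathrm{id}_\E \leq i^+\circ\pi$ to chain the equivalences $f_-(x) \leq y \Leftrightarrow f(i^-(x)) \leq i^+(y) \Leftrightarrow i^-(x) \leq g(i^+(y)) \Leftrightarrow x \leq g_+(y)$ for $x, y \in \overline{\R}$, but the compositional argument is cleaner.
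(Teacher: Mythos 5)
Your proof is correct and follows essentially the same route as the paper, which likewise writes $(f_-, g_+) = (\pi \circ f \circ i^-, \pi \circ g \circ i^+)$ and invokes Proposition~\ref{prop:galoiscomposition} together with Proposition~\ref{prop:galoisinclusionprojection}; you have simply made the two composition steps explicit. Nothing further is needed.
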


\begin{proof}
By Definition~\ref{defn:demotions}, we have $(f_-, g_+) = (\pi \circ f \circ i^-,  \pi \circ g \circ i^+)$.
The result  follows from Proposition~\ref{prop:galoiscomposition} and Proposition~\ref{prop:galoisinclusionprojection}. 
\end{proof}

\subsection{Induced Matchings on Persistence Diagrams}
\label{subsec:induced_matchings}

In this section, we summarize the work of  Bauer and Lesnick~\cite{bauer_lesnick, bauer_lesnick_categorified} on matchings of persistence diagrams of PDF persistence modules $V$ and $W$ induced by a morphism  $\phi : V \rightarrow W$. 

\begin{defn}
\label{defn:matching}
Let $\X$ be a relation between sets $S$ and $T$ (i.e. $\X \subseteq S \times T$). We say that $\X$ is a \emph{matching} $\X : S \matching T$  if $\X$ is the graph of an injective function $\X' : S' \to T'$, where $S' \subseteq S$ and $T' \subseteq T$. We define the domain and image of a matching via $\dom \X := \dom \X'$ and $\im \X := \im \X'$, and we will use the notation $\X(s) = t$ to denote $(s,t) \in \X$. 
\end{defn}

We use the following notation to define matchings induced by morphisms.

\begin{defn}
\label{defn:barcode_orderings}
Let $(V,\varphi_V)$ be a PDF persistence module. For  $b, d \in \mathbb{E}$, we define two subsets of the persistence diagram $\pdbf(V)$ by:
\[
\pdbf_b(V) := \setof{ [b, d',i] \colon [b, d',i] \in \pdbf(V) },
\]
\[
\pdbf^d(V) := \setof{ [b', d,i] \colon [b', d,i] \in \pdbf(V) }.
\]
\end{defn}

If $V$  is a PFD persistence module, then the sets $\pdbf_b(V)$ and $\pdbf^d(V)$ are countable for every $b,d \in \E$. The left-handed (right-handed) ordering on $\pdbf(V)$ induces a total ordering on $\pdbf_b(V)$ ($\pdbf^d$(V)). We will always consider these sets together with these induced orderings. Therefore, if we talk about the first $n$ points in $\pdbf_b(V)$ or $\pdbf^d(V)$, we mean  the $n$ smallest points with respect to the induced ordering. The following proposition allows us to define matchings between the  PFD persistence modules as introduced in~\cite{bauer_lesnick}.

\begin{prop}[Theorem 4.2~\cite{bauer_lesnick}]
Let $V$ and $W$ be PFD persistence modules, and let the symbol $|\cdot|$ denote the cardinality of a set.
\begin{enumerate}[(i)]
\item If there exists a monomorphism  $V   \hookrightarrow W$, then  $|\pdbf^d(V)| \leq |\pdbf^d(W)|$ for $d\in \E$.
\item If  there exists an epimorphism $V  \twoheadrightarrow W$, then  $|\pdbf_b(W)| \leq |\pdbf_b(V)|$ for $b\in \E$.
\end{enumerate}
\end{prop}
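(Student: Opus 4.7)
My plan is to realize $|\pdbf^d(V)|$ as the dimension of a functorial subquotient $V^d$ of $V$, then show that a monomorphism induces an injection $V^d \hookrightarrow W^d$; part (ii) will follow from a dual construction. Throughout I will invoke Theorem~\ref{thm:interval_decomp} to write $V = \bigoplus_i I_{J_i}$ and $W = \bigoplus_j I_{K_j}$, and compute the proposed invariants summand by summand.

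For the easy case $d = t^+$, I define $V^{t^+} := \bigcap_{s > t} \ker \varphi_V(t, s) \subseteq V_t$. Since each summand $I_{J_i}$ contributes $\sk$ to $V_t$ iff $t \in J_i$, and the transition maps are identities or zero on each summand, a direct calculation gives a basis of $V^{t^+}$ indexed by $\{i : \cD(J_i) = t^+\}$, so $\dim V^{t^+} = |\pdbf^{t^+}(V)|$. The commutativity $\varphi_W(t,s)\circ\phi_t = \phi_s\circ\varphi_V(t,s)$ forces $\phi_t(V^{t^+}) \subseteq W^{t^+}$, and injectivity of $\phi_t$ passes to the restriction. The case $d = \infty$ is analogous, with $V^\infty := \lim_{t\to\infty} V_t$ (inverse limit), whose dimension counts intervals with $\cD(J_i) = \infty$; left-exactness of $\lim$ in vector spaces preserves the injection.

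The subtle case is $d = t^-$. Let $V_{<t} := \mathrm{colim}_{s<t} V_s$, with canonical map $\kappa_V : V_{<t} \to V_t$. Its kernel has a natural basis indexed by intervals of $V$ entirely contained in $(-\infty, t)$, which overcounts: it contains those dying strictly before $t^-$ as well as those dying at $t^-$. To isolate the latter I quotient $\ker \kappa_V$ by $K_V := \bigcup_{s<t} \im\bigl(\ker \kappa_V^{(s)} \to \ker \kappa_V\bigr)$, where $\kappa_V^{(s)} : V_{<s} \to V_s$ is the analogous map, and set $V^{t^-} := \ker \kappa_V / K_V$. A basis chase shows that the surviving classes are exactly those indexed by $\{i : \cD(J_i) = t^-\}$, so $\dim V^{t^-} = |\pdbf^{t^-}(V)|$. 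Because filtered colimits are exact in the category of vector spaces, $\phi$ induces an injection $V_{<t} \hookrightarrow W_{<t}$ compatible with the maps $\kappa$; it restricts to an injection $\ker \kappa_V \hookrightarrow \ker \kappa_W$ sending $K_V$ into $K_W$, hence descends to an injection $V^{t^-} \hookrightarrow W^{t^-}$.

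Part (ii) follows by the dual construction: for each $b$, define $V_b$ as a cokernel-style quotient, e.g., $V_{t^-} := V_t / \im(V_{<t} \to V_t)$, with $\dim V_b = |\pdbf_b(V)|$. An epimorphism $\phi$ then induces a surjection $V_b \twoheadrightarrow W_b$ by the right-exactness of cokernels, together with a dual colimit argument for the $b = t^+$ case. The main obstacle is the diagram chase for the left-limit cases $d = t^-$ and $b = t^+$: each construction involves a colimit followed by a subquotient, and one must verify that injections (resp.\ surjections) are preserved at every step. Conceptually this is routine because vector spaces are well-behaved, but the bookkeeping over the indexing set $\{s < t\}$ deserves some care.
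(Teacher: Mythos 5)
The paper itself does not prove this proposition (it is quoted from Bauer--Lesnick), so I can only judge your argument on its own terms. Part (i) is essentially workable, but two steps are off as written. For $d=\infty$ the relevant object is the direct limit $\varinjlim_{t}V_t$ of the directed system (exact, commutes with direct sums, and counts intervals unbounded above); an ``inverse limit as $t\to\infty$'' is not the limit of this diagram, and left-exactness is the wrong fact to invoke. For $d=t^-$ your description of $\ker\kappa_V$ is mistaken: in the filtered colimit $V_{<t}$ any class carried by an interval with $\cD(J)<t^-$ is already zero (its germ vanishes at levels past the interval), so $\ker\kappa_V$ has dimension exactly $|\pdbf^{t^-}(V)|$ and your $K_V$ is the zero subspace. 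The extra quotient is therefore harmless, but it hides a logical hole: an injection does not in general descend to an injection on quotients $\ker\kappa_V/K_V\to\ker\kappa_W/K_W$ unless $\phi^{-1}(K_W)\cap\ker\kappa_V\subseteq K_V$, which you never verify; it happens to hold only because $K_V=K_W=0$. Dropping $K_V$ and using the colimit at $\infty$ repairs (i).

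Part (ii) has a genuine gap. You only construct the invariant for $b=t^-$ (that case is fine: $V_t/\im(V_{<t}\to V_t)$ plus right-exactness gives the surjection). For $b=t^+$ and $b=-\infty$ you defer to ``a dual colimit argument,'' but the dual of your death-construction is a limit over $s\downarrow t$ (or $t\to-\infty$), and limits of vector spaces are neither right exact --- so an epimorphism of persistence modules need not induce a surjection on them, which is exactly what (ii) requires --- nor do they commute with infinite direct sums, so the candidate space need not compute the count: for the intervals $(t,t+1/i)$, $i\in\N$, the limit over $s>t$ is a product of lines (uncountable dimension) while $|\pdbf_{t^+}(V)|$ is countably infinite. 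So the sketched dual does not establish (ii) for left-open births or births at $-\infty$. A clean fix is to reduce (ii) to (i) by pointwise duality, $V^{\ast}_t:=(V_{-t})^{\ast}$, which preserves PFD, turns epimorphisms into monomorphisms, and sends a birth at $t^{+}$ to a death at $(-t)^{-}$; alternatively, argue directly as in Bauer--Lesnick.
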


The next two propositions establish the matchings induced by monomorphisms and epimorphisms.  
The proof of parts (i)-(iii) of each proposition is a simple consequence of the previous proposition, while (iv) follows from \cite[Theorem 4.2]{bauer_lesnick}.

\begin{prop}
\label{prop:match_i}
Let $V, W$ be PFD persistence modules.
If there exists a monomorphism from $V$ to $W$, then there exists a unique matching $\cX_{i (V,W)} \colon \pdbf(V)  \matching \pdbf(W) $ which satisfies: 
\begin{enumerate}[(i)]
\item the domain of $\cX_{i(V,W)}$ is $\pdbf(V)$,
\item $\cX_{i(V,W)}$ preserves the right-handed ordering,
\item $\cX_{i(V,W)}$ maps the points in $\pdbf^d(V)$ to  the smallest $|\pdbf^d(V)|$  points in $\pdbf^d(W)$, 
\item if $\cX_{i(V,W)}([b,d,i]) = [b',d',i']$, then $d=d'$ and $b'\leq b$.
\end{enumerate}
\end{prop}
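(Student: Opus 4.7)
The plan is to construct the matching $\cX_{i(V,W)}$ fiber by fiber over $\E$ (viewed as the set of possible death coordinates) and then verify the four listed properties together with uniqueness. By the previous proposition, $|\pdbf^d(V)| \leq |\pdbf^d(W)|$ for every $d \in \E$. Fix such a $d$: on $\pdbf^d(V)$ and on $\pdbf^d(W)$ the death coordinate is constant, so the right-handed ordering restricts to the lexicographic ordering on $(b,i)$. There is therefore a unique order-preserving bijection from $\pdbf^d(V)$ onto the first $|\pdbf^d(V)|$ elements of $\pdbf^d(W)$. Since the fibers $\pdbf^d(V)$ are disjoint and cover $\pdbf(V)$, the union of these fiberwise bijections defines a relation $\cX_{i(V,W)} \subseteq \pdbf(V) \times \pdbf(W)$ which is immediately a matching in the sense of Definition~\ref{defn:matching}.

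With the construction in hand, properties (i) and (iii) are built in: every $[b,d,i] \in \pdbf(V)$ lies in $\pdbf^d(V)$ and so is in the domain, and by construction the image of $\pdbf^d(V)$ is exactly the first $|\pdbf^d(V)|$ elements of $\pdbf^d(W)$. For (ii), recall that the right-handed ordering is lexicographic in $(d,b,i)$; the matching preserves $d$, preserves the order within each fiber, and matched points from distinct fibers are already separated by their $d$-coordinate, so the global order is preserved. Uniqueness of a matching satisfying (i)--(iii) then follows: (i) together with (iii) fix the domain and the image of the restriction to each fiber, and (ii) forces that restriction to be the unique order-preserving bijection between those two ordered sets.

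The only step that is not bookkeeping is (iv). The equality $d = d'$ is again immediate from the construction, but the inequality $b' \leq b$ needs genuine input; I would invoke \cite[Theorem 4.2]{bauer_lesnick} directly. The underlying content is the sharper counting inequality
\[
\bigl|\{[b'',d,j] \in \pdbf^d(W) : b'' \leq b\}\bigr| \geq \bigl|\{[b'',d,j] \in \pdbf^d(V) : b'' \leq b\}\bigr|
\]
valid for every $b, d \in \E$ whenever $V \hookrightarrow W$ is a monomorphism. Feeding this, uniformly in $b$, into the fiberwise order-preserving matching constructed above forces the $k$th matched point of $\pdbf^d(W)$ to have birth no larger than that of the $k$th point of $\pdbf^d(V)$, yielding $b' \leq b$.

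The main obstacle in a self-contained development would be (iv), but since the paper explicitly cites \cite[Theorem 4.2]{bauer_lesnick} for this, the remaining task reduces to verifying the ordering and uniqueness claims that single out exactly which matching is being produced.
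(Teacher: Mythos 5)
Your proposal is correct and follows essentially the same route as the paper, which disposes of (i)--(iii) and uniqueness as a direct consequence of the preceding cardinality proposition (via the fiberwise order-preserving assignment onto the smallest $|\pdbf^d(V)|$ points of $\pdbf^d(W)$) and attributes (iv) to \cite[Theorem 4.2]{bauer_lesnick}, exactly as you do. Your fiber-by-fiber construction and the counting-inequality gloss on why $b' \leq b$ holds simply make explicit what the paper leaves as a one-line citation.
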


\begin{prop}
\label{prop:match_p}
Let $V, W$ be PFD persistence modules.
If there exists an epiomorphism  from $V$ to  $W$, then there exist a unique matching $\cX_{s(V,W)} \colon \pdbf(V)  \matching \pdbf(W) $ that satisfies:  
\begin{enumerate}[(i)]
\item the image of $\cX_{s(V,W)}$ is $\pdbf(W)$,
\item the inverse relation  $\cX^{-1}_{s(V,W)}$ preserves the left-handed ordering, 
\item $\cX^{-1}_{s(V,W)}$ maps the points in $\pdbf_b(W)$ to the smallest  $|\pdbf_b(W)|$  points in $\pdbf_b(V)$, 
\item if $\cX_{s(V,W)}([b,d,i]) = [b',d',i']$ then $b=b'$ and $d'\leq d$.
\end{enumerate}
\end{prop}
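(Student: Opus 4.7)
The plan is to mirror the structure of Proposition~\ref{prop:match_i}: construct $\cX_{s(V,W)}$ explicitly from the interval decompositions of $V$ and $W$ afforded by Theorem~\ref{thm:interval_decomp}, using the cardinality bound of the preceding proposition to make the construction well-defined, then verify (i)--(iii) and the birth-preservation half of (iv) by direct inspection and invoke \cite[Theorem 4.2]{bauer_lesnick} for the death inequality in (iv).

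First I would settle uniqueness. Since (iv) forces $b=b'$, any matching satisfying the listed properties decomposes as a disjoint union of matchings $\cX_b \colon \pdbf_b(V) \matching \pdbf_b(W)$ indexed by $b \in \E$. Property (i) forces $\im \cX_b = \pdbf_b(W)$. Properties (ii) and (iii) then pin $\cX_b$ down element-by-element: listing $\pdbf_b(W)$ in left-handed order as $q_1 \prec q_2 \prec \cdots$ and the smallest $|\pdbf_b(W)|$ elements of $\pdbf_b(V)$ in left-handed order as $p_1 \prec p_2 \prec \cdots$, the order-preserving condition (ii) forces $\cX_b^{-1}(q_j) = p_j$. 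Hence at most one matching satisfies (i)--(iii).

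Next I would construct this matching. Applying the unnumbered proposition preceding Proposition~\ref{prop:match_i} to the given epimorphism $V \twoheadrightarrow W$ yields $|\pdbf_b(W)| \leq |\pdbf_b(V)|$ for every $b \in \E$, so the recipe above makes sense: the smallest $|\pdbf_b(W)|$ elements of $\pdbf_b(V)$ always exist. Define $\cX_{s(V,W)}$ on each fiber $\pdbf_b(V)$ by this pairing and take the union over $b \in \E$. Properties (i), (ii), (iii), and the $b=b'$ half of (iv) are immediate from the construction.

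The main obstacle is the inequality $d' \leq d$ in (iv), which cannot be extracted from cardinality alone. For each fixed $b$, it asserts that when the multisets $\{d : [b,d,i] \in \pdbf(V)\}$ and $\{d' : [b,d',i] \in \pdbf(W)\}$ are listed in decreasing order, the $j$-th term of the latter is at most the $j$-th term of the former, for every $j$. This is precisely the refinement of the counting statement supplied by \cite[Theorem 4.2]{bauer_lesnick}; conceptually it expresses that an epimorphism of PFD persistence modules can truncate bars but cannot lengthen them. Quoting that theorem closes the proof.
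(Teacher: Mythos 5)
Your proposal is correct and follows essentially the same route as the paper: parts (i)--(iii) are obtained from the cardinality bound of the preceding proposition via the fiberwise (fixed-birth) construction, and the death inequality in (iv) is delegated to \cite[Theorem 4.2]{bauer_lesnick}. The explicit uniqueness argument you give is a detail the paper leaves implicit, but it does not change the approach.
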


Every persistence module morphism $\phi \colon V \to W$ can be factored as the composition of an epimorphism and monomorphism as follows:
\begin{displaymath}
\begin{tikzcd}
V  \twoheadrightarrow \im \phi \hookrightarrow W.
\end{tikzcd}
\end{displaymath}
Therefore, we can define a matching $\cX_\phi \colon \pdbf(V)  \matching \pdbf(W)$ via the composition of the following relations:
\[
\cX_\phi := \cX_{s(V,\im \phi)} \circ \cX_{i (\im \phi,W)}.
\]

In general, it is not true that if  $\phi \colon U \rightarrow V$ and $\psi \colon V \rightarrow W$ are PFD persistence module morphisms then $\X_{\psi \circ \phi} = \X_\psi \circ \X_\phi$.
However, the following result \cite[Proposition 5.7]{bauer_lesnick} provides hypotheses under which this is true.
\begin{prop}
\label{prop:functoriality}
Let $\phi : U \rightarrow V$ and $\psi : V \rightarrow W$ be PFD persistence module morphisms. If $\phi$ and $\psi$ are either both injective or both surjective, then $\X_{\psi \circ \phi} = \X_\psi \circ \X_\phi$.
\end{prop}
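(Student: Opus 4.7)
The plan is to reduce to the two stated cases---both $\phi$ and $\psi$ monomorphisms, or both epimorphisms---and in each case verify that the composite $\X_\psi \circ \X_\phi$ satisfies the characterization of the unique induced matching from Proposition~\ref{prop:match_i} or Proposition~\ref{prop:match_p}. So the key inputs are closure of monomorphisms and epimorphisms under composition, together with the uniqueness clauses of those two propositions.

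For the monomorphism case, $\psi \circ \phi$ is injective, so from the factorization definition of the induced matching we have $\X_\phi = \X_{i(U,V)}$, $\X_\psi = \X_{i(V,W)}$, and $\X_{\psi\circ\phi} = \X_{i(U,W)}$. I would then verify that the composite $\X_{i(V,W)} \circ \X_{i(U,V)}$ satisfies properties (i)--(iv) of Proposition~\ref{prop:match_i}: full domain on $\pdbf(U)$ and preservation of the right-handed ordering are immediate from the corresponding properties of the factors, and property (iv) (death preserved, birth non-increasing) follows by chaining the analogous inequalities at each step. The crux is (iii). For a fixed $d \in \E$, $\X_{i(U,V)}$ sends $\pdbf^d(U)$ order-isomorphically onto the first $|\pdbf^d(U)|$ points of $\pdbf^d(V)$ (in the induced right-handed ordering), and $\X_{i(V,W)}$ sends $\pdbf^d(V)$ order-isomorphically onto the first $|\pdbf^d(V)|$ points of $\pdbf^d(W)$. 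Since an order-preserving injection that maps into an initial segment carries initial segments to initial segments, the composite sends $\pdbf^d(U)$ onto the first $|\pdbf^d(U)|$ points of $\pdbf^d(W)$. Uniqueness in Proposition~\ref{prop:match_i} then yields $\X_{i(V,W)} \circ \X_{i(U,V)} = \X_{i(U,W)}$, as required.

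The epimorphism case is completely dual: $\psi \circ \phi$ is surjective, so all three matchings are of the form $\X_{s(\cdot,\cdot)}$, and one verifies the characterization of Proposition~\ref{prop:match_p} for $\X_{s(V,W)} \circ \X_{s(U,V)}$. The argument is the same after working with the inverse relations: one uses the left-handed ordering on $\pdbf_b(\cdot)$, the fact that each $\X^{-1}_{s(\cdot,\cdot)}$ has full image and preserves this ordering, and runs the same initial-segment argument, concluding once more by uniqueness. I do not expect any substantive obstacle beyond the ``initial-segment to initial-segment'' observation, which is elementary. The main conceptual point is that the two types of induced matching are structurally asymmetric---one controls the death coordinate with non-increasing births, the other controls the birth coordinate with non-decreasing deaths---so mixing a monomorphism factor with an epimorphism factor destroys the order compatibility on which the uniqueness argument relies, which is precisely why the hypothesis of Proposition~\ref{prop:functoriality} cannot be relaxed.
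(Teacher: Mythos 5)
Your proof is correct, but note that the paper does not actually prove this statement: it is quoted from Bauer and Lesnick (their Proposition 5.7), so there is no internal proof to compare against. Your argument is essentially the standard one underlying that citation: reduce to the two pure cases, note that for an injective $\phi$ the epimorphism onto $\im\phi$ is an isomorphism (whose induced matching is the identity, by uniqueness), so $\X_\phi=\X_{i(U,V)}$, and then verify conditions (i)--(iv) of Proposition~\ref{prop:match_i} (resp.\ Proposition~\ref{prop:match_p}) for the composite, concluding by the uniqueness clause. The only point worth tightening is the initial-segment step: what you need, and what you in fact establish in the preceding sentence, is that each factor restricts to an order-preserving bijection of $\pdbf^d(\cdot)$ \emph{onto} a downward-closed subset (the smallest $|\cdot|$ points), since an order-preserving injection merely mapping \emph{into} an initial segment need not carry initial segments to initial segments; with image downward-closed, the image of a downward-closed set of size $k$ is again downward-closed of size $k$, hence is the set of $k$ smallest points, which is exactly condition (iii). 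The dual bookkeeping for the epimorphism case (full image of the composite because $\im\X_{s(U,V)}=\pdbf(V)$, and the same argument applied to the inverse relations with the left-handed ordering) is as you describe, so the proposal stands.
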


\begin{defn}
\label{defn:bounds}
Let $A, B \subseteq \R$.
We say that \emph{$A$ bounds $B$ below}, written $A \bb B$,  if for all $y \in B$, there exists some $x \in A$ with $x \leq y$.
We say that \emph{$B$ bounds $A$ above}, written $A \ba B$, if for all $x \in A$, there exists some $y \in B$ such that $x \leq y$. 
We say that \emph{$B$ overlaps $A$ above}\emph{$B$ overlaps $A$ above}, written $A \overlap B$, if and only if each of the following conditions hold:
\[
A \bb B, \qquad
A \ba B, \qquad \text{and} \qquad
A \cap B \neq \emptyset.
\]
\end{defn}

\begin{prop}\emph{\cite[Proposition 5.3]{bauer_lesnick}}
\label{prop:admissible}
Let $\phi \colon V \rightarrow W$ be a PFD persistence module morphism. 
If $\cX_\phi([b,d,i]) = [b',d',i']$, then  $\interval{b'}{d'}\overlap\interval{b}{d} $.
\end{prop}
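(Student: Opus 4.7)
The plan is to leverage the canonical epi-mono factorization $V \twoheadrightarrow \im \phi \hookrightarrow W$ together with the definition of $\cX_\phi$ as the composition of the matchings $\cX_{s(V, \im \phi)}$ and $\cX_{i(\im \phi, W)}$. Given $\cX_\phi([b,d,i]) = [b',d',i']$, unfolding this composition produces an intermediate point $[b'',d'',i''] \in \pdbf(\im \phi)$ satisfying $\cX_{s(V, \im \phi)}([b,d,i]) = [b'',d'',i'']$ and $\cX_{i(\im \phi, W)}([b'',d'',i'']) = [b',d',i']$.

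I then invoke part (iv) of Propositions~\ref{prop:match_p} and~\ref{prop:match_i}: the first yields $b'' = b$ and $d'' \leq d$, while the second yields $d'' = d'$ and $b' \leq b''$. Chaining these together gives $b' \leq b$ and $d' \leq d$ in $\E$. Moreover, since $\interval{b''}{d''}$ is the support of an interval summand of $\im \phi$ and is therefore nonempty, the decorated endpoints satisfy $b'' < d''$ strictly in $\E$, which translates to $b < d'$.

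Writing $A := \interval{b'}{d'}$ and $B := \interval{b}{d}$, I verify the three clauses of Definition~\ref{defn:bounds}. The key observation is that, owing to $b' \leq b$ and $d' \leq d$, the intersection $A \cap B$ coincides with the interval $\interval{b}{d'} = \interval{b''}{d''}$, which is nonempty; hence $A \cap B \neq \emptyset$. The conditions $A \bb B$ and $A \ba B$ follow from the same inequalities: because $b' \leq b$, the left tail of $A$ extends at least as far left as every element of $B$, so a witness $x \in A$ with $x \leq y$ can be selected for each $y \in B$, and symmetrically $d' \leq d$ yields the upper bounding property.

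I expect the main obstacle to be the careful bookkeeping of the decorated endpoints when one or more of the inequalities $b' \leq b$, $d' \leq d$, or $b \leq d'$ are tight while the underlying real numbers coincide. The decorated-point ordering $- < +$ on $\E$ is designed precisely so that $b' \leq b$ forces the left endpoint of $A$ to include every real number that the left endpoint of $B$ includes (and symmetrically for the right endpoint); once this is checked, all edge cases reduce to a short enumeration against the interval-type conventions of Definition~\ref{defn:decorInt}.
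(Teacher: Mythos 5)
The paper itself gives no proof of this statement (it is quoted directly from \cite[Proposition 5.3]{bauer_lesnick}), but your argument is correct and is essentially the standard one behind that citation: factor $\phi$ through $\im \phi$, apply Proposition~\ref{prop:match_p}(iv) and Proposition~\ref{prop:match_i}(iv) to the two legs to get $b' \leq b$ and $d' \leq d$, and use the nonemptiness of the intermediate interval $\interval{b}{d'} = \interval{b''}{d''}$ to obtain $A \cap B \neq \emptyset$, with $A \bb B$ and $A \ba B$ then following from the decorated-endpoint inequalities. No gap; the decorated bookkeeping you flag is exactly the routine check required.
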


\section{Generalized Induced Matching Theorem}
\label{sec:gen_induced_matching}
In this section we present a generalization of the Induced Matching Theorem of \cite{bauer_lesnick}.

\begin{defn} 
Let $\sigma$ be a translation map  and let $b,d\in \E$. 
An interval   $\interval{b}{d}$ is \emph{$\sigma$-trivial} if $\interval{b}{d} \cap  \sigma(\interval{b}{d}) = \emptyset$.  
A point $[b,d,i] \in \E\times\E\times\N$  is  $\sigma$-trivial if   $\interval{b}{d}$ is \emph{$\sigma$-trivial}. 
A point in $\E\times\E\times\N$ that is not $\sigma$-trivial is called \emph{$\sigma$-nontrivial}.
\end{defn}

\begin{thm}
\label{thm:geninducedmatching}
Let $\phi \colon V \rightarrow W$ be a PDF persistence module morphism and  $\sigma$ a translation map. 
Suppose that  $\cX_\phi([b,d,i]) = [b',d',i']$.  
\begin{enumerate}[(i)]
\item If $\coker \phi$ is $\sigma$-trivial, then
\[
\interval{b}{d} \bb \sigma(\interval{b'}{d'}) \text{ and } \im \cX_\phi \text{ contains all } \sigma\text{-nontrivial points in } \pdbf(W).
\]
\item  If $\ker \phi$ is $\sigma$-trivial, then 
\[
\sigma^{-1}(\interval{b}{d} )\ba \interval{b'}{d'} \text{ and } \dom \cX_\phi \text{ contains all } \sigma\text{-nontrivial points in } \pdbf(V). 
\] 
\end{enumerate}
\end{thm}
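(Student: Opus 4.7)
The plan is to mimic the structure of the Bauer--Lesnick Induced Matching Theorem, but with the uniform $\delta$-shift replaced by the general translation map $\sigma$ and the technical endpoint bookkeeping carried out via the Galois-connection formalism of Section~\ref{subsec:2-preliminaries-GaloisConnections}. The first step is to factor $\phi$ through its image: write $\phi = \iota \circ \pi$ with $\pi\colon V \twoheadrightarrow \im\phi$ an epimorphism and $\iota\colon \im\phi \hookrightarrow W$ a monomorphism. Then $\ker\pi = \ker\phi$ and $\coker\iota = \coker\phi$, so the hypotheses of (i) and (ii) split cleanly across the two factors. Accordingly, it suffices to prove two sub-lemmas: one analyzing $\cX_{i(\im\phi,W)}$ under $\sigma$-triviality of $\coker\iota$ (for (i)), and one analyzing $\cX_{s(V,\im\phi)}$ under $\sigma$-triviality of $\ker\pi$ (for (ii)). Composing these sub-lemmas via the defining formula $\cX_\phi = \cX_{s(V,\im\phi)} \circ \cX_{i(\im\phi,W)}$, and using that $\pi$ preserves $b$ while $\iota$ preserves $d$, then delivers the full statement.

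For part (i), the crucial input is that $\sigma$-triviality of $\coker\iota$ forces $\im\varphi_W(t,\sigma(t)) \subseteq \im\iota_{\sigma(t)}$ for every $t \in \R$; this is exactly the computation appearing in the cokernel half of Proposition~\ref{prop:interleaving_kernel_cokernel}. Translated into the language of interval decompositions, any interval summand $\interval{b'}{d'}$ of $W$ that is $\sigma$-nontrivial (so $\interval{b'}{d'}\cap\sigma(\interval{b'}{d'})\neq\emptyset$) carries a nonzero generator at some $t$ in this overlap whose transition to time $\sigma(t)$ must lift into $\im\phi$. A counting/ordering argument within each right-handed class $\pdbf^d(W)$ analogous to \cite[Theorem~4.2]{bauer_lesnick} then shows that every such $\sigma$-nontrivial point must lie in $\im \cX_{i(\im\phi,W)} \subseteq \im\cX_\phi$, and that the matching partner $[b_m,d,i_m]\in\pdbf(\im\phi)$ satisfies a bound relating $b_m$ to $\sigma(b')$. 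Unpacking this bound at the level of decorated endpoints via the Galois connection $(\sigma^\downarrow,\sigma^\star)$ from Proposition~\ref{prop:endpointgaloisconnections}, and then promoting to intervals through $\bra{\cdot}$, precisely yields $\interval{b_m}{d} \bb \sigma(\interval{b'}{d'})$. Since the epi matching preserves birth ($b=b_m$), this gives $\interval{b}{d} \bb \sigma(\interval{b'}{d'})$, as desired.

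Part (ii) proceeds symmetrically: $\sigma$-triviality of $\ker\pi$ implies that $\ker\varphi_V(t,\sigma(t)) \supseteq \ker\pi_t$, which constrains $\cX_{s(V,\im\phi)}$ so that if $[b,d,i]$ maps to $[b,d_m,i_m]$ with $d_m \leq d$, then $d$ is not too far beyond $d_m$, in a sense dual to the above and encoded by the Galois connection $(\sigma^\star,\sigma^\uparrow)$; composing with $\iota$ (which preserves death) gives $\sigma^{-1}(\interval{b}{d}) \ba \interval{b'}{d'}$, and the same counting argument places every $\sigma$-nontrivial point of $\pdbf(V)$ into $\dom \cX_\phi$. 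The main obstacle I anticipate is not the conceptual scheme, which closely follows Bauer--Lesnick, but rather the careful endpoint bookkeeping: the decorated birth/death values $b^\pm,d^\pm$ interact with a non-uniform $\sigma$ in a way that requires systematic use of $\sigma^\uparrow,\sigma^\downarrow,\sigma^\star$, together with Propositions~\ref{prop:endpointgaloisconnections} and~\ref{prop:demotionconnection}, to pin down the bounding relations $\bb$ and $\ba$ unambiguously. In particular, the verification that no $\sigma$-nontrivial point escapes the matching will rest on a case analysis ruling out that $\ker\phi$ or $\coker\phi$ can contain $\sigma$-nontrivial interval summands, and here the Galois machinery is essential to handle boundary decorations uniformly.
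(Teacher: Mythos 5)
Your overall plan coincides with the paper's: factor $\phi$ as $V \twoheadrightarrow \im \phi \hookrightarrow W$, push the $\sigma$-triviality of $\coker \phi$ (resp.\ $\ker \phi$) onto the monomorphism (resp.\ epimorphism) factor, and compose the two canonical matchings. The genuine gap is in the step you describe as ``a counting/ordering argument within each right-handed class $\pdbf^d(W)$ analogous to Bauer--Lesnick.'' The pointwise containment $\im \varphi_W(t,\sigma(t)) \subseteq \im \phi_{\sigma(t)}$ does not by itself control which points of $\pdbf(W)$ lie in $\im \cX_{i(\im \phi, W)}$, nor how far their births move: that matching is determined by cardinalities and orderings of the sets $\pdbf^d$, so one needs a persistence \emph{submodule} of $\im \phi$ whose barcode can be computed explicitly and compared with $\pdbf(W)$. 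This is exactly what the paper supplies and what your sketch omits: the module $W^\sigma$ with $W^\sigma_t = \bigcup_{\setof{s:\sigma(s)\leq t}} \im \varphi_W(s,t)$ (Definition~\ref{defn:w_sigma}), the fact that $W^\sigma \subseteq \im \phi \subseteq W$ (Lemma~\ref{lem:optimality_w_k}, which is where your containment actually gets used), the computation $I_J^\sigma \cong I_{J \cap \conv{\sigma(J)}}$ on interval summands (Lemma~\ref{lem:w_sigma_barcode_interval}; the convex hull is essential, since for a non-uniform, possibly discontinuous or non-surjective $\sigma$ the set $\sigma(J)$ need not be an interval), the resulting identification of $\cX_{i(W^\sigma,W)}$ on $\sigma$-nontrivial points (Proposition~\ref{prop:w_sigma_barcode}), and finally functoriality of induced matchings (Proposition~\ref{prop:functoriality}) applied to the composite of monomorphisms $W^\sigma \hookrightarrow \im \phi \hookrightarrow W$. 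Only this combination yields both conclusions of (i): that $\im \cX_\phi$ contains every $\sigma$-nontrivial point of $\pdbf(W)$, and that the intermediate birth satisfies $b \leq \cB(J)$, whence $\interval{b}{d} \bb \sigma(\interval{b'}{d'})$. The dual construction $V/\ker \phi_{\setof{V,\sigma}}$ (Proposition~\ref{prop:k_kernel_barcode}) plays the same role in (ii). Your observation that a generator ``lifts at some time $t$'' is a statement about single vectors and does not determine the interval decomposition of $\im \phi$, so the counting you appeal to has nothing concrete to count as written.

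Two smaller points. First, you establish the bound $\interval{b}{d} \bb \sigma(\interval{b'}{d'})$ only when the matched target $[b',d',i']$ is $\sigma$-nontrivial, but the theorem asserts it for every matched pair; the $\sigma$-trivial case needs its own short argument, which the paper gets from Proposition~\ref{prop:admissible} ($b < d'$) together with the fact that $\sigma(\interval{b'}{d'})$ then lies entirely above $d'$. Second, the Galois-connection formalism you invoke for endpoint bookkeeping is not where the difficulty lies and is not used in the paper's proof of this theorem; it only enters in Section~\ref{sec:algebraic_stability} when interval relations are converted into endpoint inequalities. None of this contradicts your outline, but as written the central sub-lemmas are asserted rather than proved, and the missing auxiliary modules are precisely the new content of the generalization.
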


Note that the Induced Matching Theorem of \cite{bauer_lesnick} follows from Theorem~\ref{thm:geninducedmatching} by setting $\sigma(t) = t + \delta$ for $\delta \geq 0$.

The remainder of this section is devoted to the proof of Theorem~\ref{thm:geninducedmatching}.

\begin{defn}
\label{defn:w_sigma}
Let $V$ be a persistence module and  $\sigma$ a translation map. 
Define  vector spaces of the  persistence module $V^\sigma$ by
\[
V_t^\sigma := \bigcup_{\setof{ s \colon \sigma(s) \leq t}.
} \im \varphi_V(s, t)
\]
for $t\in \R$. 
The linear maps $\varphi_{V^\sigma}$ are given by restriction of the maps $\varphi_V$ to $V^\sigma$.
\end{defn}

The following lemma shows that $V^\sigma$ is a persistence module.

\begin{lem}
\label{lem:w_sigma_submodule}
Let $V$ be a persistence module and  $\sigma$ a translation map. 
Then, $V^\sigma$ is a persistence submodule of $V$.
\end{lem}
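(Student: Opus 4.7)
My plan is to verify the two conditions required for $V^\sigma$ to be a persistence submodule of $V$: (1) for each $t\in\R$, the set $V_t^\sigma$ is a vector subspace of $V_t$, and (2) for every $s \leq t$ in $\R$, the transition map $\varphi_V(s,t)$ restricts to a linear map $V_s^\sigma \to V_t^\sigma$.

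For (1), the key observation is that the indexing set $S_t := \{s \in \R : \sigma(s) \leq t\}$ is a lower set of $\R$, because $\sigma$ is monotone: if $s \in S_t$ and $s' \leq s$, then $\sigma(s') \leq \sigma(s) \leq t$. Moreover, the family of subspaces $\{\im \varphi_V(s,t)\}_{s \in S_t}$ is directed (in fact, totally ordered by inclusion) in the following sense: if $s_1, s_2 \in S_t$ with $s_1 \leq s_2$, then using the factorization $\varphi_V(s_1, t) = \varphi_V(s_2, t) \circ \varphi_V(s_1, s_2)$ I get $\im \varphi_V(s_1, t) \subseteq \im \varphi_V(s_2, t)$. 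A directed union of vector subspaces is a vector subspace, so $V_t^\sigma$ is a subspace of $V_t$. (Care is needed at the edge case where $S_t = \emptyset$, in which case $V_t^\sigma = \{0\}$ under the convention that an empty union of subspaces is the zero subspace; alternatively one can note that since $\sigma$ is a translation map one typically still has $V_t^\sigma = \{0\}$ for sufficiently small $t$, which is trivially a subspace.)

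For (2), I take $v \in V_s^\sigma$, so $v \in \im \varphi_V(r, s)$ for some $r$ with $\sigma(r) \leq s$. Writing $v = \varphi_V(r, s)(u)$, I compute
\[
\varphi_V(s, t)(v) = \varphi_V(s, t) \circ \varphi_V(r, s)(u) = \varphi_V(r, t)(u) \in \im \varphi_V(r, t).
\]
Since $\sigma(r) \leq s \leq t$, we have $r \in S_t$, so $\im \varphi_V(r,t) \subseteq V_t^\sigma$, giving $\varphi_V(s,t)(V_s^\sigma) \subseteq V_t^\sigma$ as required. Thus restriction yields a well-defined linear map $\varphi_{V^\sigma}(s,t)$, and the identity and composition axioms of Definition~\ref{defn:persistence_module} are inherited directly from $V$.

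I do not expect any serious obstacle: both required properties are essentially consequences of the monotonicity of $\sigma$ and the functoriality of the transition maps $\varphi_V$. The only subtlety worth flagging explicitly is confirming that the union defining $V_t^\sigma$ is over a directed system of subspaces, which is what makes the union a subspace rather than merely a set; this is the one place where monotonicity of $\sigma$ is actually used.
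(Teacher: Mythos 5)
Your proof is correct and follows essentially the same route as the paper: the closure argument under the transition maps (writing $y=\varphi_V(t',s)(x)$ with $\sigma(t')\leq s$ and composing to land in $\im\varphi_V(t',t)\subseteq V_t^\sigma$) is exactly the paper's computation. The only difference is that you spell out the directed-union argument (and the empty-index edge case) for why $V_t^\sigma$ is a subspace, a point the paper simply asserts ``by definition.''
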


\begin{proof}
By definition, $V_t^\sigma$ is a subspace of $V_t$ for  $t\in \R$.
To see that $V^\sigma$ is a persistence submodule of $V$, we must show that $\im \varphi_V(s, t)|_{V_s^\sigma} \subseteq V_t^\sigma$ for all $s \leq t$. 
To  do this, we consider $y \in V^\sigma_s$ and show  that $\varphi(s,t)(y) \in V^\sigma_t$.
By definition, there exists $x \in V_{t'}$ for some  $t'\in \R$ such that $ \sigma(t') \leq s$ and   $\varphi_V(t',s)(x) = y$. Thus,
\[
\varphi_V(s, t)(y) =\varphi_V(s,t)[\varphi_V(t',s)(x)] = \varphi_V(t',t)(x).
\]
Since $\sigma$ is a translation map,  $t' \leq \sigma(t') \leq s \leq t$, and so  $\varphi(s,t)(y) \in \im \varphi_V(t',t) \subseteq V_t^\sigma$.
\end{proof}

\begin{lem}
\label{lem:optimality_w_k}
Let $\phi : V \rightarrow W$ be a persistence module morphism and  ${\sigma}$ a translation map.
\begin{enumerate}[(i)]
\item If $\im \phi$ is $\sigma$-trivial, then $W_t^\sigma \subseteq \im \phi_t \subseteq W_t$ for every $t \in \R$, and
\item if $\ker \phi$ is $\sigma$-trivial, then $\ker \phi_t \subseteq (\ker \phi_{\setof{V,\sigma}})_t \subseteq V_t$ for every $t \in \R$.
\end{enumerate}
\end{lem}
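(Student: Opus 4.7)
The plan is to establish each inclusion by carefully unfolding the definition of $\sigma$-trivial for the relevant submodule and making use of Definition~\ref{rem:ker_morphism}. Part (ii) is essentially immediate; part (i) is more involved but still reduces to a manipulation of the transition maps of $W$.

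\emph{Part (ii).} Take $v \in \ker\phi_t$. Since $\ker\phi$ is a persistence submodule of $V$, its transition map $\varphi_{\ker\phi}(t,\sigma(t))$ is by definition the restriction of $\varphi_V(t,\sigma(t))$ to $\ker\phi_t$. The hypothesis that $\ker\phi$ is $\sigma$-trivial then forces $\varphi_V(t,\sigma(t))(v)=0$. Since $(\phi_{\setof{V,\sigma}})_t = \varphi_V(t,\sigma(t))$ by Definition~\ref{rem:ker_morphism}, this says precisely $v\in\ker(\phi_{\setof{V,\sigma}})_t = (\ker\phi_{\setof{V,\sigma}})_t$. The inclusion $(\ker\phi_{\setof{V,\sigma}})_t\subseteq V_t$ holds by construction.

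\emph{Part (i).} The inclusion $\im\phi_t\subseteq W_t$ is immediate. For the main inclusion $W^\sigma_t\subseteq\im\phi_t$, pick $w\in W^\sigma_t$ and, by Definition~\ref{defn:w_sigma}, write $w=\varphi_W(s,t)(w')$ with $\sigma(s)\leq t$ and $w'\in W_s$. Factor the transition map as $\varphi_W(s,t) = \varphi_W(\sigma(s),t)\circ\varphi_W(s,\sigma(s))$, so that $w = \varphi_W(\sigma(s),t)(u)$ where $u := \varphi_W(s,\sigma(s))(w')$. The plan is to show $u\in\im\phi_{\sigma(s)}$; once this is established, $w\in\im\phi_t$ follows because $\im\phi$ is a persistence submodule of $W$, so $\varphi_W(\sigma(s),t)$ restricts to a map $\im\phi_{\sigma(s)}\to\im\phi_t$.

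\emph{Main obstacle.} The crux is the verification that $u = \varphi_W(s,\sigma(s))(w')$ lies in $\im\phi_{\sigma(s)}$. The $\sigma$-trivial hypothesis on $\im\phi$ tells us that the restriction of $\varphi_W(s,\sigma(s))$ to $\im\phi_s$ is zero; my approach would be to leverage this by passing to the short exact sequence of persistence modules $0 \to \im\phi \to W \to W/\im\phi \to 0$ and verifying that the class of $u$ in $(W/\im\phi)_{\sigma(s)}$ vanishes via a diagram chase using the induced transition map on the quotient. This is the step in which the full strength of the $\sigma$-trivial hypothesis must be brought to bear, following the template of the analogous argument in Bauer and Lesnick~\cite{bauer_lesnick}. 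The remainder of the argument is bookkeeping: assembling the identification $u\in\im\phi_{\sigma(s)}$ with the persistence-submodule property of $\im\phi$ to conclude $w\in\im\phi_t$ and hence $W^\sigma_t\subseteq\im\phi_t$.
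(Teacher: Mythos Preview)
Your proof of part (ii) is correct and essentially identical to the paper's.

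For part (i), however, there is a genuine gap, and it originates in a typo in the statement itself: the hypothesis should read ``$\coker\phi$ is $\sigma$-trivial'', not ``$\im\phi$ is $\sigma$-trivial''. The paper's own proof begins by unwinding the assumption that $\coker\phi$ is $\sigma$-trivial, and this is also the hypothesis under which the lemma is invoked in the proof of Theorem~\ref{thm:geninducedmatching}. With the hypothesis as literally written the conclusion is false: take $V=0$, so $\phi=0$ and $\im\phi=0$ is trivially $\sigma$-trivial, yet $W^\sigma_t$ need not be zero.

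You correctly parse the stated hypothesis as $\varphi_W(s,\sigma(s))|_{\im\phi_s}=0$, but this points the wrong way: it says that elements \emph{of} $\im\phi$ die under the transition map, not that elements get sent \emph{into} $\im\phi$. Your proposed diagram chase---showing that the class of $u=\varphi_W(s,\sigma(s))(w')$ vanishes in $(W/\im\phi)_{\sigma(s)}=(\coker\phi)_{\sigma(s)}$---requires exactly that the transition map $\varphi_{\coker\phi}(s,\sigma(s))$ be zero. That is the \emph{corrected} hypothesis, not a consequence of the one you are working from, so the chase cannot close.

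Once the hypothesis is repaired, your overall strategy and the paper's coincide: $\coker\phi$ being $\sigma$-trivial is equivalent to $\im\varphi_W(s,\sigma(s))\subseteq\im\phi_{\sigma(s)}$ for all $s$, which gives $u\in\im\phi_{\sigma(s)}$ immediately; then $w=\varphi_W(\sigma(s),t)(u)\in\im\phi_t$ because $\im\phi$ is a submodule of $W$. The paper phrases this via a commutative square involving $V_{\sigma(s)}\to V_t$ and $W_{\sigma(s)}\to W_t$, but the content is the same factorization you set up.
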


\begin{proof}

(i) By definition, given a morphism $\phi : V \rightarrow W$, the persistence module $\coker{\phi}$ is $\sigma$-trivial if and only if 
\[
{\varphi_{\coker \phi}(t , \sigma(t)) = 0} \quad \text{for all} \quad t \in \R, 
\]
which is true if and only if 
\[
\im \varphi_W(t , \sigma(t)) \subseteq \im \phi(\sigma)_t \quad \text{for all} \quad t \in \R,
\]
which again is true if and only if for each $t \in \R$ and each $x \in W_t$, there exists some $y \in V_{\sigma(t)}$ such that
\[
\varphi_W(t , \sigma(t))(x) = \phi(\sigma)_t(y).
\]
So, to prove that  $W^\sigma_t \subseteq \im \phi_t$, it is enough to show that $\im \varphi_W(t' , t) \subseteq \im \phi_{t'}$ for every $t'\in \R$ such that $t' \leq \sigma(t)$. By commutativity of the diagram 
\[
\begin{tikzcd}
&&& V_{\sigma(t')} \arrow[rrr,rightarrow,"\varphi_V(\sigma(t')\text{,} t)"]  \arrow[d,"\phi(\sigma)_{t'}"] &&& V_t  \arrow[d,"\phi_t"] \\
W_{t'} \arrow[rrr, "\varphi_W(t'\text{,} \sigma(t'))"] &&& W_{\sigma(t')} \arrow[rrr, "\varphi_W(\sigma(t')\text{,} t)"] &&& W_t \\
\end{tikzcd}
\]
we have
\[
\varphi_W(t', t)(x) = \varphi_W(\sigma (t'), t) [ \varphi_W(t', \sigma(t'))(x)] = \phi_t [\varphi_V(\sigma(t'), t)(y)],
\]
and so  $\im \varphi_W(t' , t) \subseteq \im \phi_t$.

To prove (ii), we  show that  $\ker \phi_t \subseteq (\ker \varphi_{\setof{V,\sigma}})_t$ for all $t \in \R$ whenever $\phi$ has a $\sigma$-trivial kernel.
By definition, $\ker \phi$ is $\sigma$-trivial if and only if 
\[
\varphi_V(t, \sigma(t))|_{\ker \phi_t} = \varphi_{\ker \phi}(t, \sigma(t)) = 0
\]
for all $t \in \R$.
Hence, $\ker \phi_t \subseteq \ker \varphi_V(t, \sigma(t)) = (\ker \phi_{\setof{V,\sigma}})_t$ for all $t \in \R$ if and only if $\ker \phi$ is $\sigma$-trivial.
\end{proof}

We now study the relationship between  the persistence module $V$ and the persistence modules $V^\sigma$ and $V/\ker \phi_{\setof{V,\sigma}}$. We start by considering an interval persistence module.

\begin{lem}
\label{lem:w_sigma_barcode_interval}
Let $I_J$ be an interval persistence module and  $\sigma$ a translation map. 
If $J \cap \sigma(J) \neq \emptyset$, then 
\begin{enumerate}[(i)]
\item $I_J^\sigma \cong I_{J \cap \conv{\sigma(J)}} $, where $\conv{\sigma(J)}$ is the convex hull of $\sigma(J)$,  
\item $V/\ker \phi_{\setof{V,\sigma}} \cong I_{J \cap \sigma^{-1}(J)}$,
\end{enumerate}
If $J \cap \sigma(J) = \emptyset$,  then both persistence modules are trivial.  
\end{lem}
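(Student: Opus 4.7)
The plan is to unwind both modules from their definitions, taking advantage of the fact that an interval module has the simplest possible structure: each transition map $\varphi_{I_J}(s,t)$ is the identity of $\sk$ when $s \leq t$ and both lie in $J$, and zero otherwise. This reduces everything to bookkeeping about which indices $t$ yield nonzero vector spaces.

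I would begin with part (ii), since it is the more mechanical. By Definition~\ref{rem:ker_morphism}, the component $(\phi_{\setof{I_J,\sigma}})_t = \varphi_{I_J}(t, \sigma(t))$ is the identity of $\sk$ precisely when $t \in J$ and $\sigma(t) \in J$ (i.e., $t \in J \cap \sigma^{-1}(J)$), and is the zero map otherwise. Hence $(\ker \phi_{\setof{I_J,\sigma}})_t = (I_J)_t$ on the set $J \setminus \sigma^{-1}(J)$ and is $0$ on $J \cap \sigma^{-1}(J)$. Passing to the quotient yields $(I_J / \ker \phi_{\setof{I_J,\sigma}})_t \cong \sk$ exactly on $J \cap \sigma^{-1}(J)$, with the induced transition maps necessarily being identities. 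Monotonicity of $\sigma$ makes $\sigma^{-1}(J)$ an interval (preimage of an interval under a monotone map is an interval), so $J \cap \sigma^{-1}(J)$ is an interval, and the quotient is isomorphic to $I_{J \cap \sigma^{-1}(J)}$. In the disjoint case, $t \in J \cap \sigma^{-1}(J)$ would force $\sigma(t) \in J \cap \sigma(J) = \emptyset$, so the quotient is trivial.

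For part (i), I would use Definition~\ref{defn:w_sigma} to compute $(I_J^\sigma)_t = \bigcup_{s : \sigma(s)\leq t} \im \varphi_{I_J}(s,t)$. The image is nonzero only when $s \in J$ (and $s \leq t$, but $s \leq \sigma(s) \leq t$ is automatic since $\sigma$ is a translation map), which additionally requires $t \in J$. So $(I_J^\sigma)_t \cong \sk$ if and only if $t \in J$ and there exists $s \in J$ with $\sigma(s) \leq t$. I would then show this support coincides with $J \cap \conv{\sigma(J)}$. For $t \in J$, the upper bound $t \leq \sup \sigma(J)$ is automatic because $t \leq \sigma(t) \in \sigma(J)$, so what matters is the lower bound $t \geq \inf \sigma(J)$, together with the question of whether the infimum is attained at some $s \in J$. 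Both questions are exactly what determine membership in $\conv{\sigma(J)}$ (endpoint included precisely when $\inf \sigma(J) \in \sigma(J)$), so the two descriptions of the support agree, including at endpoints. The transition maps, being restrictions of identities, are identities, establishing the isomorphism to $I_{J \cap \conv{\sigma(J)}}$.

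The disjoint case $J \cap \sigma(J) = \emptyset$ for part (i) follows from a short convexity argument: if $(I_J^\sigma)_t \neq 0$, witnessed by some $s \in J$ with $\sigma(s) \leq t$, then $s \leq \sigma(s) \leq t$ with $s, t \in J$, and convexity of $J$ forces $\sigma(s) \in J$, contradicting disjointness; hence $I_J^\sigma$ is trivial. The main technical nuisance is tracking open versus closed endpoints for $J \cap \conv{\sigma(J)}$ (whether the left endpoint $\inf \sigma(J)$ is included), since this depends on whether the infimum is attained in $\sigma(J)$; this is exactly mirrored by whether the condition ``$\exists s \in J$ with $\sigma(s) \leq t$'' holds at the boundary value $t = \inf \sigma(J)$, so the decorated endpoint formulation of Definition~\ref{defn:decorInt} makes the two descriptions match term-by-term.
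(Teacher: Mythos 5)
Your proposal is correct and takes essentially the same approach as the paper: unwind the definitions of $I_J^\sigma$ and $\phi_{\setof{I_J,\sigma}}$ for an interval module, identify the support as $J \cap \conv{\sigma(J)}$ (resp.\ $J \cap \sigma^{-1}(J)$), and use the convexity/translation-map argument for the disjoint case. The paper states this argument tersely for (i) and leaves (ii) to the reader, so your explicit kernel-and-quotient computation and endpoint bookkeeping simply fill in the same route in more detail rather than constituting a different method.
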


\begin{proof}
(i) We first show that $(I_J^\sigma)_t \neq \emptyset$ for $t \in J \cap \conv{\sigma(J)}$. Since $\sigma$ is a translation map, for every $t \in \conv{\sigma(J)}$, there exist $s\in J$ such that $s \leq \sigma(s) \leq t$.  
The fact that $t\in J$ implies  im $\varphi_{I_J}(s, t) =\text{id}_{\sk}$ and hence $\emptyset \neq \im \varphi_{I_J}(s, t) \subseteq (I_J^\sigma)_t$. 

To finish the proof of (i), we need to show that $(I_J^\sigma)_t = 0$ if $t \notin J \cap \conv{\sigma(J)}$. Suppose that $t \not\in J$. 
Then $(I_J^\sigma)_t \subseteq (I_J)_t =0$. On the other hand, if $t \notin  \conv{\sigma(J)}$ and $\sigma(s) \leq t$, then  $s \not\in J$, and so  $\varphi_{I_J}(s, t)  = 0$ for all $s$ such that $\sigma(s) \leq t$.  

If $J \cap \sigma(J) = \emptyset$, then $J \cap \conv{\sigma(J)} = 0$ and we showed above that $\varphi_{I_J}(s, t)  = 0$ for all $s \leq t$. It follows that $I_J^\sigma$ is trivial.  Similar arguments can be used to prove (ii), and we leave it to the reader. 
\end{proof}

For the following two definitions, for an interval $J \subseteq \R$, we recall the symbols $\cB(J)$ and $\cD(J)$ (Definition~\ref{defn:decorInt})  give the left and right (decorated) endpoints of $J$, respectively.

\begin{prop}
\label{prop:w_sigma_barcode}
Let $V$ be a PFD persistence module and $\sigma$ a translation map. 
Suppose that $[b,d,i] \in \pdbf(V)$ and the interval $J = \interval{b}{d} \cap  \conv{\sigma\interval{b}{d}}$.  
Then  $[\cB(J),d,i] \in \pdbf(V^\sigma)$ if and only if $J \neq \emptyset$.
Moreover,
\[
\cX_{i(V^\sigma,V)}([\cB(J),d,i]) = [b,d,i].
\]
\end{prop}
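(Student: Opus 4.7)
The plan is to reduce to the case of a single interval module via the interval decomposition of $V$, and then pin down the matching using Proposition~\ref{prop:match_i}.

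First, apply Theorem~\ref{thm:interval_decomp} to write $V = \bigoplus_\alpha I_{J_\alpha}$. Because the construction in Definition~\ref{defn:w_sigma} is assembled from images and unions of subspaces, both of which distribute over direct sums, we have $V^\sigma = \bigoplus_\alpha I_{J_\alpha}^\sigma$. Lemma~\ref{lem:w_sigma_barcode_interval}(i) then identifies each summand: it is trivial when $J_\alpha \cap \sigma(J_\alpha) = \emptyset$, and isomorphic to $I_{J_\alpha \cap \conv{\sigma(J_\alpha)}}$ otherwise. The uniqueness part of Theorem~\ref{thm:interval_decomp} now identifies $\pdbf(V^\sigma)$ as exactly the collection of points coming from the nontrivial summands.

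Second, specialize to the summand $I_{\interval{b}{d}}$ associated with $[b,d,i] \in \pdbf(V)$. Its image under the $(\cdot)^\sigma$ construction is $I_J$ with $J = \interval{b}{d} \cap \conv{\sigma\interval{b}{d}}$. Since $\sigma$ is a translation map, $\sigma(t) \geq t$ for every $t \in \interval{b}{d}$, so $\sup \sigma\interval{b}{d} \geq \sup \interval{b}{d}$; tracking endpoint decorations gives $\cD(J) = d$. Therefore $[\cB(J), d, i]$ appears in $\pdbf(V^\sigma)$ precisely when $J \neq \emptyset$, which proves the biconditional.

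Third, for the matching identity, apply Proposition~\ref{prop:match_i} to the inclusion monomorphism $V^\sigma \hookrightarrow V$. Fix the right endpoint $d$ and enumerate the intervals in $V$'s decomposition with right endpoint $d$ in right-handed order as $K_1, K_2, \ldots$, so their left endpoints are non-decreasing and $K_{j+1} \subseteq K_j$. Monotonicity of $\sigma$ yields $\conv{\sigma(K_{j+1})} \subseteq \conv{\sigma(K_j)}$, and hence
\[
K_{j+1} \cap \conv{\sigma(K_{j+1})} \subseteq K_j \cap \conv{\sigma(K_j)}.
\]
This nesting shows both that the nonempty intersections form an initial segment of the list and that they retain the right-handed order in $\pdbf^d(V^\sigma)$. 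Proposition~\ref{prop:match_i}(ii)--(iii) then forces $\cX_{i(V^\sigma, V)}$ to send the $k$-th element of $\pdbf^d(V^\sigma)$ to the $k$-th element of $\pdbf^d(V)$, yielding the claimed pairing.

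The main obstacle is the bookkeeping of the multiplicity index $i$ when distinct $J_\alpha$ in $V$'s decomposition collapse to the same $J$ in $V^\sigma$; this is resolved by taking the interval decomposition of $V^\sigma$ to be inherited from that of $V$, so that the natural indexing of copies in $\pdbf(V^\sigma)$ is compatible with the right-handed order that the induced matching respects. The nested-prefix structure above is exactly what makes this consistent choice possible.
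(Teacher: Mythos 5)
Your proof is correct and follows essentially the same route as the paper: decompose $V$ into interval modules, apply Lemma~\ref{lem:w_sigma_barcode_interval} to identify the summands of $V^\sigma$ (noting $\cD(J)=d$ since $\sigma$ is a translation map), and invoke Proposition~\ref{prop:match_i} for the matching. The only difference is that you spell out the nesting/initial-segment argument and the multiplicity bookkeeping that the paper compresses into ``a simple consequence of Proposition~\ref{prop:match_i}.''
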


\begin{proof}

Let $[b,d,i] \in \pdbf(V)$. Since $\sigma$ a translation map,  $J = \interval{\cB(J)}{d}$. 
By Lemma~\ref{lem:w_sigma_barcode_interval}, the interval persistence module $I^\sigma _{\interval{b}{d}}$ is nontrivial  if and only if $J \neq\emptyset$. 
It follows from  Theorem~\ref{thm:interval_decomp} that the interval persistence module $I^\sigma _{\interval{b}{d}}   $($\cong I_{\interval{\cB(J)}{d}}$) belongs to  the interval decomposition of $V^\sigma$ if and only if $J \neq\emptyset$. Thus, $[\cB(J),d,i] \in \pdbf(V^\sigma)$ for every $[b,d,i] \in \pdbf(V)$ such that $\interval{b}{d} \cap  \conv{\sigma\interval{b}{d}} \neq \emptyset$. Now, $\cX_{i(V^\sigma,V)}([\cB(J),d,i]) = [b,d,i]$ is a simple consequence of  Propostion~\ref{prop:match_i}.
\end{proof}

By using Lemma~\ref{lem:w_sigma_barcode_interval}(ii) and similar reasoning as above, one can  prove the following about the matching $\cX_{s(V, V/\ker \phi_{\setof{V,\sigma}})}$.

\begin{prop}
\label{prop:k_kernel_barcode}
Let $V$ be a PFD persistence module and $\sigma$ be a translation map. Suppose that  $[b,d,i] \in \pdbf(V)$. 
Then $[b,\cD(J),i] \in \pdbf(V/\ker \phi_{\setof{V,\sigma}})$ if  and only if $J := \interval{b}{d} \cap  \sigma^{-1}(\interval{b}{d}) \neq \emptyset$.  Moreover,
 \[
 \cX_{s(V, V/\ker \phi_{\setof{V,\sigma}})}([b,d,i]) = [b,\cD(J),i].
 \]
\end{prop}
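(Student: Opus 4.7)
The plan is to mirror the short proof of Proposition~\ref{prop:w_sigma_barcode}, using Lemma~\ref{lem:w_sigma_barcode_interval}(ii) in place of (i) and the canonical quotient epimorphism $V \twoheadrightarrow V/\ker \phi_{\setof{V,\sigma}}$ in place of the submodule inclusion $V^\sigma \hookrightarrow V$. By Theorem~\ref{thm:interval_decomp}, decompose $V \cong \bigoplus_{I_{J'} \in \mathscr{J}_V} I_{J'}$. Since $(\phi_{\setof{V,\sigma}})_t = \varphi_V(t,\sigma(t))$ is an internal transition map of $V$, the morphism $\phi_{\setof{V,\sigma}}$ is diagonal with respect to this decomposition, decomposing as $\bigoplus_{J'} \phi_{\setof{I_{J'},\sigma}}$. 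Because persistence modules form an abelian category in which kernels and quotients commute with direct sums,
\[
V/\ker \phi_{\setof{V,\sigma}} \;\cong\; \bigoplus_{I_{J'} \in \mathscr{J}_V} I_{J'}/\ker \phi_{\setof{I_{J'},\sigma}}.
\]

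Next, applying Lemma~\ref{lem:w_sigma_barcode_interval}(ii) summand by summand, the quotient $I_{J'}/\ker \phi_{\setof{I_{J'},\sigma}}$ is trivial when $J' \cap \sigma(J') = \emptyset$ and is isomorphic to $I_{J' \cap \sigma^{-1}(J')}$ otherwise. For the summand $J' = \interval{b}{d}$ associated with the chosen point $[b,d,i]$, the two conditions $J' \cap \sigma(J') \neq \emptyset$ and $J \neq \emptyset$ with $J := J' \cap \sigma^{-1}(J')$ are equivalent, since any $t \in J'$ with $\sigma(t) \in J'$ witnesses both. Because $\sigma$ is a translation map, $\sigma^{-1}(J')$ extends at least as far to the left as $J'$, so intersecting with $J'$ preserves the decorated left endpoint, yielding $\cB(J) = b$ and $J = \interval{b}{\cD(J)}$. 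By uniqueness of the interval decomposition (Theorem~\ref{thm:interval_decomp}), this establishes that $[b,\cD(J),i] \in \pdbf(V/\ker \phi_{\setof{V,\sigma}})$ precisely when $J \neq \emptyset$.

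Finally, the canonical surjection $V \twoheadrightarrow V/\ker \phi_{\setof{V,\sigma}}$ is an epimorphism, so Proposition~\ref{prop:match_p} applies and determines the matching $\cX_{s(V, V/\ker \phi_{\setof{V,\sigma}})}$ uniquely. The summand-wise correspondence $[b,d,i] \mapsto [b,\cD(J),i]$ induced by the direct-sum decomposition preserves the birth coordinate and preserves the left-handed ordering within each $\pdbf_b$-fibre (since $\cD(J)$ is monotone nondecreasing in $d$); these are the characterizing properties (ii)--(iv) of Proposition~\ref{prop:match_p}, so by uniqueness the two matchings agree. The main obstacle is the careful decorated-endpoint bookkeeping needed to confirm $\cB(J) = b$ in all open/closed cases; after that the argument is essentially a mechanical consequence of the abelian-category reduction together with the parallel Proposition~\ref{prop:w_sigma_barcode}.
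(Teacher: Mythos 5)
Your proposal is correct and takes essentially the same route as the paper's (deliberately omitted) argument: decompose $V$ into interval summands, apply Lemma~\ref{lem:w_sigma_barcode_interval}(ii) summand-by-summand, and pin down the matching via the uniqueness in Proposition~\ref{prop:match_p}, exactly mirroring the paper's proof of Proposition~\ref{prop:w_sigma_barcode} with Lemma~\ref{lem:w_sigma_barcode_interval}(i) and Proposition~\ref{prop:match_i}. The only point left slightly implicit (at roughly the same level of rigor as the paper) is that, for fixed $b$, the condition $\interval{b}{d}\cap\sigma^{-1}(\interval{b}{d})\neq\emptyset$ is monotone in $d$, so the matched points of $\pdbf_b(V)$ form precisely the initial segment in the left-handed order demanded by property (iii) of Proposition~\ref{prop:match_p}.
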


\begin{proof}[Proof of Theorem~\ref{thm:geninducedmatching}] Our proof closely follows the proof of the Induced Matching Theorem in \cite{bauer_lesnick}.  
To prove (i), we start by establishing the existence of certain matchings. By Lemma~\ref{lem:optimality_w_k}(i), $W^\sigma$ is a submodule of $\im \phi$ and so the matching $\cX_{i(W,\im \phi)}$ is defined. It follows from Lemma~\ref{lem:w_sigma_submodule} that  $W^\sigma$ is a submodule of  $W$ and thus $\cX_{i(W^\sigma,W)}$ is defined. 
Proposition~\ref{prop:functoriality} implies that the following diagram commutes.

\begin{center}
\setlength\mathsurround{0pt}
\begin{tikzcd}
& \pdbf(W) \arrow[dd, strike thru, hookleftarrow,"{\X_{i(\im \phi, W)}}"] \arrow [dr, strike thru, leftarrow, "\X_\phi"] & \\
\pdbf({W^\sigma}) \arrow[ur, strike thru, "{\X_{i(W^\sigma,W)}}", hookrightarrow] & & \pdbf({V}) \arrow[dl, strike thru, twoheadrightarrow, "{\X_{s(V,\im \phi)}}"] \\
& \pdbf({\im \phi}) \arrow[ul, strike thru, "{\X_{i(W^\sigma,\im \phi)}}", hookleftarrow] & \\
\end{tikzcd}
\end{center}

It follows from Proposition~\ref{prop:match_p}(i) that $\im\X_\phi = \im \X_{i(\im \phi, W)}$.  
By commutativity of the left triangle, $\im \X_{i(W^\sigma,W)} \subseteq \im \X_{i(\im \phi, W)}$. 
Now it follows from Proposition~\ref{prop:w_sigma_barcode} that $\im\X_\phi$ contains all $\sigma$-nontrivial points in $\pdbf(W)$.

To finish the proof of (i), we must show that if  $\X_\phi([b,d,i]) = [b',d',i']$,   then  $\interval{b}{d} \bb \sigma(\interval{b'}{d'})$. First we suppose that $[b',d',i']$ is $\sigma$-nontrivial.  In this case for  $J := \interval{b'}{d'} \cap  \conv{\sigma\interval{b'}{d'}}$,  the  point $([\cB(J),d',i'] \in \pdbf(W^\sigma)$  and $\X_{i(W^\sigma, W)}([\cB(J),d',i']) = [b',d',i']$. 
Since $\sigma$ is a translation map, we have $\interval{\cB(J)}{d'} \bb \sigma(\interval{b'}{d'})$. Due to commutativity of the above diagram, $\X_{i(W^\sigma,\im \phi)}([\cB(J),d',i']) = \cX_{s(V,\im \phi)}([b,d,i])$. It follows from Proposition~\ref{prop:match_i}(iv) and  Proposition~\ref{prop:match_p}(iv) that $b \leq \cB(J)$. Therefore,  $\interval{b}{d} \bb \sigma(\interval{b'}{d'})$.
 
On the other hand, if $[b',d',j]$ is $\sigma$-trivial, then $\interval{b'}{d'} \cap {\sigma(\interval{b'}{d'})} = \emptyset$ and every point in $\sigma(\interval{b'}{d'})$ is larger than $d'$. By Proposition~\ref{prop:admissible}, $b < d'$ and we get  $\interval{b}{d} \bb \sigma(\interval{b'}{d'})$.

The proof of (ii) is based on similar ideas combined with the commutativity of the diagram
\begin{center}
\setlength\mathsurround{0pt}
\begin{tikzcd}
& \pdbf(V) \arrow[dd, strike thru, twoheadrightarrow,"{\cX_{s(V,\im \phi)}}"] \arrow [dr, strike thru, rightarrow, "\X_\phi"] & \\
\pdbf({V/\ker \phi_{\setof{V,\sigma}}}) \arrow[ur, strike thru, "{\cX_{s(V,V/\ker \phi_{\setof{V,\sigma}})}}", twoheadleftarrow]  & & \pdbf({W}) \arrow[dl, strike thru, hookleftarrow, "{\X_{i(\im \phi, W)}}"] \\
& \pdbf({\im \phi}) \arrow[ul, strike thru, "{\cX_{s(\im \phi,V/\ker \phi_{\setof{V,\sigma}})}} ", twoheadrightarrow] & \\
\end{tikzcd}
\end{center}
and is left to the reader. 
\end{proof}

\section{Algebraic Stability Theorem for Generalized Interleavings}
\label{sec:algebraic_stability}
\label{sec::Stbility}
In this section we provide a generalization  of the Algebraic Stability Theorem of \cite{bauer_lesnick}. 

\begin{thm}
\label{thm:genalgstability}
Let $(V, \varphi_V)$ and $(W, \varphi_W)$ be PFD persistence modules such that 
$(V,W)$ are $(\tau, \sigma)$-interleaved via the morphisms $\phi : V \rightarrow W(\tau)$ and ${\psi : W \rightarrow V(\sigma)}$. 
There exists a matching $\X \colon \pdbf(V) \matching \pdbf(W)$ such that $\X([b,d,i]) = [b',d',i']$ implies 
\begin{equation}
(\sigma \circ \tau)^{-1}(\interval{b}{d}) \ba \tau^{-1}(\interval{b'}{d'}) \overlap \interval{b}{d} \bb \sigma \circ \tau \circ \tau^{-1}(\interval{b'}{d'}) \label{eq:ineq1}.
\end{equation}
Moreover, if $[b,d,i] \in \pdbf(V)$ is  unmatched, then it is $(\sigma \circ \tau)$-trivial, and if $[b',d',i'] \in \pdbf(W)$ is unmatched, then either $\interval{b'}{d'}\cap \im \tau = \emptyset$ or $\tau^{-1}(\interval{b'}{d'})$ is $(\sigma \circ \tau)$-trivial.
\end{thm}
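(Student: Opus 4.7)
\medskip

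The plan is to apply the Generalized Induced Matching Theorem (Theorem~\ref{thm:geninducedmatching}) to the interleaving morphism $\phi\colon V \to W(\tau)$ and then transport the resulting matching across the natural correspondence between $\pdbf(W(\tau))$ and $\pdbf(W)$.

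First I would invoke Proposition~\ref{prop:interleaving_kernel_cokernel}(i) to note that both $\ker\phi$ and $\coker\phi$ are $(\sigma\circ\tau)$-trivial. Applying Theorem~\ref{thm:geninducedmatching} with translation map $\sigma\circ\tau$ then produces a matching $\cX_\phi\colon \pdbf(V)\matching \pdbf(W(\tau))$ such that whenever $\cX_\phi([b,d,i]) = [b'',d'',i'']$,
\[
(\sigma\circ\tau)^{-1}(\interval{b}{d}) \ba \interval{b''}{d''} \overlap \interval{b}{d} \bb (\sigma\circ\tau)(\interval{b''}{d''}),
\]
where the middle overlap relation is Proposition~\ref{prop:admissible}. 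Moreover, unmatched points in $\pdbf(V)$ and in $\pdbf(W(\tau))$ are $(\sigma\circ\tau)$-trivial.

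Next I would make explicit the bijection between the interval decomposition of $W$ and that of $W(\tau)$. By Theorem~\ref{thm:interval_decomp}, if $W \cong \bigoplus_i I_{J_i}$, then since shifting by $\tau$ commutes with direct sums, $W(\tau) \cong \bigoplus_i I_{\tau^{-1}(J_i)}$, with the summands for which $\tau^{-1}(J_i)$ is empty (equivalently, $J_i \cap \im\tau = \emptyset$) being trivial. This defines a canonical injective matching $\cM\colon \pdbf(W)\matching \pdbf(W(\tau))$ whose domain is exactly the set of $[b',d',i']\in\pdbf(W)$ with $\interval{b'}{d'}\cap\im\tau \neq \emptyset$, and which sends such a point to the unique point of $\pdbf(W(\tau))$ with the same multiplicity index whose interval is $\tau^{-1}(\interval{b'}{d'})$. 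The decorated endpoints of $\tau^{-1}(\interval{b'}{d'})$ can be expressed via the promotion maps $\tau^{\star}, \tau^{\downarrow}, \tau^{\uparrow}$ of Definition~\ref{defn:promotions}, but for establishing the bounds only the underlying subset of $\R$ is needed.

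Finally I would define $\cX := \cM^{-1}\circ \cX_\phi\colon \pdbf(V)\matching \pdbf(W)$. If $\cX([b,d,i]) = [b',d',i']$, then $\cX_\phi([b,d,i]) = \cM([b',d',i'])$, whose interval is exactly $\tau^{-1}(\interval{b'}{d'})$; substituting $\interval{b''}{d''} = \tau^{-1}(\interval{b'}{d'})$ in the chain above gives \eqref{eq:ineq1}. For the unmatched clauses: an unmatched $[b,d,i]\in\pdbf(V)$ is unmatched by $\cX_\phi$ and is therefore $(\sigma\circ\tau)$-trivial; an unmatched $[b',d',i']\in\pdbf(W)$ either fails to lie in $\dom\cM$ (so $\interval{b'}{d'}\cap\im\tau = \emptyset$) or lies in $\dom\cM$ but $\cM([b',d',i'])$ is unmatched by $\cX_\phi$, in which case $\tau^{-1}(\interval{b'}{d'})$ is $(\sigma\circ\tau)$-trivial. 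The main obstacle is the bookkeeping for $\cM$: one must verify that $\tau^{-1}$ of a (possibly half-open, possibly unbounded) interval is again an interval and that the decorated endpoints and multiplicities carry over correctly, which is what the Galois-connection machinery of Section~\ref{subsec:2-preliminaries-GaloisConnections} is designed to handle; once this correspondence is in hand, the theorem reduces cleanly to Theorem~\ref{thm:geninducedmatching}.
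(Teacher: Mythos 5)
Your proposal is correct and follows essentially the same route as the paper: apply Proposition~\ref{prop:interleaving_kernel_cokernel}(i) and Theorem~\ref{thm:geninducedmatching} to $\phi\colon V\to W(\tau)$ to get $\cX_\phi$ with the chain of relations (the middle overlap via Proposition~\ref{prop:admissible}), then compose with the canonical correspondence between $\pdbf(W(\tau))$ and the points of $\pdbf(W)$ whose intervals meet $\im\tau$, which replaces $\interval{x}{y}$ by $\tau^{-1}(\interval{b'}{d'})$ and yields both \eqref{eq:ineq1} and the two unmatched-point clauses. Your $\cM$ is just the inverse of the matching $\X'$ used in the paper, so the arguments coincide.
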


As in the case of the Induced Matching Theorem, the result of \cite{bauer_lesnick} follows from setting $\tau(t) = \sigma(t) = t + \delta $ for $\delta \geq 0$.

\begin{proof}
It follows from Propostion~\ref{prop:interleaving_kernel_cokernel}(i) that $\ker \phi$ and $\coker \phi$ are $(\sigma \circ \tau)$-trivial. By Theorem~\ref{thm:geninducedmatching}, the domain of  $\X_\phi \colon \pdbf(V) \matching \pdbf(W(\tau))$ contains all $\sigma\text{-nontrivial points}$ in $\pdbf(V)$, and its image contains all $\sigma\text{-nontrivial points}$ in $\pdbf(W(\tau))$. Now suppose that $\X_\phi([b,d,i]) = [x,y,j]$. Then 
\begin{equation}\label{eqn::matchXphi}
(\sigma \circ \tau)^{-1}(\interval{b}{d}) \ba \interval{x}{y} \overlap \interval{b}{d} \bb \sigma \circ \tau \interval{x}{y},
\end{equation}
where the first and the last relations follow from Theorem~\ref{thm:geninducedmatching} and the middle one from Proposition~\ref{prop:admissible}.

To finish the proof, we build an appropriate matching $\X' \colon \pdbf(W(\tau)) \matching \pdbf(W)$. It follows from the definition of $W(\tau)$ that there is a one-to-one correspondence between the  points in $\pdbf(W(\tau))$ and the set $\setof{[b',d',i'] \in \pdbf(W) \colon \interval{b'}{d'} \cap \im \tau \neq \emptyset}$. This correspondence can be realized by a matching $\X' \colon \pdbf(W(\tau)) \matching \pdbf(W)$ such that $\X'([x,y,j]) = [b',d',i']$ implies $\interval{x}{y} = \tau^{-1}(\interval{b'}{d'})$. Thus, the desired matching $\X$ is obtained by the composition  $\X' \circ \X_\phi$. Condition~(\ref{eq:ineq1}) follows from (\ref{eqn::matchXphi}) and the fact that $\interval{x}{y} = \tau^{-1}(\interval{b'}{d'})$.
\end{proof}

Statement~\eqref{eq:ineq1}  in Theorem~\ref{thm:genalgstability} may seem impractical. However, it can be rewritten as a set of inequalities concerning the endpoints of the intervals,  and if the translation maps $\tau$ and $\sigma$ are bijective, then the inequalities can be  simplified considerably. 

\begin{cor}
\label{cor:genalgstab_invertible}
Let $(V, \varphi_V)$ and $(W, \varphi_W)$ be persistence modules such that  $(V,W)$ are $(\tau, \sigma)$-interleaved.  Suppose that $\X([b,d,i]) = [b',d',i']$, where $\X$ is the matching given by Theorem~\ref{thm:genalgstability}.  Condition~\eqref{eq:ineq1} implies that the inequalities: 
\[
\tau^\star (b')  \leq b, \quad \tau^\star (d')  \leq d,
\]
\[ 
\sigma^{\star}(b) \leq \tau^\uparrow \circ \tau^\star(b'), \quad \sigma^\star(d) \leq \tau^\uparrow \circ \tau^\star (d')
\]
hold. Moreover, if the maps $\tau$ and $\sigma$ are bijections, then the above inequalities reduce to 
\[
\tau^\star (b') \leq b, \quad \tau^\star(d') \leq d,
\]
\[ 
\sigma^\star (b') \leq b, \quad \sigma^\star(d') \leq d,
\]
and we have that if $[b,d,i] \in \pdbf(V)$ is  unmatched, then it is $(\sigma \circ \tau)$-trivial, and if $[b',d',i'] \in \pdbf(W)$ is unmatched, then it is $(\tau \circ \sigma)$-trivial.
\end{cor}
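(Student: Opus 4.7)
The plan is to systematically translate the three primitive relations appearing in Condition~\eqref{eq:ineq1} into decorated-endpoint inequalities, and then to use the Galois connections from Section~\ref{subsec:2-preliminaries-GaloisConnections} to rearrange into the stated form. The key preliminary step is to observe that for any monotone $\sigma$ and decorated endpoint $e$ one has $\sigma^{-1}(\bra{e}) = \bra{\sigma^\star(e)}$ and $\sigma^{-1}(\ket{e}) = \ket{\sigma^\star(e)}$ directly from Definition~\ref{defn:promotions}; combined with $(\sigma \circ \tau)^\star = \tau^\star \circ \sigma^\star$ from Proposition~\ref{prop:promotiondistribution}, this yields
\[
\tau^{-1}(\interval{b'}{d'}) = \interval{\tau^\star(b')}{\tau^\star(d')} \quad \text{and} \quad (\sigma \circ \tau)^{-1}(\interval{b}{d}) = \interval{\tau^\star \sigma^\star(b)}{\tau^\star \sigma^\star(d)}.
\]
For nonempty real intervals $A, B$, the relation $A \bb B$ amounts to the inequality ``left decorated endpoint of $A$ is $\leq$ left decorated endpoint of $B$'', while $A \ba B$ is the analogous right-endpoint inequality.

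With this dictionary in hand, the first two inequalities $\tau^\star(b') \leq b$ and $\tau^\star(d') \leq d$ will fall out of the middle relation $\tau^{-1}(\interval{b'}{d'}) \overlap \interval{b}{d}$ of Condition~\eqref{eq:ineq1}. For the fourth inequality, the outer relation $(\sigma \circ \tau)^{-1}(\interval{b}{d}) \ba \tau^{-1}(\interval{b'}{d'})$ yields $\tau^\star \sigma^\star(d) \leq \tau^\star(d')$, and the Galois connection $(\tau^\star, \tau^\uparrow)$ of Proposition~\ref{prop:endpointgaloisconnections} converts this into $\sigma^\star(d) \leq \tau^\uparrow \circ \tau^\star(d')$. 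For the third inequality, I would rewrite $\interval{b}{d} \bb \sigma \circ \tau \circ \tau^{-1}(\interval{b'}{d'})$ as $(\sigma \circ \tau)(\tau^{-1}(\interval{b'}{d'})) \subseteq \bra{b}$, then use the set-theoretic adjunction $f(A) \subseteq B \Leftrightarrow A \subseteq f^{-1}(B)$ to obtain $\tau^{-1}(\interval{b'}{d'}) \subseteq (\sigma \circ \tau)^{-1}(\bra{b}) = \bra{\tau^\star \sigma^\star(b)}$, whose left-endpoint comparison is $\tau^\star \sigma^\star(b) \leq \tau^\star(b')$; one more application of the Galois connection produces $\sigma^\star(b) \leq \tau^\uparrow \circ \tau^\star(b')$.

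For the bijective case, the crucial observation is that when $\tau$ is a monotone bijection, a direct computation from Definition~\ref{defn:promotions} on decorated points gives $\tau^\star(t^\pm) = \tau^{-1}(t)^\pm$ and $\tau^\uparrow(t^\pm) = \tau(t)^\pm$, so that $\tau^\uparrow \circ \tau^\star = \mathrm{id}_\E$. The third and fourth inequalities therefore collapse to the announced bijective-case bounds on the primed endpoints. The statements about unmatched points are drawn directly from Theorem~\ref{thm:genalgstability}: an unmatched $[b,d,i] \in \pdbf(V)$ is already $(\sigma \circ \tau)$-trivial by that theorem; for an unmatched $[b',d',i'] \in \pdbf(W)$, bijectivity of $\tau$ rules out $\interval{b'}{d'} \cap \im \tau = \emptyset$, so the only remaining alternative is that $\tau^{-1}(\interval{b'}{d'})$ is $(\sigma \circ \tau)$-trivial; applying the bijection $\tau$ to the defining equality $\tau^{-1}(\interval{b'}{d'}) \cap (\sigma \circ \tau)(\tau^{-1}(\interval{b'}{d'})) = \emptyset$ then yields $\interval{b'}{d'} \cap (\tau \circ \sigma)(\interval{b'}{d'}) = \emptyset$, the required $(\tau \circ \sigma)$-triviality.

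The hardest part will be the careful bookkeeping between set-theoretic inclusions and decorated-endpoint inequalities; in particular, in deriving the third inequality, the non-surjectivity of $\tau$ (in the general case) prevents a symmetric set-theoretic rewrite, and the Galois-connection identity $\tau^\star(x) \leq \tau^\star(y) \Leftrightarrow x \leq \tau^\uparrow \circ \tau^\star(y)$ is precisely the device that absorbs this asymmetry. The bijective-case simplification $\tau^\uparrow \circ \tau^\star = \mathrm{id}_\E$ then collapses this extra structure and delivers the clean symmetric form of the bounds.
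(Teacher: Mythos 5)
Your proposal is correct and takes essentially the same route as the paper's proof: both translate Condition~\eqref{eq:ineq1} into decorated-endpoint inequalities using $\tau^\star$, $\tau^\uparrow$ and the Galois connections of Section~\ref{subsec:2-preliminaries-GaloisConnections}, collapse them via $\tau^\uparrow \circ \tau^\star = \mathrm{id}$ when $\tau$ and $\sigma$ are bijections, and handle unmatched points exactly as you do, by combining Theorem~\ref{thm:genalgstability} with surjectivity and injectivity of $\tau$. One caveat, shared with the paper's own (equally terse) proof: the collapse actually yields $\sigma^\star(b) \leq b'$ and $\sigma^\star(d) \leq d'$ rather than the literally stated $\sigma^\star(b') \leq b$, $\sigma^\star(d') \leq d$ --- the stated form appears to be a typo, since only the former is consistent with Proposition~\ref{prop:Matching} and Corollary~\ref{cor:genalgstab_invertible_pi} --- so your phrase ``collapse to the announced bijective-case bounds'' should be read with that correction in mind rather than as a derivation of the misprinted inequalities.
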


\begin{proof}
We start by showing that the first set of inequalities follows from Condition~\eqref{eq:ineq1}. The relation  $\tau^{-1}(\interval{b'}{d'}) \overlap \interval{b}{d}$ together with the monotonicity of $\tau$ implies that $\interval{b'}{d'}\overlap \tau(\interval{b}{d})$, and by definition  we thus have
\[
\interval{b'}{d'} \bb \tau(\interval{b}{d}) \text{ and } \interval{b'}{d'} \ba \tau(\interval{b}{d}).
\]
The inequality $\tau^\star (b')  \leq b$ ($\tau^\star (d')  \leq d$) follows from definition of $\tau^\star$, and the first (second) relation stated above. Now we prove that $\sigma^{\star}(b) \leq \tau^\uparrow \circ \tau^\star(b')$. Starting from the relation $\interval{b}{d} \bb \sigma \circ \tau \circ \tau^{-1}(\interval{b'}{d'})$, we obtain that 
\[
\interval{b}{d} \bb {\uparrow}\setof{\sigma \circ \tau \circ \tau^{-1}(\interval{b'}{d'})}  = \langle (\sigma\circ\tau)^\uparrow \circ \tau^{\star}(b'), \infty \rangle.
\] 
Hence $b \leq (\sigma\circ\tau)^\uparrow \circ \tau^{\star}(b')$, and the desired inequality follows from the definition of $\sigma^\star$. The last inequality follows again from the definitions of $\sigma^{\star}, \tau^\uparrow,$ and $\tau^\star$, and the relation $(\sigma \circ \tau)^{-1}(\interval{b}{d}) \bb \tau^{-1}(\interval{b'}{d'})$ is guaranteed by Condition~\eqref{eq:ineq1}. 

Now the moreover part. We suppose that $\tau$ and $\sigma$ are invertible. Combining the invertibility of $\tau$ with the definitions of $\tau^{\uparrow}$ and $\tau^{\star}$ (i.e. Definition~\ref{defn:promotions}), it is routine to verify  $\tau^{\uparrow}\circ \tau^\star = \mbox{id}$. Similarly, $\sigma^{\uparrow} \circ \sigma^{\star} = \mbox{id}$. The second set of inequalities follows. The statement that if $[b,d,i] \in \pdbf(V)$ is  unmatched, then it is $(\sigma \circ \tau)$-trivial carries over from Theorem~\ref{thm:genalgstability}. The statement that if $[b',d',i'] \in \pdbf(W)$ is unmatched, then it is $(\tau \circ \sigma)$-trivial follows from Theorem~\ref{thm:genalgstability} as well, observing that the $\interval{b'}{d'}\cap \im \tau = \emptyset$ case cannot happen when $\tau$ is surjective, and that the condition that $\tau^{-1}(\interval{b'}{d'})$ is $(\sigma \circ \tau)$-trivial is equivalent to $\interval{b'}{d'}$ being $(\tau \circ \sigma)$-trivial when $\tau$ is injective. 
\end{proof}

Most of the algorithms for computing persistence diagrams   do not store information about decorations of the endpoints, and they produce undecorated persistence diagrams. To define undecorated persistence diagrams we will use the  maps introduced in Definition~\ref{def::EtoRmaps}.

\begin{defn}
\label{defn::pdbfTopd}
Let $V$ be a PDF persistence module and $\pdbf(V)$ its persistence diagram. The \emph{undecorated persistence diagram} $\pd(V)$ of $V$ is a subset of $\overline{\R}^2 \times \N_{> 0}$  such that there exists a bijection $\X_V \colon \pdbf(V) \matching \pd(V)$ with the following property: if $\X_V([b,d,i]) = [b',d',i']$, then $b' = \pi( b)$ and $d' = \pi( d)$.
\end{defn}

To formulate an analog of Theorem~\ref{thm:genalgstability} for undecorated persistence diagrams we make use of the following functions. 

\begin{defn}
\label{defn:leftandrightinverse}
Let $\tau : \R \to \R$ be a monotone function. We define monotone functions $\tau_L, \tau^{\dagger}_L, \tau_R, \tau^{\dagger}_R : \overline{\R} \to \overline{\R}$ as:
\begin{enumerate}
\item $\tau_L(\pm\infty) = \lim_{x \to \pm\infty}\tau(x)$ and $\tau_L(x) := \lim_{y \to x^-} \tau(y)$ for $x \in \R$.  
\item $\tau_R(\pm\infty) = \lim_{x \to \pm\infty}\tau(x)$ and $\tau_R(x) := \lim_{y \to x^+} \tau(y)$ for $x \in \R$.
\item $\tau^{\dagger}_L(\pm\infty) := \pm\infty$ and $ \tau^{\dagger}_L(x) =  \inf\setof{ y \colon \tau(y) > x }$ for $x \in \R$.
\item $\tau^{\dagger}_R(\pm\infty) := \pm\infty$ and  $\tau^{\dagger}_R(x) :=  \inf\setof{ y \colon \tau(y) \geq x }$ for $x \in \R$.
\end{enumerate}
\end{defn}

\begin{prop}
\label{prop:leftandrightinverseconnections}
Let $\tau : \R \to \R$ be a monotone function. We have that $\tau_L = (\tau^{\downarrow})_-$, $\tau_R = (\tau^{\uparrow})_+$, $\tau^{\dagger}_L = (\tau^{\star})_+$, and $\tau^{\dagger}_R = (\tau^{\star})_-$.  Moreover, the pair $(\tau^{\dagger}_R, \tau_R)$ is a Galois connection and the pair $(\tau_L, \tau^{\dagger}_L)$ is a Galois connection.
\end{prop}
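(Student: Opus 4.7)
The plan is to first establish the four functional identities by unwinding the definitions of $\tau^\downarrow$, $\tau^\uparrow$, and $\tau^\star$ from Definition~\ref{defn:promotions} together with the demotion $f_\pm = \pi\circ f\circ i^\pm$ from Definition~\ref{defn:demotions}. The two Galois connection statements then follow almost formally from results already established in Section~\ref{subsec:2-preliminaries-GaloisConnections}.

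I would begin with $\tau_L = (\tau^\downarrow)_-$. For $x\in\R$, compute $(\tau^\downarrow)_-(x) = \pi(\tau^\downarrow(x^-))$; by the definition of $\tau^\downarrow$, the lower set $\ket{\tau^\downarrow(x^-)}$ equals ${\downarrow}\{\tau(y):y<x\}$, whose supremum is precisely $\lim_{y\to x^-}\tau(y) = \tau_L(x)$. Applying $\pi$ strips the $\pm$ decoration (which records whether the supremum is attained) and returns $\tau_L(x)$. The boundary cases $x=\pm\infty$ are handled using $\ket{-\infty} = \emptyset$ and $\ket{+\infty} = \R$, yielding the prescribed limits. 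The identity $\tau_R = (\tau^\uparrow)_+$ is handled symmetrically, using $\bra{x^+} = (x,\infty)$ and the infimum of ${\uparrow}\{\tau(y) : y > x\}$, which is exactly $\tau_R(x)$.

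For the two dagger identities $\tau^\dagger_L = (\tau^\star)_+$ and $\tau^\dagger_R = (\tau^\star)_-$, the key observation is that for $x\in\R$, $\ket{\tau^\star(x^+)} = \tau^{-1}((-\infty,x]) = \{y : \tau(y) \leq x\}$ and $\ket{\tau^\star(x^-)} = \tau^{-1}((-\infty,x)) = \{y : \tau(y) < x\}$. By the Dedekind-cut duality for monotone $\tau$, the supremum of the first set equals $\inf\{y : \tau(y) > x\} = \tau^\dagger_L(x)$, and the supremum of the second equals $\inf\{y : \tau(y) \geq x\} = \tau^\dagger_R(x)$. Applying $\pi$ in each case strips the decoration and the identities follow.

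Once all four identities are established, the Galois connection statements follow by combining Proposition~\ref{prop:endpointgaloisconnections}, which provides that $(\tau^\downarrow, \tau^\star)$ and $(\tau^\star, \tau^\uparrow)$ are Galois connections, with Proposition~\ref{prop:demotionconnection}, which preserves the Galois property under the demotion $(f,g)\mapsto (f_-, g_+)$. Applied to the first pair, this gives that $(\tau_L, \tau^\dagger_L) = ((\tau^\downarrow)_-, (\tau^\star)_+)$ is a Galois connection; applied to the second pair, it gives that $(\tau^\dagger_R, \tau_R) = ((\tau^\star)_-, (\tau^\uparrow)_+)$ is a Galois connection. The main obstacle I anticipate is the careful bookkeeping of the $\pm$ decorations arising from whether a given sup or inf is attained, and checking the $\pm\infty$ boundary cases consistently against the definitions of $\tau_L(\pm\infty)$ and $\tau_R(\pm\infty)$; these verifications are routine but easy to botch unless one proceeds systematically.
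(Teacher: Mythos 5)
Your proposal is correct and follows essentially the same route as the paper: the paper omits the four identities as a "routine verification" (which you spell out by unwinding Definitions~\ref{defn:promotions} and \ref{defn:demotions}), and then, exactly as you do, obtains the two Galois connections by combining Proposition~\ref{prop:endpointgaloisconnections} with Proposition~\ref{prop:demotionconnection}.
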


\begin{proof}
To establish the first part of the result, one performs a routine verification (which we omit) that the functions defined in Definition~\ref{defn:leftandrightinverse} could have been alternatively defined using the concepts in Definition~\ref{defn:promotions} and Definition~\ref{defn:demotions} according to the formulas given. 
Now the moreover part. Since $(\tau_L, \tau^{\dagger}_L) = ((\tau^{\downarrow})_-, (\tau^{\star})_+)$ and $(\tau^{\dagger}_R, \tau_R) = ((\tau^{\star})_-, (\tau^{\uparrow})_+)$, the result follows from Proposition~\ref{prop:endpointgaloisconnections} and   Proposition~\ref{prop:demotionconnection}.
\end{proof}

\begin{prop}
\label{prop:Matching}
Let $(V, \varphi_V)$ and $(W, \varphi_W)$ be persistence modules such that 
$(V,W)$ are $(\tau, \sigma)$-interleaved. Then there exists a matching $ \X \colon \pd(V) \matching \pd(W)$ with the following properties. If $\X( [b, d,i ]) = [ b', d', i' ]$, then
\begin{align*}
\tau^{\dagger}_R (b') &\leq b \leq (\sigma_R \circ \tau_R \circ \tau^{\dagger}_L) (b'),\\
\tau^{\dagger}_R (d') &\leq d \leq (\sigma_R \circ \tau_R \circ \tau^{\dagger}_L) (d'),\\
(\tau_L \circ \tau^{\dagger}_R \circ \sigma^{\dagger}_R)(b) &\leq b' \leq \tau_R (b), \\
(\tau_L \circ \tau^{\dagger}_R \circ \sigma^{\dagger}_R)(d) &\leq d' \leq \tau_R (d).
\end{align*}
Moreover, if $[b,d,i] \in \pd(V)$ is unmatched, then
\[
 d \leq (\sigma_R \circ \tau_R)(b),
\] 
and  if $[b',d',i'] \in \pd(W)$ is unmatched, then 
\[
d' \leq (\tau_R \circ \sigma_R \circ \tau_R \circ \tau^{\dagger}_L) (b').
\]
\end{prop}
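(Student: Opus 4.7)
The plan is to lift the decorated matching and inequalities provided by Theorem~\ref{thm:genalgstability} and the first half of Corollary~\ref{cor:genalgstab_invertible} to the undecorated diagrams through the bijections $\X_V, \X_W$ of Definition~\ref{defn::pdbfTopd}, then to translate each $\E$-valued inequality into an $\overline{\R}$-valued inequality by projecting via $\pi$. Concretely, I would set $\X := \X_W \circ \tilde{\X} \circ \X_V^{-1}$, where $\tilde{\X} \colon \pdbf(V) \matching \pdbf(W)$ is the matching supplied by Theorem~\ref{thm:genalgstability}; each matched pair $\X([b,d,i]) = [b',d',i']$ then arises from a decorated matched pair $\tilde{\X}([B,D,i]) = [B',D',i']$ with $b = \pi(B)$, $d = \pi(D)$, $b' = \pi(B')$, $d' = \pi(D')$, and by Corollary~\ref{cor:genalgstab_invertible} the decorated pairs satisfy $\tau^\star(B') \leq B$, $\tau^\star(D') \leq D$, $\sigma^\star(B) \leq \tau^\uparrow \circ \tau^\star(B')$, and $\sigma^\star(D) \leq \tau^\uparrow \circ \tau^\star(D')$.

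The four inequalities bounding $b$ and $d$ in terms of $b'$ and $d'$ would then be deduced by projecting to $\overline{\R}$, using Proposition~\ref{prop:galoisinclusionprojection} (which provides $i^- \circ \pi \leq \mathrm{id} \leq i^+ \circ \pi$ and $\pi \circ i^\pm = \mathrm{id}$) together with Proposition~\ref{prop:leftandrightinverseconnections} (which identifies $\tau^{\dagger}_R = (\tau^\star)_-$, $\tau^{\dagger}_L = (\tau^\star)_+$, $\tau_R = (\tau^\uparrow)_+$, and analogously for $\sigma$). For instance, $\tau^\star(B') \leq B$ combined with $i^-(b') \leq B'$ (from $i^- \circ \pi \leq \mathrm{id}$) and monotonicity of $\tau^\star$ yields $\tau^\star(i^-(b')) \leq B$, and applying $\pi$ produces $\tau^{\dagger}_R(b') \leq b$. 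For the upper bound $b \leq \sigma_R \circ \tau_R \circ \tau^{\dagger}_L(b')$, I would first use the Galois connection $(\sigma^\star, \sigma^\uparrow)$ (Proposition~\ref{prop:endpointgaloisconnections}) to recast $\sigma^\star(B) \leq \tau^\uparrow \circ \tau^\star(B')$ as $B \leq \sigma^\uparrow \circ \tau^\uparrow \circ \tau^\star(B')$, then inflate the right side using $\mathrm{id} \leq i^+ \circ \pi$ between each promotion and $B' \leq i^+(b')$ at the end, before projecting with $\pi$. The $d$-inequalities are identical. The second block of four inequalities (bounding $b'$ and $d'$) is obtained by chaining the Galois connections $(\sigma^{\dagger}_R, \sigma_R)$, $(\tau^{\dagger}_R, \tau_R)$, and $(\tau_L, \tau^{\dagger}_L)$ of Proposition~\ref{prop:leftandrightinverseconnections} through the first block to invert each composite.

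For the unmatched points I would use the second half of Theorem~\ref{thm:genalgstability}: an unmatched $[B,D,i] \in \pdbf(V)$ is $(\sigma \circ \tau)$-trivial, so $\interval{B}{D} \cap (\sigma \circ \tau)(\interval{B}{D}) = \emptyset$, and a right-limit argument at the left endpoint (using the definitions of $\tau_R$ and $\sigma_R$) yields $d \leq (\sigma_R \circ \tau_R)(b)$. For unmatched $[B',D',i'] \in \pdbf(W)$, Theorem~\ref{thm:genalgstability} offers the dichotomy $\interval{B'}{D'} \cap \im \tau = \emptyset$ or $\tau^{-1}(\interval{B'}{D'})$ is $(\sigma \circ \tau)$-trivial; in the second case, the previous argument applied to the pulled-back interval bounds $\tau^{\dagger}_L(d')$ in terms of $\sigma_R \circ \tau_R$ of $\tau^{\dagger}_L(b')$, and pushing the result forward by $\tau_R$ gives the stated $\tau_R \circ \sigma_R \circ \tau_R \circ \tau^{\dagger}_L$ bound; the first case pins $\interval{B'}{D'}$ inside a gap of $\im \tau$ whose size is controlled by the relationship between $\tau_L$, $\tau_R$, and $\tau^{\dagger}_L$, and is absorbed by the same expression. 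The principal obstacle I anticipate is the bookkeeping of $i^-$ versus $i^+$ choices when projecting through $\pi$: each of the eight matched-pair inequalities demands the correct decoration, and the correct choice is precisely what distinguishes $\tau^{\dagger}_R$ from $\tau^{\dagger}_L$ (and similarly for $\sigma$); the Galois-connection machinery of Section~\ref{subsec:2-preliminaries-GaloisConnections} makes these choices canonical but the verification is detail-heavy, particularly for the unmatched case in $\pd(W)$ where the gap analysis requires care.
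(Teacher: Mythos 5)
Your proposal is correct and follows essentially the same route as the paper: compose the decorated matching of Theorem~\ref{thm:genalgstability} with the bijections $\X_V,\X_W$ of Definition~\ref{defn::pdbfTopd}, project the decorated inequalities of Corollary~\ref{cor:genalgstab_invertible} to $\overline{\R}$ via $\pi$ and $i^{\pm}$ using Proposition~\ref{prop:leftandrightinverseconnections}, recover the companion inequalities through the Galois connections, and treat unmatched points via $(\sigma\circ\tau)$-triviality; the only (immaterial) difference is which four of the eight inequalities you prove directly before Galois-flipping. Your sketch of the unmatched-$\pd(W)$ case, including the $\interval{b'}{d'}\cap\im\tau=\emptyset$ gap analysis, is sound and in fact supplies detail the paper leaves to the reader.
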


\begin{proof}
We consider the matching $\X \colon \pd(V) \matching \pd(W)$ defined by 
\[
\X := \X_W \circ \X' \circ \X^{-1}_V, 
\]
where $\X_V, \X_W$ are bijections given by Definition~\ref{defn::pdbfTopd} and $\X'$ is the matching from Theorem~\ref{thm:genalgstability}. We start by proving the inequalities for the end points. We only need to show that  $\tau^{\dagger}_R (b')  \leq b$, $\tau^{\dagger}_R (d') \leq d$, $(\tau_L \circ \tau^{\dagger}_R \circ \sigma^{\dagger}_R)(b) \leq b'$, and $(\tau_L \circ \tau^{\dagger}_R \circ \sigma^{\dagger}_R)(d) \leq d'$ since by Proposition~\ref{prop:leftandrightinverseconnections} the other inequalities can be recovered using Galois connections (e.g. $\tau^{\dagger}_R (b') \leq b$ if and only if $b' \leq \tau_R(b)$). 

We only prove $\tau^{\dagger}_R (b')  \leq b$ since the rest  can be obtain by using similar arguments. Let $[ c, e, j] = \X_V^{-1}([b,d,i])$ and $[c', e', j'] = \X_W^{-1}([b',d',i'])$. Then $\X'([c, e, j]) = [ c', e', j']$. It follows from Corollary~\ref{cor:genalgstab_invertible} that $\tau^\star (c') \leq c$, and so $\pi \circ \tau^\star (c') \leq  \pi (c)$. Since $i^- \circ \pi(x) \leq \text{id}(x)$ for all $x \in \E$, we get $\pi \circ \tau^\star \circ i^- \circ \pi (c')  \leq  \pi (c)$. By Proposition~\ref{prop:leftandrightinverseconnections}, we get  $\tau^\dagger_R = \pi \circ \tau^\star \circ i^-$, which implies that $\tau^\dagger_R(\pi(c')) \leq \pi(c)$. It follows from the definition of $\X_V$ and $\X_W$ that $b'=\pi(c')$ and $b = \pi(c)$. Combining this with the previous inequality  yields $\tau^{\dagger}_R (b')  \leq b$.

Now we assume that  $[b,d,i] \in \pd(V)$ is  not in $\dom \X$. Again, let $[c,e,j] = \X^{-1}_V([b,d,i])$. By  Theorem~\ref{thm:genalgstability}, the point $[c, e, j]$ is $(\sigma \circ \tau)$-trivial, i.e. $e \leq \sigma^\uparrow \circ \tau^\uparrow (c)$.
Since $\text{id}(x) \leq i^+ \circ \pi(x)$ for all $x \in \E$,  we have $\pi(e) \leq \pi \circ \sigma^\uparrow \circ (i^+ \circ \pi) \circ \tau^\uparrow \circ (i^+ \circ \pi) (c)$.   Proposition~\ref{prop:leftandrightinverseconnections} implies that $\pi(e) \leq \sigma_R \circ \tau_R (\pi(c))$. By using $\pi(c) = b$ and $\pi(c') = b'$, we obtain that  $d \leq \sigma_R \circ \tau_R (b)$. The inequality for the points $[b',d',i'] \in \pd(W)$ that are  not in $\im \X$ can be achieved by using similar methods as above  and is left to the reader. 
\end{proof}

\begin{cor}
\label{cor:genalgstab_invertible_pi}
Let $(V, \varphi_V)$ and $(W, \varphi_W)$ be persistence modules such that $(V,W)$ are $(\tau, \sigma)$-interleaved. 
Suppose that  the maps $\tau$ and $\sigma$ are bijections. If $\X \colon \pd(V) \to \pd(W)$ is the matching given by Proposition~\ref{prop:Matching} and $\X([ b, d,i ]) = [ b', d', i']$, then
\[
\sigma^{-1}(b) \leq b' \leq {\tau}(b) \quad \text{and} \quad {\sigma}^{-1}(d) \leq d' \leq {\tau}(d).
\]
If $[b,d,i] \in \pd(V)$ is  unmatched, then  $d \leq (\sigma \circ \tau)(b),$ and  if $[b',d',i'] \in \pd(W)$ is  not in $\im \X$, then $d' \leq (\tau \circ \sigma) (b').$
\end{cor}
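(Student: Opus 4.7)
The plan is to derive this corollary directly from Proposition~\ref{prop:Matching} by showing that, under the bijectivity hypothesis, the four auxiliary maps $\tau_L, \tau_R, \tau^\dagger_L, \tau^\dagger_R$ collapse to just two familiar maps: $\tau$ itself and $\tau^{-1}$ (and similarly for $\sigma$). The matching $\X$ will be exactly the one produced by Proposition~\ref{prop:Matching}, and the work is purely to simplify the inequalities it yields.

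The first step is to observe that a monotone bijection $\tau \colon \R \to \R$ is automatically strictly monotone, and since $\R$ is connected and $\tau$ is surjective, $\tau$ must be continuous. Inspecting Definition~\ref{defn:leftandrightinverse}, this continuity forces $\tau_L(x) = \tau_R(x) = \tau(x)$ for all $x \in \R$, and the limits at $\pm\infty$ extend consistently. For the dagger maps, I would note that $\{y : \tau(y) > x\}$ and $\{y : \tau(y) \geq x\}$ are both equal to the half-line $(\tau^{-1}(x), \infty)$ or $[\tau^{-1}(x), \infty)$ respectively (using strict monotonicity and surjectivity), so their infima coincide and both equal $\tau^{-1}(x)$. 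Thus $\tau^\dagger_L = \tau^\dagger_R = \tau^{-1}$, and the analogous identities hold for $\sigma$.

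Next, I would substitute these identities into the four chains of inequalities provided by Proposition~\ref{prop:Matching}. The outer bounds become
\[
\tau^{-1}(b') \leq b \leq (\sigma \circ \tau \circ \tau^{-1})(b') = \sigma(b'),
\]
and rearranging the left inequality by applying the bijection $\tau$ (and the right one by applying $\sigma^{-1}$) produces the advertised bounds $\sigma^{-1}(b) \leq b' \leq \tau(b)$. The same argument applied to the $d$-coordinate gives $\sigma^{-1}(d) \leq d' \leq \tau(d)$. For the unmatched-point statements, substitution gives $d \leq (\sigma_R \circ \tau_R)(b) = (\sigma \circ \tau)(b)$ directly, and for the $\pd(W)$ side the composition $\tau_R \circ \sigma_R \circ \tau_R \circ \tau^\dagger_L$ telescopes to $\tau \circ \sigma \circ \tau \circ \tau^{-1} = \tau \circ \sigma$, yielding $d' \leq (\tau \circ \sigma)(b')$.

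There is no substantial obstacle; the only point requiring a moment of care is the justification that a monotone bijection of $\R$ is continuous (so that $\tau_L = \tau_R = \tau$), together with the compatible handling of the $\pm\infty$ cases. Once those identifications are in place, the entire corollary is a mechanical rewriting of Proposition~\ref{prop:Matching}.
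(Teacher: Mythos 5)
Your proposal is correct and follows essentially the same route as the paper: the paper's proof simply observes that invertibility forces $\tau_L = \tau_R = \tau$, $\tau^\dagger_L = \tau^\dagger_R = \tau^{-1}$ (and likewise for $\sigma$), and then evaluates the expressions in Proposition~\ref{prop:Matching}. Your additional justification that a monotone bijection of $\R$ is continuous, together with the consistent treatment of $\pm\infty$, just fills in details the paper leaves as routine verification.
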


\begin{proof}
If $\tau$ and $\sigma$ are invertible, then  $\tau_L = \tau_R = \tau$, $\sigma_L = \sigma_R = \sigma$, $\tau^\dagger_L = \tau^\dagger_R = \tau^{-1}$, and $\sigma^\dagger_L = \sigma^\dagger_R = \sigma^{-1}$. The proof is obtained by  evaluating    expressions in Proposition~\ref{prop:Matching}.
 \end{proof}

\section{Applications}
\label{sec:examples}

We illustrate the use of results obtained in Section~\ref{sec::Stbility}  through a series of applications.
Our first example  examines the relationship between $\mathbb{Z}$-indexed and $\mathbb{R}$-indexed persistence modules.
The second example focuses on obtaining bounds on errors that arise from computational limitations to obtaining the true persistence diagrams for large point clouds.  
We conclude with a table indicating how to apply Theorem~\ref{thm:genalgstability}  for a variety of approximations that are commonly used.

\subsection{Discretizing a persistence module}
\label{sec:example_discretize}
The $\R$-indexed persistence module $V$ derived by considering the sublevel set filtration of a function $f\colon X\to \R$ provides a characterization of the topography of $f$.
However, in practice only a finite number of calculations can be performed.
A simple idealization is to assume that calculations are performed only at integer values of $f$.
This leads to the following definition.
\begin{defn}
The \emph{$\Z$-discretized  persistence module $V^\Z$} is defined as follows.
Set
\[
V^\Z_t := V_{\lfloor t \rfloor} \quad \text{and}\quad \varphi_{V^\Z}(s, t) := \varphi_{V}(\lfloor s \rfloor, \lfloor t \rfloor),
\]
where $\lfloor \cdot \rfloor$ is the floor function.
\end{defn}

The following proposition provides an answer to the following question: given the $\Z$-discretized  persistence module $V^\Z$, what are the constraints on the persistence diagram associated to the persistence module $V$?

\begin{prop}
\label{prop:discretized_pm}
If $V$ is an $\R$-indexed PFD persistence module and  $V^\Z$ is the associated  $\Z$-discretized PFD persistence module, then the following are true:
\begin{enumerate}
\item[(i)] $(V,V^\Z)$ are $(\tau, \sigma)$-interleaved, where $\tau(t) = t$ and $\sigma(t) = \lceil t \rceil$; and
\item[(ii)] there exists a matching $\X : \pd({V^\Z}) \matching \pd(V)$ such that if $\X ( [b, d, i] ) = [ b', d', i' ]$, then
\[
b - 1 \leq b' \leq b \quad\text{and}\quad d - 1 \leq d' \leq d.
\]
Additionally, any unmatched points $[ b, d, i ] \in \pd(V^\Z)$ satisfy $d = b + 1$, and unmatched points in $[ b', d', i' ] \in \pd(V)$ satisfy $d' \leq \lfloor b' + 1 \rfloor$.
\end{enumerate}
\end{prop}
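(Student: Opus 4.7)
The plan is to first exhibit an explicit interleaving between $V$ and $V^\Z$ built from the transition maps of $V$, and then extract the matching statement directly from Proposition~\ref{prop:Matching}.

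For part (i), I will first verify that $(\tau,\sigma)$ is a translation pair: both $\tau\circ\sigma$ and $\sigma\circ\tau$ equal the ceiling function, which is monotone and satisfies $\lceil t\rceil\geq t$, so is itself a translation map. The two interleaving morphisms are the natural ones: $\phi_t:=\varphi_V(\lfloor t\rfloor,t)\colon V^\Z_t = V_{\lfloor t\rfloor}\to V_t$, and $\psi_t:=\varphi_V(t,\lceil t\rceil)\colon V_t\to V_{\lceil t\rceil} = V^\Z_{\lceil t\rceil}$. Monotonicity of $\lfloor\cdot\rfloor$ and $\lceil\cdot\rceil$ together with functoriality of $\varphi_V$ makes $\phi$ and $\psi$ into persistence module morphisms, and both commutativity constraints of Definition~\ref{defn:general_interleaving} reduce to the single telescoping identity $\varphi_V(t,\lceil t\rceil)\circ\varphi_V(\lfloor t\rfloor,t) = \varphi_V(\lfloor t\rfloor,\lceil t\rceil)$, which is again functoriality of $\varphi_V$.

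For part (ii), I will apply Proposition~\ref{prop:Matching} to the interleaving of part (i) to obtain a matching $\X\colon \pd(V^\Z)\matching\pd(V)$. The proposition calls for the auxiliary maps of Definition~\ref{defn:leftandrightinverse}. Since $\tau=\mathrm{id}_\R$, each of $\tau_L,\tau_R,\tau^\dagger_L,\tau^\dagger_R$ is the identity on $\overline{\R}$; a short limit-and-preimage computation for $\sigma=\lceil\cdot\rceil$ yields $\sigma_R(x)=\lfloor x\rfloor+1$ and $\sigma^\dagger_R(x)=\lceil x\rceil-1$. The crucial observation that makes the bounds sharp is that $V^\Z$ is constant on each $[n,n+1)$, so every interval in its decomposition has integer endpoints, and hence every $b,d$ arising from a point of $\pd(V^\Z)$ is an integer (or $\pm\infty$). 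For such $b$, $\sigma^\dagger_R(b)=b-1$ and $\sigma_R(b)=b+1$, and the four inequalities of Proposition~\ref{prop:Matching} collapse to the desired $b-1\leq b'\leq b$ and $d-1\leq d'\leq d$.

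The unmatched points are handled by the same proposition. An unmatched $[b,d,i]\in\pd(V^\Z)$ satisfies $d\leq\sigma_R(b)=b+1$; combined with $d>b$ and integrality of both endpoints, this forces $d=b+1$. An unmatched $[b',d',i']\in\pd(V)$ satisfies $d'\leq(\sigma_R\circ\tau_R)(b')=\lfloor b'\rfloor+1=\lfloor b'+1\rfloor$. The main obstacle will be careful bookkeeping: evaluating all four variants of the ceiling function from Definition~\ref{defn:leftandrightinverse} and setting up the interleaving in the direction that makes $\pd(V^\Z)$ the domain of the matching, so that the integer-endpoint observation is precisely what sharpens the generic unit-width bound from Proposition~\ref{prop:Matching} into the half-line bound of part (ii).
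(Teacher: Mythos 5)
Your proposal is correct and follows essentially the same route as the paper: the same interleaving morphisms $\phi_t=\varphi_V(\lfloor t\rfloor,t)$, $\psi_t=\varphi_V(t,\lceil t\rceil)$ verified via functoriality, followed by Proposition~\ref{prop:Matching} together with the observation that points of $\pd(V^\Z)$ have integer endpoints and the evaluations $\tau_L=\tau_R=\tau^\dagger_L=\tau^\dagger_R=\mathrm{id}$, $\sigma^\dagger_R(t)=\lceil t\rceil-1=\lceil t-1\rceil$ (your extra computation $\sigma_R(t)=\lfloor t\rfloor+1$ just makes explicit the unmatched-point bounds the paper leaves implicit).
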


\begin{proof}
(i) Define persistence module morphisms $\phi : V^\Z \rightarrow V(\tau)=V$ by $\phi_t := \varphi_V(\lfloor t \rfloor, t)$ and $\psi : V \rightarrow V^\Z(\sigma)$ by $\psi_t := \varphi_V(t, \lceil t \rceil)$.
Observe that
\begin{align*}
\psi(\tau)_t \circ \phi_t & = \psi_{\tau(t)} \circ \varphi_V(\lfloor t \rfloor,t) \\
& = \varphi_V(t,\lceil t \rceil)\circ \varphi_V(\lfloor t \rfloor,t) \\
& = \varphi_V(\lfloor t \rfloor, \lceil t \rceil) \\
& = \varphi_{V^\Z}(t,\sigma\circ\tau(t)).
\end{align*}
It is left to the reader to check that $\varphi(\sigma)_t\circ \psi_t = \varphi_V[t,(\tau\circ\sigma)(t)]$, and therefore that
$(V,V^\Z)$ are $(\tau, \sigma)$-interleaved.

(ii) Observe that if $[b,d,i]\in\pd({V^\Z})$, then $b,d\in\Z$. The proof now follows from Proposition~\ref{prop:Matching}  and the fact that for $t\in\R$
\[
\tau_R(t)=\tau_L(t) =\tau^\dagger_R(t)=\tau^\dagger_L(t)=t
\]
and
\[
\sigma^\dagger_R(t) = \lceil t - 1\rceil.
\]
\end{proof}

See Figure~\ref{fig:Zpd} for an illustration of an estimate of $\pd(V)$ from $\pd(V^\Z)$.

\subsection{Computing persistence diagrams for large point clouds}
\label{sec:example_point_clouds}

We now turn to the question of computing persistence diagrams for large point clouds.
For point clouds in arbitrary metric spaces, a standard approach makes use of a filtration of the associated Vietoris-Rips complex, which we define next.

\begin{defn}
\label{defn:vietoris-rips}
Let $(X,d)$ be a finite metric space with metric $d$.
The \emph{Vietoris-Rips complex of $X$ at scale $t$}, denoted by ${\cal R}(X, t)$, is the simplicial complex with vertices given by $X$ and containing the $N$-simplex $[ x_{i_0},\ldots, x_{i_N}]$ if and only if $d(x_{i_j},x_{i_k})\leq 2t$ for all $j,k = 0,\ldots, N$.

The collection $\{{\cal R}(X, t)\}_{t \in \R}$ is called the \emph{Vietoris-Rips filtration associated to $X$}.
\end{defn}

\begin{defn}
\label{defn:vietoris-rips-induced-pm}
Let $(X,d)$ be a finite metric space.
Fix a field $\sk$.
The \emph{persistence module induced by the Vietoris-Rips filtration associated to $X$}, denoted by $M^{\cal R}(X)$, is defined via simplicial homology as follows:
\[
M^{\cal R}(X)_t := H_*({\cal R}(X, t), \sk), \qquad t \in \R
\]
and the transition maps $\varphi_{M^{\cal R}(X)}(t,s)$ are the associated linear maps on homology induced by the inclusion maps $j_{X;t,s}\colon {\cal R}(X, t)\to {\cal R}(X, s)$.
\end{defn}

We remark that given a finite metric space $X$, the induced persistence module $M^{\cal R}(X)$ is a PFD persistence module.

Observe that for large $X$, the computational cost of determining  $H_*({\cal R}(X, t),\sk)$ grows rapidly as a function of $t$.
If  $Y\subset X$, then one expects that it is cheaper to compute $H_*({\cal R}(Y, t),\sk)$.
The goal of this section is twofold:
first, to quantify the difference between $M^{\cal R}(X)$ and $M^{\cal R}(Y)$; and second, to suggest an iterative procedure, motivated by \cite{dey_fan_wang}, for obtaining reasonable approximations of $M^{\cal R}(X)$.

\subsubsection{Subsampling a large point cloud}
\label{subsec:pointcoud_subsample}

\begin{defn}
Let $(X,d)$ be a finite metric space.
A subset $Y\subset X$ is a \emph{$\delta$-approximation} of $X$ if for every $x \in X$ there exists a $y \in Y$ such that $d(x, y) \leq \delta$.
\end{defn}

\begin{prop}
\label{prop:subsample_interleaving}
If $(X,d)$ is a finite metric space and $Y$ is a $\delta$-approximation of $X$, then the following approximations hold.
\begin{enumerate}
\item[(i)] The persistence modules $(M^{\cal R}(Y),M^{\cal R}(X))$ are $(\tau, \sigma)$-interleaved, where $\tau(t) = t$ and $\sigma(t) = t+\delta$.
\item[(ii)] There exists a matching $\X : \pd({M^{\cal R}(Y)}) \matching \pd({M^{\cal R}(X)})$ such that
if $\pd_\X( b , d, i ) = ( b' , d', i' )$, then $b - \delta \leq b' \leq b$ and $d - \delta \leq d' \leq d$.
Moreover, all unmatched points in $\pd({M^{\cal R}(Y)})$ and $\pd({M^{\cal R}(X)})$ are at most $\delta$ above the diagonal.
\end{enumerate}
\end{prop}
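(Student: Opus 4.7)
The plan is to prove (i) by exhibiting explicit interleaving morphisms built from a nearest-point map, and then to obtain (ii) as a direct specialization of Corollary~\ref{cor:genalgstab_invertible_pi}, since both $\tau(t) = t$ and $\sigma(t) = t+\delta$ are bijections.

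For (i), I would first use the $\delta$-approximation hypothesis to fix a (not necessarily continuous) map $\pi : X \to Y$ with $d(x, \pi(x)) \leq \delta$ for every $x \in X$. The inclusion $Y \hookrightarrow X$ is a simplicial map ${\cal R}(Y, t) \to {\cal R}(X, t)$ for every $t$, and hence induces $\phi_t : M^{\cal R}(Y)_t \to M^{\cal R}(X)_t$. For the other direction, I would verify that $\pi$ descends to a simplicial map ${\cal R}(X, t) \to {\cal R}(Y, t+\delta)$: if $d(x_j, x_k) \leq 2t$, then the triangle inequality gives $d(\pi(x_j), \pi(x_k)) \leq 2\delta + 2t = 2(t+\delta)$, so $\pi$ sends simplices to simplices. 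Passing to homology yields $\psi_t : M^{\cal R}(X)_t \to M^{\cal R}(Y)_{t+\delta}$.

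The commutativity constraint of the interleaving then reduces to two contiguity checks. For the first identity $\psi(\tau)_t \circ \phi_t = \varphi_{M^{\cal R}(Y)}(t, t+\delta)$, the underlying simplicial maps ${\cal R}(Y, t) \to {\cal R}(Y, t+\delta)$ are the plain inclusion and the composite $Y \hookrightarrow X \xrightarrow{\pi} Y$ followed by inclusion. These are contiguous, because for any simplex $[y_{i_0}, \ldots, y_{i_N}]$ of ${\cal R}(Y, t)$ the pairwise distances among $\{y_{i_0}, \ldots, y_{i_N}, \pi(y_{i_0}), \ldots, \pi(y_{i_N})\}$ are all bounded by $2t + 2\delta$, so this set spans a simplex of ${\cal R}(Y, t+\delta)$. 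Contiguous simplicial maps induce the same map on homology, establishing the identity; the second identity $\phi(\sigma)_t \circ \psi_t = \varphi_{M^{\cal R}(X)}(t, t+\delta)$ is handled symmetrically.

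For (ii), since $\tau$ and $\sigma$ are invertible, Corollary~\ref{cor:genalgstab_invertible_pi} supplies a matching $\X : \pd(M^{\cal R}(Y)) \matching \pd(M^{\cal R}(X))$ with $\sigma^{-1}(b) \leq b' \leq \tau(b)$ and $\sigma^{-1}(d) \leq d' \leq \tau(d)$, which simplify to $b-\delta \leq b' \leq b$ and $d-\delta \leq d' \leq d$. The corollary further states that unmatched points in $\pd(M^{\cal R}(Y))$ satisfy $d \leq (\sigma \circ \tau)(b) = b+\delta$ and unmatched points in $\pd(M^{\cal R}(X))$ satisfy $d' \leq (\tau \circ \sigma)(b') = b'+\delta$, placing both within $\delta$ of the diagonal. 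The main obstacle is the contiguity verification in (i); once it is established, (ii) is essentially a bookkeeping exercise with the corollary.
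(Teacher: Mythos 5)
Your proof is correct, and part (ii) is handled exactly as in the paper: since $\tau(t)=t$ and $\sigma(t)=t+\delta$ are bijections, Corollary~\ref{cor:genalgstab_invertible_pi} immediately gives the endpoint inequalities and the bound $d\leq b+\delta$ (resp.\ $d'\leq b'+\delta$) on unmatched points. For part (i) you follow the same overall skeleton as the paper (inclusion in one direction, a $\delta$-close projection $X\to Y$ in the other, then Lemma~\ref{lem:simplicialmap}-style triangle-inequality checks), but you justify the commutativity constraints differently. The paper normalizes its map $\gamma\colon X\to Y$ so that $\gamma|_Y=\mathrm{id}_Y$, which makes the first identity hold on the nose at the simplicial level, and then proves the second identity via Lemma~\ref{lem:simplicialhomotopy}, an explicit simplicial homotopy constructed in the language of simplicial sets (with $h_i([x_0,\ldots,x_k])=[x_0,\ldots,x_i,\gamma(x_i),\ldots,\gamma(x_k)]$). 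You instead treat both identities symmetrically by observing that each composite and the corresponding transition map are \emph{contiguous} simplicial maps into a Vietoris--Rips complex at scale $t+\delta$: your pairwise-distance bound $2t+2\delta$ is exactly the flag-complex condition needed, and contiguous simplicial maps induce the same map on homology. This is a legitimate and arguably more elementary route; it avoids the simplicial-set machinery and the normalization of $\gamma$, and the contiguity criterion is precisely what underlies the paper's explicit homotopy. What the paper's formulation buys in exchange is generality: Lemma~\ref{lem:simplicialhomotopy} is stated for arbitrary $Y,Y'\subseteq X$ because it is reused in the stitching construction of Section~\ref{subsec:merge_module}, whereas your contiguity argument covers just the case at hand. (Both you and the paper leave the routine check that $\phi$ and $\psi$ commute with the transition maps implicit; this is harmless since all maps are induced by fixed vertex maps.)
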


Figure~\ref{fig:subsamples}(left) provides an illustration of Proposition~\ref{prop:subsample_interleaving}(ii). Observe that since $\tau$ and $\sigma$ are invertible, Proposition~\ref{prop:subsample_interleaving}(ii) follows from Proposition~\ref{prop:subsample_interleaving}(i) and Corollary~\ref{cor:genalgstab_invertible_pi}.
The proof of Proposition~\ref{prop:subsample_interleaving}(i) occupies the remainder of this section.
We begin with some preliminary arguments.

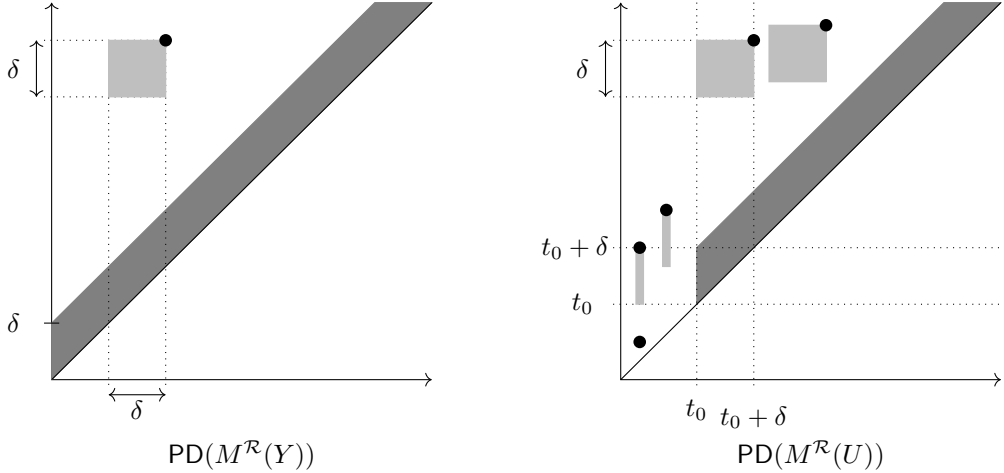
\begin{figure}[t]
\begin{picture}(300,200)

\put(0,0){
\begin{tikzpicture}
[scale=1]
		\draw [<->]  (5,0) --(0,0)  --(0,5);
		
		\fill[draw=black,color=gray]  (0,0) -- (0,0.75) -- (4.25,5) -- (5,5) -- (0,0);
		
		\draw [] (0,0) -- (5,5);
		
		 
		 \draw [dotted] (1.5,4.5) --(1.5,-0.1); 
		 \draw [dotted] (0.75,4.5) --(0.75,-0.1); 
		 
		 \draw [<->]  (0.75,-0.2) --(1.5,-0.2);
		 \draw(1.12,-0.4) node{$\delta$};
		 
		 \draw [dotted] (-0.1,4.5) --(1.5,4.5); 
		 \draw [dotted] (-0.1,3.75) --(1.5,3.75); 
		 
		 \draw [<->]  (-0.2,3.75) --(-0.2,4.5);
		 \draw(-0.5,4.12) node{$\delta$};

		 \fill[draw=black,color=lightgray]  (1.5,4.5) -- (1.5,3.75) -- (0.75,3.75) -- (0.75,4.5) -- (1.5,4.5);
		 
		 \node at (1.5,4.5) [circle,fill, scale=0.5] {};
		  
		 \draw [] (-0.1,0.75) --(0.1,0.75);
		 
		\draw(-0.5, 0.75) node{$\delta$};
		
		\draw(2.5,-1) node{$\pd({M^{\cal R}(Y)})$};   
\end{tikzpicture}
}		

\put(200,0){
\begin{tikzpicture}
[scale=1]
		\draw [<->]  (5,0) --(0,0)  --(0,5);
		
		\fill[draw=black,color=gray]  (1.0,1.0) -- (1.0,1.75) -- (4.25,5) -- (5,5) -- (1.0,1.0);
		
		\draw [] (0,0) -- (5,5);
		
		 
		 \draw [dotted] (1.0,5.0) --(1.0,-0.1); 
		 \draw [dotted] (1.75,5) --(1.75,-0.1); 
		 
		 
		 \draw [dotted] (-0.1,1.0) --(5,1.0); 
		 \draw [dotted] (-0.1,1.75) --(5,1.75);
		 \draw [dotted] (-0.1,4.5) --(1.75,4.5); 
		 \draw [dotted] (-0.1,3.75) --(1.75,3.75); 
		 
		 \draw [<->]  (-0.2,3.75) --(-0.2,4.5);
		 \draw(-0.5,4.12) node{$\delta$};

		 \fill[draw=black,color=lightgray]  (1.75,4.5) -- (1.75,3.75) -- (1,3.75) -- (1,4.5) -- (1.75,4.5);
		 \fill[draw=black,color=lightgray]  (2.7,4.7) -- (2.7,3.95) -- (1.95,3.95) -- (1.95,4.7) -- (2.7,4.7);
		 \fill[draw=black,color=lightgray]  (0.20,1.0) rectangle (0.30, 1.75);
		 \fill[draw=black,color=lightgray]  (0.55,1.5) rectangle (0.65, 2.25);
		 
		 \node at (0.25,0.5) [circle,fill, scale=0.5] {};
		 \node at (0.25,1.75) [circle,fill, scale=0.5] {};
		 \node at (0.6,2.25) [circle,fill, scale=0.5] {};
		 \node at (1.75,4.5) [circle,fill, scale=0.5] {};
		 \node at (2.7,4.7) [circle,fill, scale=0.5] {};

		 
		\draw(-0.6,1.75) node{$t_0+\delta$};
		
		\draw(-0.5,1.0) node{$t_0$};
		\draw(1.75,-0.5) node{$t_0+\delta$};
		\draw(1.0,-0.4) node{$t_0$};
		
		\draw(2.5,-1) node{$\pd({M^{\cal R}(U)})$};   		   
\end{tikzpicture}
}		

\end{picture}
\caption{ (left) Schematic diagram illustrating the quality of the matching from Proposition~\ref{prop:subsample_interleaving}. 
The persistence diagram $\pd({M^{\cal R}(Y)})$ is shown.
The dark gray region indicates the possible locations of the unmatched points for the persistence diagrams $\pd({M^{\cal R}(Y)})$ and $\pd({M^{\cal R}(X)})$.  
The light gray region indicates the possible location of the point of  $\pd({M^{\cal R}(X)})$ that is matched to the point of $\pd({M^{\cal R}(Y)})$ that is shown. 
(right) Schematic diagram illustrating the quality of the matching from Corollary~\ref{cor:subsample_interleaving}.
The persistence diagram $\pd({M^{\cal R}(U)})$ is shown.
Persistence points for the persistence diagrams $\pd({M^{\cal R}(U)})$ and $\pd({M^{\cal R}(X)})$ agree in the region $[0,t_0)\times [0,t_0)$.
Unmatched points for the persistence diagrams $\pd({M^{\cal R}(U)})$ and $\pd({M^{\cal R}(X)})$ will lie in the dark gray region. 
The light gray region indicates the possible location of the point of  $\pd({M^{\cal R}(X)})$ that is matched to the point of $\pd({M^{\cal R}(U)})$ that is shown.
}
\label{fig:subsamples}
\end{figure}

\begin{lem}
\label{lem:simplicialmap}
Let $Y,Y'\subseteq X$ and $\delta \geq 0$.
If $\gamma: Y \rightarrow Y'$ satisfies $d(x, \gamma(x)) \leq \delta$ for all $x \in Y$, then
$\tilde{\gamma}_t: \mathcal{R}(Y, t) \rightarrow \mathcal{R}(Y', t+\delta)$,
defined by 
\[
\tilde{\gamma}_t[x_0, \cdots, x_k] = [\gamma(x_0), \cdots, \gamma(x_k)]\quad \text{for any simplex $[x_0, \cdots, x_k]\in \mathcal{R}(Y, t)$},
\] 
is a simplicial map.
\end{lem}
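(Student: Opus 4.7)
The plan is to verify the two requirements for $\tilde{\gamma}_t$ to be a simplicial map: that it is well-defined by its action on vertices, and that it carries each simplex of $\mathcal{R}(Y,t)$ to a simplex of $\mathcal{R}(Y',t+\delta)$. The first is immediate since $\gamma$ already provides a vertex-level map $Y\to Y'$ and the definition of $\tilde{\gamma}_t$ on a simplex is the obvious extension; so the real content is checking the simplex-to-simplex condition, which amounts to a single triangle-inequality estimate.

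Concretely, I would fix a simplex $[x_0,\ldots,x_k]\in\mathcal{R}(Y,t)$. By Definition~\ref{defn:vietoris-rips}, this means $d(x_i,x_j)\leq 2t$ for every pair of indices $i,j\in\{0,\ldots,k\}$. The goal is to show $[\gamma(x_0),\ldots,\gamma(x_k)]\in\mathcal{R}(Y',t+\delta)$, i.e.\ that $d(\gamma(x_i),\gamma(x_j))\leq 2(t+\delta)$ for all such $i,j$. Applying the triangle inequality together with the hypothesis $d(x,\gamma(x))\leq\delta$ gives
\[
d(\gamma(x_i),\gamma(x_j))\leq d(\gamma(x_i),x_i)+d(x_i,x_j)+d(x_j,\gamma(x_j))\leq \delta+2t+\delta=2(t+\delta),
\]
which is exactly the Vietoris--Rips condition at scale $t+\delta$, so the image is a simplex of $\mathcal{R}(Y',t+\delta)$ as required.

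One small bookkeeping remark to include: if $\gamma$ is not injective on the vertex set $\{x_0,\ldots,x_k\}$, then $[\gamma(x_0),\ldots,\gamma(x_k)]$ should be interpreted as the simplex spanned by the (distinct) images; this is still a simplex in $\mathcal{R}(Y',t+\delta)$, since every pair of its vertices still satisfies the distance bound. There is no substantive obstacle here — the entire proof is a single triangle-inequality computation together with a sentence noting that $\tilde{\gamma}_t$ is determined by its vertex map.
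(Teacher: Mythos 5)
Your proof is correct and follows essentially the same argument as the paper: a single triangle-inequality estimate showing $d(\gamma(x_i),\gamma(x_j))\leq 2(t+\delta)$ for each pair of vertices. The only cosmetic difference is that the paper routes the check through the $1$-skeleton (using that Vietoris--Rips complexes are determined by their edges), whereas you verify the pairwise condition directly from Definition~\ref{defn:vietoris-rips}; these are the same computation, and your remark about non-injectivity of $\gamma$ on the vertex set is a harmless clarification the paper handles implicitly.
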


\begin{proof}
To prove that $\tilde{\gamma}$ is a simplicial map, we need to show that for every $k$-simplex $[x_0, ... , x_k] \in \mathcal{R}(Y, t)$, the $k$-simplex $[\gamma(x_0), ..., \gamma(x_k)]$ is a simplex in $\mathcal{R}(Y', t + \delta)$.
Since the simplices in a Vietoris-Rips complex are fully determined by its $1$-skeleton, we only need to show that the $1$-skeleton of $\mathcal{R}(Y, t)$ is mapped to the $1$-skeleton of $\mathcal{R}(Y', t+\delta)$.
Recall that $[x,y]$ is an edge in $\mathcal{R}(Y, t)$ if and only if $d(x, y) \leq 2t$.
Thus, we have
\begin{align*}
d(\gamma(x), \gamma(y)) &\leq d(\gamma(x), x) + d(x, y) + d(y, \gamma(y)) \\
&\leq \delta + 2t + \delta \\
&= 2(t + \delta),
\end{align*}
and so $[\gamma(x), \gamma(y)]$ is either a $1$-simplex or a $0$-simplex in $\mathcal{R}(Y', t+ \delta)$.
\end{proof}

Let $Y\subset X$ be a $\delta$-approximation and let $\iota_t \colon {\cal R}(Y, t)\to  {\cal R}(X, t)$ denote the inclusion map.
Set 
\[
\phi_t := i_{t*}\colon H_*({\cal R}(Y, t))\to  H_*({\cal R}(X, t)).
\]
Since $Y$ is a $\delta$-approximation, there exists $\gamma\colon X\to Y$ such that $d(x, \gamma(x)) \leq \delta$ for all $x \in X$, and $\gamma(y)=y$ for all $y\in Y$. 
By Lemma~\ref{lem:simplicialmap}, $\tilde{\gamma}_t: \mathcal{R}(X, t) \rightarrow \mathcal{R}(Y, t+\delta)$ is a simplicial map and hence we can define
\[
\psi_t := \tilde{\gamma}_{t*} \colon H_*(\mathcal{R}(X, t)) \rightarrow H_*(\mathcal{R}(Y, t+\delta)).
\]
Our goal is to show that $\phi\colon M^{\cal R}(Y)\to M^{\cal R}(X)$ and $\psi \colon M^{\cal R}(X) \to M^{\cal R}(Y)(\delta)$ are persistence module morphisms that guarantee that the persistence modules $(M^{\cal R}(Y),M^{\cal R}(X))$ are $(\tau, \sigma)$-interleaved, and therefore, provide a proof of Proposition~\ref{prop:subsample_interleaving}(i).

Observe that $\gamma_t\circ \iota_t = j_{Y;t,t+\delta}$ and hence
\begin{equation}
\label{eq:psiphi}
\psi(\tau)_t\circ \phi_t = \psi_t \circ \phi_t = \varphi_{M^{\cal R}(Y)}[t,t+\delta] =  \varphi_{M^{\cal R}(Y)}[t,(\sigma\circ \tau)t]. 
\end{equation}
The challenge is to show that the middle equality holds for
\begin{equation}
\label{eq:phipsi}
 \phi(\sigma)_t\circ \psi_t = \phi_{t+\delta}\circ \psi_t  = \varphi_{M^{\cal R}(X)}[t,t+\delta] = \varphi_{M^{\cal R}(X)}[t,( \tau\circ\sigma)t].
\end{equation}
For purposes of the next section, we prove a more general result than necessary.

\begin{lem}
\label{lem:simplicialhomotopy}
Consider $Y,Y'\subseteq X$ and  $\delta \geq 0$.
Let $\iota'_t \colon \mathcal{R}(Y', t) \to \mathcal{R}(X, t + \delta)$ and $\iota_t \colon \mathcal{R}(Y, t) \to \mathcal{R}(X, t )$ be the simplicial maps induced by inclusion.
If $\gamma \colon Y' \to Y$ satisfies $d(y, \gamma(y)) \leq \delta$ for all $y \in Y'$ and  $\tilde{\gamma}_t: \mathcal{R}(Y', t) \to \mathcal{R}(Y, t+\delta)$ is the simplicial map as defined in Lemma~\ref{lem:simplicialmap}, then 
$\iota_{(t+\delta)} \circ \tilde{\gamma}_t$ and  $\iota'_t$ are homotopic and hence
\[
\iota_{(t+\delta)*} \circ \tilde{\gamma}_{t*} = \iota'_{t*}.
\]
\end{lem}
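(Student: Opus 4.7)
The plan is to show that the two simplicial maps $\iota_{(t+\delta)} \circ \tilde{\gamma}_t$ and $\iota'_t$ from $\mathcal{R}(Y', t)$ to $\mathcal{R}(X, t+\delta)$ are \emph{contiguous}, i.e.\ that for every simplex $\sigma \in \mathcal{R}(Y', t)$, the union $\iota'_t(\sigma) \cup \iota_{(t+\delta)}(\tilde{\gamma}_t(\sigma))$ is itself a simplex of $\mathcal{R}(X, t+\delta)$. Contiguous simplicial maps induce homotopic continuous maps on geometric realizations, hence agree on homology, which is exactly the conclusion $\iota_{(t+\delta)*} \circ \tilde{\gamma}_{t*} = \iota'_{t*}$.

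First I would fix an arbitrary simplex $[y_0, \ldots, y_k] \in \mathcal{R}(Y', t)$ and consider the candidate simplex
\[
\bigl[y_0, \ldots, y_k, \gamma(y_0), \ldots, \gamma(y_k)\bigr] \subseteq X.
\]
To verify that this is a simplex of $\mathcal{R}(X, t+\delta)$, I need to check that every pair of vertices in this set is within distance $2(t+\delta)$. There are three cases to check, and all three follow from the triangle inequality together with $d(y, \gamma(y)) \leq \delta$ and $d(y_i, y_j) \leq 2t$:
\begin{itemize}
\item $d(y_i, y_j) \leq 2t \leq 2(t+\delta)$;
\item $d(\gamma(y_i), \gamma(y_j)) \leq d(\gamma(y_i), y_i) + d(y_i, y_j) + d(y_j, \gamma(y_j)) \leq \delta + 2t + \delta = 2(t+\delta)$;
\item $d(y_i, \gamma(y_j)) \leq d(y_i, y_j) + d(y_j, \gamma(y_j)) \leq 2t + \delta \leq 2(t+\delta)$.
\end{itemize}
Hence the union is a simplex of $\mathcal{R}(X, t+\delta)$, establishing contiguity.

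The remaining step is to invoke the standard fact (see, e.g., Munkres, \emph{Elements of Algebraic Topology}) that contiguous simplicial maps induce the same homomorphism on simplicial homology, which gives the claimed equality. There is no real obstacle here: the content of the lemma is the metric computation above, and the passage from contiguity to equality on homology is a purely formal application of a classical result. The only point that deserves mention is that $\iota'_t$ makes sense as a map into $\mathcal{R}(X, t+\delta)$ precisely because $\mathcal{R}(X, t) \subseteq \mathcal{R}(X, t+\delta)$, so the inclusion $Y' \hookrightarrow X$ at scale $t$ factors through the inclusion at scale $t+\delta$.
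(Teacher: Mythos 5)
Your proof is correct, but it takes a genuinely different route from the paper. You argue via contiguity: for each simplex $[y_0,\ldots,y_k]$ of $\mathcal{R}(Y',t)$ you check, using the triangle inequality, that all pairwise distances among $y_0,\ldots,y_k,\gamma(y_0),\ldots,\gamma(y_k)$ are at most $2(t+\delta)$, so that (since a Vietoris--Rips complex is a flag complex, where a vertex set spans a simplex precisely when all pairwise distances are within the scale) the union of the images of the two maps spans a simplex of $\mathcal{R}(X,t+\delta)$; you then invoke the classical theorem that contiguous simplicial maps induce homotopic maps on realizations and hence equal maps on homology. The paper instead works in the category of simplicial sets and writes down an explicit simplicial homotopy, the prism-type operators $h_i([x_0,\ldots,x_k])=[x_0,\ldots,x_i,\gamma(x_i),\ldots,\gamma(x_k)]$, verifying (or leaving to the reader) the simplicial homotopy identities and citing Weibel for the passage from simplicial homotopy to equality on homology. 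The two arguments rest on exactly the same metric estimates --- the paper's operators $h_i$ land in $\mathcal{R}(X,t+\delta)$ for the same triangle-inequality reasons you give --- so the mathematical content is shared; your version is more elementary and shorter because contiguity in a flag complex reduces to the pairwise distance check, whereas the paper's construction is more explicit and self-contained within the simplicial-set framework it sets up (and yields the homotopy itself rather than deducing it from a cited theorem). Your closing remark that $\iota'_t$ is well defined into $\mathcal{R}(X,t+\delta)$ because $\mathcal{R}(X,t)\subseteq\mathcal{R}(X,t+\delta)$ is a point worth making and is consistent with the paper's statement.
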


\begin{proof}
To prove this we make use of the theory of simplicial sets \cite{weibel, friedman2012} and begin with the remark that by \cite[Lemma 8.3.13, Theorem 8.3.8]{weibel} it is sufficient to prove that $\iota_{(t+\delta)} \circ \tilde{\gamma}_t$ and  $\iota'_t$ are homotopic.

Given a simplicial complex $\cK$ let $\bar{\cK}$ denote the associated simplicial set.  To establish notation let $\bar{\cK}_k$ denote the $k$-dimensional simplices in $\cK$ and let $d_i\colon \bar{\cK}_k \to \bar{\cK}_{k-1}$ and
$s_i\colon \bar{\cK}_k \to \bar{\cK}_{k+1}$ be the delete and duplicate $i$-th vertex operations defined by
\[
d_i[v_0,\ldots, v_k] := [v_0,\ldots,v_{i-1}, v_{i+1},\ldots, v_k]
\]
and
\[
s_i[v_0,\ldots, v_k] := [v_0,\ldots,v_{i}, v_{i},\ldots, v_k].
\]

We claim that the functions $h_i\colon \bar{\mathcal{R}}(Y', t)_k \to \bar{\mathcal{R}}(X, t + \delta)_{k+1}$, $i=0,\ldots,k$, defined by 
\[
h_i([x_0,\ldots, x_k]) = [x_0,\ldots,x_i,\gamma(x_i),\ldots, \gamma(x_k)]
\]
provide a simplicial homotopy between $\iota_{(t+\delta)} \circ \tilde{\gamma}_t$ and  $\iota'_t$.
Recall that to justify this claim, it is sufficient to verify the following equalities:
\[
d_0 h_0  =  \iota_{(t+\delta)} \circ \tilde{\gamma}_t \quad\text{and}\quad
d_{k+1} h_k = \iota'_t
\]
\[
d_i h_j = \begin{cases}
h_{j-1}d_i & \text{if $i<j$} \\
d_i h_{i-1} & \text{if $i=j\neq 0$} \\
h_jd_{i-1} & \text{if $i > j+1$}\\
\end{cases}
\quad\text{and}\quad
s_ih_j = \begin{cases}
h_{j+1}s_i & \text{if $i\leq j$} \\
h_j s_{i-1} & \text{if $i>j$.}\\
\end{cases}
\]
We leave these calculations to the reader.
\end{proof}

\begin{proof}[Proof of Proposition~\ref{prop:subsample_interleaving}(i)]
As indicated above, the proof of Proposition~\ref{prop:subsample_interleaving}(i) follows from \eqref{eq:psiphi}, which has already been justified, and \eqref{eq:phipsi}, which follows from Lemma~\ref{lem:simplicialhomotopy} under the assumption that $Y'=X$.
\end{proof}

\subsubsection{Stitching persistence modules}
\label{subsec:merge_module}

Proposition~\ref{prop:subsample_interleaving} demonstrates that  given a finite metric space $(X,d)$, bounds on the persistence diagram $\pd(M^\cR(X))$ can be determined from $\pd(M^\cR(Y))$ under the assumption that $Y$ is a $\delta$-approximation of $X$.
The motivation for using $Y$ is that for large $t$ it may not be feasible to compute $H_*(\cR(X,t);\sk)$.
However, for small $t$, the size of the complex $\cR(X,t)$ is on the order of the size of $X$.
Furthermore, if $t<\delta$, then $\cR(Y,t)$ will fail to capture the fine geometric structure of $X$, and hence, as indicated by Figure~\ref{fig:subsamples}(left), points in this region of $M^\cR(X)$ will be unmatched with respect to $M^\cR(Y)$.
For this reason, we would like to construct a persistence module determined by $M^\cR(X)$ in the range  $t<\delta$ and $M^\cR(Y)$ in the range $t \geq \delta$.
This will provide finer information, as illustrated in Figure~\ref{fig:subsamples}(right). 
Observe that this suggests the need to be able to stitch together persistence modules and motivates the following definition.

\begin{defn}
\label{defn:PMstitching}
Let $(V, \varphi_V), (V', \varphi_{V'})$ and $(W, \varphi_W)$ be persistence modules such that 
$(V,W)$ are $(\tau, \sigma)$-interleaved via the morphisms $\phi : V \rightarrow W(\tau)$ and ${\psi : W \rightarrow V(\sigma)}$, and that $(V',W)$ are $(\tau', \sigma')$-interleaved via ${\phi' : V' \rightarrow W(\tau')}$ and ${\psi' : W \rightarrow V'(\sigma')}$.
A point $t_0\in\R$ is a \emph{$(V,V')$ stitch point} if
\[
t_0 \leq (\sigma'\circ \tau)(t_0).
\]
The \emph{$(V,V')$ persistence module stitched through W at stitch point $t_0$} consists of vector spaces
\[
U_t = \begin{cases}
V_t 		& \text{if $t<t_0$} \\
V_{t_0} 	& \text{if $t_0\leq t < (\sigma' \circ \tau)(t_0)$} \\
V'_t		& \text{if $(\sigma' \circ \tau)(t_0) \leq t$}
\end{cases}
\]
and linear maps
\[
\varphi_{U}(s, t) = \begin{cases}
\varphi_{V}(\min\{s, t_0\}, \min\{t, t_0\})		& \text{if $s\leq t < (\sigma' \circ \tau)(t_0)$} \\
\varphi_{V'}(t,(\sigma' \circ \tau)(t_0)) \circ \psi'_{\tau(t_0)} \circ \phi_{t_0} \circ \varphi_V(\min\{s, t_0\}, t_0) 	& \text{if $s < (\sigma' \circ \tau)(t_0) < t$} \\
\varphi_{V'}(s, t)		& \text{if $(\sigma' \circ \tau)(t_0) \leq t$}.
\end{cases}
\]
We denote this persistence module by $U = U(W;V,t_0,V')$ and for the sake of simplicity refer to it as the \emph{stitched persistence module}.
\end{defn}

The following diagram (with unlabeled arrows assumed to be the appropriate transition maps) shows the idea behind the vector spaces of $U$ and the transition maps $\varphi_U(s, t)$ for $s < (\sigma' \circ \tau)(t_0) \leq t$.
\begin{center}
\begin{displaymath}
\setlength\mathsurround{0pt}
\begin{tikzcd}
V_s \arrow[rr, rightarrow] && V_{t_0} \arrow[rr, dashrightarrow, "\psi'_{\tau(t_0)} \circ \phi_{t_0}"] \arrow[dr, rightarrow, "\phi_{t_0}"] && V'_{(\sigma' \circ \tau)(t_0)} \arrow[rr,rightarrow] && V'_t \\
W_s \arrow[rrr, rightarrow] &&& W_{\tau(t_0)} \arrow[rrr,rightarrow] \arrow[ur, rightarrow, "\psi'_{\tau(t_0)}"] &&& W_t
\end{tikzcd}
\end{displaymath}
\end{center}

\begin{prop}
\label{prop:stitching_lemma}
Let $(V, \varphi_V), (V', \varphi_{V'}),$ and $(W, \varphi_W)$ be PFD persistence modules such that 
$(V,W)$ are $(\tau, \sigma)$-interleaved via the morphisms $\phi : V \rightarrow W(\tau)$ and ${\psi : W \rightarrow V(\sigma)}$, and that $(V',W)$ are $(\tau', \sigma')$-interleaved via ${\phi' : V' \rightarrow W(\tau')}$ and ${\psi' : W \rightarrow V'(\sigma')}$. 
Assume that $t_0$ is a stitch point.
If $U = U(W;V,t_0,V')$ is the stitched persistence module, then the following statements are true:
\begin{enumerate}
\item[(i)] $U$ is a PFD persistence module;
\item[(ii)] $(U,W)$ are $(\eta, \rho)$-interleaved where 
 \begin{align*}
   \eta(t) &= \left\{
     \begin{array}{ll}
       \tau(t) & \text{if\;\;\;} t \leq t_0 \\
       \tau(t_0) & \text{if\;\;\;} t_0 < t < (\sigma' \circ \tau)(t_0) \\
       \tau'(t) & \text{if\;\;\;} (\sigma' \circ \tau)(t_0) \leq t;
     \end{array}
   \right.\\
   \rho(t) &= \left\{
     \begin{array}{ll}
       \sigma(t) & \text{if\;\;\;} t < \sigma^{-1}(t_0) \\
       (\sigma' \circ \tau)(t_0) & \text{if\;\;\;} \sigma^{-1}(t_0) \leq t < \tau(t_0)  \\
       \sigma'(t) & \text{if\;\;\;} \tau(t_0) \leq t.
     \end{array}
   \right.
\end{align*}
\end{enumerate}
\end{prop}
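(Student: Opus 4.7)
The plan is to handle (i) directly from the definitions, with the bulk of the work in verifying functoriality of the transition maps across the three regions $t<t_0$, $t_0 \leq t < (\sigma'\circ\tau)(t_0)$, and $(\sigma'\circ\tau)(t_0)\leq t$. The identity relation $\varphi_U(t,t) = \mathrm{id}$ is immediate in each region. For the composition law $\varphi_U(r,t) = \varphi_U(s,t)\circ\varphi_U(r,s)$, one does a case split on the positions of $r \leq s \leq t$ relative to the two breakpoints. The cases where $r,s,t$ all lie in the same region follow from functoriality in $V$ or $V'$; the cases where they straddle a single breakpoint follow from the definition. The only nontrivial case is when $r$ lies in the first region, $s$ in the second, and $t$ in the third: the two ways of composing the bridge $\psi'_{\tau(t_0)}\circ\phi_{t_0}$ with transitions in $V$ and $V'$ must agree, which follows from $\phi$ and $\psi'$ being persistence module morphisms together with functoriality of $\varphi_V$ and $\varphi_{V'}$. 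PFD-ness is immediate since each $U_t$ equals some $V_s$ or $V'_s$.

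For (ii), I construct the interleaving morphisms piecewise, setting
\begin{equation*}
\Phi_t := \begin{cases} \phi_t & t\leq t_0,\\ \phi_{t_0} & t_0<t<(\sigma'\circ\tau)(t_0),\\ \phi'_t & (\sigma'\circ\tau)(t_0)\leq t, \end{cases}
\end{equation*}
and
\begin{equation*}
\Psi_t := \begin{cases} \psi_t & t<\sigma^{-1}(t_0),\\ \varphi_{V'}(\sigma'(t),(\sigma'\circ\tau)(t_0))\circ\psi'_t & \sigma^{-1}(t_0)\leq t<\tau(t_0),\\ \psi'_t & \tau(t_0)\leq t, \end{cases}
\end{equation*}
where $\sigma^{-1}(t_0)$ is interpreted via the Galois-connection preimage developed in Section~\ref{subsec:2-preliminaries-GaloisConnections} when $\sigma$ is not a bijection. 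Before using these maps, I check that $\eta$ and $\rho$ are monotone and that $(\eta,\rho)$ is a translation pair: monotonicity follows by a direct case analysis across the breakpoints, using that $\tau(t_0)\leq \tau'((\sigma'\circ\tau)(t_0))$ (which holds since $\tau'\circ\sigma'$ is a translation map applied to $\tau(t_0)$), and the analogous inequality for $\rho$. The translation property of $\rho\circ\eta$ and $\eta\circ\rho$ is then obtained from the translation properties of $(\tau,\sigma)$ and $(\tau',\sigma')$ together with the stitch-point hypothesis $t_0\leq (\sigma'\circ\tau)(t_0)$.

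I then check that $\Phi$ and $\Psi$ intertwine the transition maps of their source and target modules by an analogous region-by-region case analysis, using the commutativity of $\phi$ and $\phi'$ with transition maps of $V,V'$ and $W$ and the definition of $\varphi_U$ on the middle region (which was crafted precisely so that $\Phi$ commutes across the two breakpoints). Finally, the interleaving equations $\Psi(\eta)_t\circ\Phi_t = \varphi_U(t,(\rho\circ\eta)(t))$ and $\Phi(\rho)_t\circ\Psi_t = \varphi_W(t,(\eta\circ\rho)(t))$ are checked by evaluating both sides in each region, with an additional sub-split on the region in which $(\rho\circ\eta)(t)$ or $(\eta\circ\rho)(t)$ lies. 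Each resulting identity either reduces to the commutativity constraint of $(V,W)$, the commutativity constraint of $(V',W)$, or the definition of the bridge $\psi'_{\tau(t_0)}\circ\phi_{t_0}$ appearing in $\varphi_U$.

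The hard part is purely bookkeeping in the middle region. The definition of $\varphi_U$ on $s<(\sigma'\circ\tau)(t_0)\leq t$ inserts the bridge $\psi'_{\tau(t_0)}\circ\phi_{t_0}$, and one must verify that $\Phi$ and $\Psi$ compose in a way that reproduces exactly this bridge (and not some longer detour through $W$) when the interleaving equations are evaluated near the breakpoints. Once the choices of $\Phi_t$ and $\Psi_t$ near $t_0$, $\sigma^{-1}(t_0)$, $\tau(t_0)$, and $(\sigma'\circ\tau)(t_0)$ are recorded carefully, the rest of the verification is mechanical.
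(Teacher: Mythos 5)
Your proposal is correct and follows essentially the same route as the paper: part (i) by direct case analysis, and part (ii) by constructing the piecewise interleaving morphisms region by region and verifying the morphism and commutativity conditions by diagram chases across the breakpoints. Note that your middle-region map $\Psi_t = \varphi_{V'}(\sigma'(t),(\sigma'\circ\tau)(t_0))\circ\psi'_t$ coincides with the paper's choice $\psi'_{\tau(t_0)}\circ\varphi_W(t,\tau(t_0))$, since $\psi'$ being a persistence module morphism makes the two composites equal, so the interleaving you build is literally the same one.
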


\begin{proof}  (i) Since $V$ and $V'$ are PFD persistence modules, $U_t$ is finite dimensional for each $t\in \R$.
It is left to the reader to check that $\varphi_U(t,t) = \text{id}_{U_t}$ and $\varphi_U(s, t) \circ \varphi_U(r, s) = \varphi_U(r, t)$ for every $r \leq s \leq t$ in $\R$. 

(ii) To show that $(U,W)$ are $(\eta, \rho)$-interleaved we will show that the morphisms $\bar{\phi}: U \rightarrow W(\eta)$ and $\bar{\psi} : W \rightarrow U(\rho)$, where 
 \begin{align*}
   \bar{\phi}_t &= \left\{
     \begin{array}{ll}
       \phi_t & \text{if\;\;\;} t \leq t_0 \\
       \phi_{t_0} & \text{if\;\;\;} t_0 < t < (\sigma' \circ \tau)(t_0) \\
       \phi'_t & \text{if\;\;\;} (\sigma' \circ \tau)(t_0) \leq t, 
     \end{array}
   \right.\\
   \bar{\psi}_t &= \left\{
     \begin{array}{ll}
       \psi_t & \text{if\;\;\;} t < \sigma^{-1}(t_0) \\
       \psi'_{\tau(t_0)} \circ \varphi_W(t, \tau(t_0)) & \text{if\;\;\;} \sigma^{-1}(t_0) \leq t < \tau(t_0)  \\
       \psi'_t & \text{if\;\;\;} \tau(t_0) \leq t,
     \end{array}
   \right.
\end{align*} 
give the desired interleaving of $U$ and $W$.
To show that $\bar{\phi}$ and $\bar{\psi}$ are persistence module morphisms, we first note that the monotone functions $\eta$ and $\rho$ line up with the indices of the shifts of $\bar{\phi}$ and $\bar{\psi}$ by inspection.
For what follows, let $s \leq t \in \R$.

We will now show that $\bar{\phi} \colon U \rightarrow W(\eta)$ is a persistence module morphism.
If $s \leq t < (\sigma' \circ \tau)(t_0)$, then $\bar{\phi}_s = \phi_{\min\{s, t_0\}}$ and $\bar{\phi}_t = \phi_{\min\{t, t_0\}}$, and if $(\sigma' \circ \tau)(t_0) \leq s \leq t$, then $\bar{\phi}_s = \phi'_s$ and $\bar{\phi}_t = \phi'_t$, and so these cases hold.
Now suppose that $s < (\sigma' \circ \tau)(t_0) \leq t$.
The diagram
\begin{center}
\begin{displaymath}
\setlength\mathsurround{0pt}
\begin{tikzcd}
V_s \arrow[dr, rightarrow, "\phi_{s}"] \arrow[r, rightarrow] & V_{t_0} \arrow[rr, dashrightarrow, "\psi'_{\tau(t_0)} \circ \phi_{t_0}"] \arrow[dr, rightarrow, "\phi_{t_0}"] && V'_{(\sigma' \circ \tau)(t_0)} \arrow[dr,rightarrow,"\phi'_{(\sigma' \circ \tau)(t_0)}"] \arrow[r,rightarrow] & V'_t \arrow[dr,rightarrow,"\phi'_{t}"] \\
& W_{\tau(s)} \arrow[r, rightarrow] & W_{\tau(t_0)} \arrow[rr,rightarrow] \arrow[ur, rightarrow, "\psi'_{\tau(t_0)}"] && W_{(\tau' \circ \sigma' \circ \tau)(t_0)} \arrow[r,rightarrow] & W_{\tau'(t)}
\end{tikzcd}
\end{displaymath}
\end{center}
where unlabeled arrows are transition maps, commutes since both $\phi$ and $\phi'$ are persistence module morphisms and  $(V',W)$ are $(\tau', \sigma')$-interleaved.
It follows that $\bar{\phi}_t \circ \varphi_U(s, t) = \varphi_{W(\eta)}(s, t) \circ \bar{\phi}_s$, and thus $\bar{\phi}\colon U \rightarrow W(\eta)$ is a persistence module morphism.

Now we will show that $\bar{\psi} : W \rightarrow U(\rho)$ is a persistence module morphism.
If $s \leq t < \sigma^{-1}(t_0)$, then $\bar{\psi}_s = \psi_s$ and $\bar{\psi}_t = \psi_t$, and if $\tau(t_0) \leq s \leq t$, then $\bar{\psi}_s = \psi'_s$ and $\bar{\psi}_t = \psi'_t$, so these cases are clear.
Suppose that $s < \sigma^{-1}(t_0) \leq t \leq \tau(t_0)$.
This choice of $s$ and $t$ yield $\bar{\psi}_s = \psi_s$ and $\bar{\psi}_t = \psi'_{\tau(t_0)} \circ \varphi_W(t, \tau(t_0))$ by definition.
Since $s < \sigma^{-1}(t_0)$, it follows that $s \leq (\tau \circ \sigma)(s) \leq \tau(t_0)$, and so the following diagram 
\begin{center}
\begin{displaymath}
\setlength\mathsurround{0pt}
\begin{tikzcd}
& V_{\sigma(s)} \arrow[dr, rightarrow, "\phi_{s}"] \arrow[rr, rightarrow] & &  V_{t_0} \arrow[rr, dashrightarrow, "\psi'_{\tau(t_0)} \circ \phi_{t_0}"] \arrow[dr, rightarrow, "\phi_{t_0}"] && V'_{(\sigma' \circ \tau)(t_0)} \\
W_s \arrow[rr,rightarrow] \arrow[drr, rightarrow] \arrow[ur, rightarrow,"\psi_s"] & & W_{(\tau \circ \sigma)(s)} \arrow[rr, rightarrow] & & W_{\tau(t_0)} \arrow[ur, rightarrow, "\psi'_{\tau(t_0)}"] \\
& & W_t \arrow[urr, rightarrow] \arrow[uurrr, bend right=35, "\bar{\psi}_t"']
\end{tikzcd}
\end{displaymath}
\end{center}
where unlabeled arrows are transition maps, commutes. Hence, $\bar{\psi}_t \circ \varphi_W(s,t) = \varphi_{U(\rho)}(s, t) \circ \bar{\psi}_s$ in this case as well.
For $\sigma^{-1}(t_0) \leq s \leq t \leq \tau(t_0)$, then 
\begin{align*}
\bar{\psi}_t \circ \varphi_W(s,t) &= \psi'_{\tau(t_0)} \circ \varphi_W(t, \tau(t_0)) \circ \varphi_W(s,t) \\
&= \varphi_U((\sigma' \circ \tau)(t_0),(\sigma' \circ \tau)(t_0)) \circ \psi'_{\tau(t_0)} \circ \varphi_W(s, \tau(t_0))\\
&= \varphi_{U(\rho)}(s,t) \circ \bar{\psi}_s,
\end{align*}
and so this case also holds.
If $s < \sigma^{-1}(t_0) \leq \tau(t_0) < t$, the commutativity of the diagram
\begin{center}
\begin{displaymath}
\setlength\mathsurround{0pt}
\begin{tikzcd}
& V_{\sigma(s)} \arrow[dr, rightarrow, "\phi_{s}"] \arrow[r, rightarrow] &  V_{t_0} \arrow[rr, dashrightarrow, "\psi'_{\tau(t_0)} \circ \phi_{t_0}"] \arrow[dr, rightarrow, "\phi_{t_0}"] && V'_{(\sigma' \circ \tau)(t_0)} \arrow[r,rightarrow] & V'_{\sigma'(t)} \\
W_s \arrow[rr,rightarrow]  \arrow[ur, rightarrow,"\psi_s"] & & W_{(\tau \circ \sigma)(s)} \arrow[r, rightarrow] & W_{\tau(t_0)} \arrow[ur, rightarrow, "\psi'_{\tau(t_0)}"] \arrow[r,rightarrow] & W_t  \arrow[ur, rightarrow, "\psi'_t"]\\
\end{tikzcd}
\end{displaymath}
\end{center}
ensures that $\bar{\psi}_t \circ \varphi_W(s,t) = \varphi_{U(\rho)}(s, t) \circ \bar{\psi}_s$.
Finally, it remains to check the case when $\sigma^{-1}(t_0) \leq s \leq \tau(t_0) < t$.
This follows from extending the case when $\sigma^{-1}(t_0) \leq s \leq t \leq \tau(t_0)$ by the right-most commutative square in the previous diagram.
Thus, we have shown that $\bar{\phi} \colon U \rightarrow W(\eta)$ and $\bar{\psi}\colon U \rightarrow W(\eta)$ are persistence module morphisms.

We must now show that $\bar{\phi}$ and $\bar{\psi}$ satisfy the commutativity constraints of Definition~\ref{defn:general_interleaving}.
That is, for every $t \in \R$, we must show that 
\begin{equation}
\label{eq:stitching_1}
\bar{\psi}(\eta)_t \circ \bar{\phi}_t = \varphi_U(t, (\rho \circ \eta)(t))
\end{equation}
and
\begin{equation}
\label{eq:stitching_2}
\bar{\phi}(\rho)_t \circ \bar{\psi}_t = \varphi_W(t, (\rho \circ \eta)(t))
\end{equation}
for all $t \in \R$.
First we prove (\ref{eq:stitching_1}) for all $t \in \R$.
When $\tau(t) < \sigma^{-1}(t_0)$, then (\ref{eq:stitching_1}) follows from the fact that $V$ and $W$ are $(\tau, \sigma)$-interleaved.
Similarly, when $(\sigma' \circ \tau)(t_0) \leq t$, (\ref{eq:stitching_1}) holds since $V'$ and $W$ are $(\tau', \sigma')$-interleaved.
If $t_0 \leq t < (\sigma' \circ \tau)(t_0)$, then (\ref{eq:stitching_1}) holds by the definition of $(U, \varphi_U)$.
Now suppose that $t \leq t_0$ such that $\sigma^{-1}(t_0) \leq \tau(t)$.
Then (\ref{eq:stitching_1}) holds from the commutativity of the following diagram.
\begin{center}
\begin{displaymath}
\setlength\mathsurround{0pt}
\begin{tikzcd}
V_t \arrow[dr, rightarrow, "\phi_t"] \arrow[rr, rightarrow] & & V_{t_0} \arrow[dr, rightarrow, "\phi_{t_0}"] \arrow[rr, dashrightarrow, "\psi'_{\tau(t_0)} \circ \phi_{t_0}"]  && V'_{(\sigma' \circ \tau)(t_0)}\\
& W_{\tau(t)}\arrow[urrr, bend right=50, "\bar{\psi}_{\tau(t)}"'] \arrow[rr, rightarrow] & & W_{\tau(t_0)} \arrow[ur, rightarrow, "\psi'_{\tau(t_0)}"]
\end{tikzcd}
\end{displaymath}
\end{center}
Thus we have shown that (\ref{eq:stitching_1}) holds for every $t \in \R$.

We now verify that (\ref{eq:stitching_2}) holds for every $t \in \R$.
When $t < \sigma^{-1}(t_0)$, then (\ref{eq:stitching_2}) follows since $V$ and $W$ are $(\tau, \sigma)$-interleaved.
Similarly, if $\tau(t_0) \leq t$, then (\ref{eq:stitching_2}) follows since $V'$ and $W$ are $(\tau', \sigma')$-interleaved.
For $\sigma^{-1}(t_0) \leq t < \tau(t_0)$, then (\ref{eq:stitching_2}) follows by the commutativity of the following diagram.
\begin{center}
\begin{displaymath}
\setlength\mathsurround{0pt}
\begin{tikzcd}
& & V'_{(\sigma' \circ \tau)(t_0)} \arrow[dr, rightarrow, "\bar{\phi}_{(\sigma' \circ \tau)(t_0)}"] \\
W_t \arrow[urr, rightarrow, "\bar{\psi}_t"] \arrow[r, rightarrow] & W_{\tau(t_0)} \arrow[ur, rightarrow, "\psi'_{\tau(t_0)}"'] \arrow[rr, rightarrow] & & W_{(\tau' \circ \sigma' \circ \tau)(t_0)}
\end{tikzcd}
\end{displaymath}
\end{center}
Thus, we have also shown that (\ref{eq:stitching_2}) holds for every $t \in \R$.
It follows that $(\eta, \rho)$ is a translation pair.
\end{proof}

We conclude this section with a simple, but hopefully illustrative, application of Proposition~\ref{prop:stitching_lemma}.

\begin{figure}
\begin{picture}(300,220)

\put(0,110){
\begin{tikzpicture}
[scale=1]
		\draw [<->]  (2.5,0) --(0,0)  --(0,2.5);
		\draw(1.25,2.5) node{$\eta$};
		
		 \draw [] (0.75,0.1) --(0.75,-0.1);
		  \draw [] (1.5,0.1) --(1.5,-0.1);
		  
		\draw(0.75,-0.5) node{$t_0$};
		\draw(1.5,-0.5) node{$t_0+\delta$};
		  
		 \draw [dotted] (0.75,0.1) --(0.75,2.5);
		 \draw [dotted] (1.5,0.1) --(1.5,2.5); 
		  
		 \draw [] (-0.1,0.75) --(0.1,0.75);
		 \draw [] (-0.1,1.5) --(0.1,1.5);
		 
		\draw(-0.5, 0.75) node{$t_0$};
		\draw(-0.75,1.5) node{$t_0+\delta$};
		 
		 \draw [dotted] (0.1,0.75) --(2.5,0.75);
		 \draw [dotted] (0.1,1.5) --(2.5,1.5); 
		 
		 \draw [thick] (0.0,0.0) -- (0.75,0.75) -- (1.5,0.75);
		 \draw (1.5,0.75)  circle (0.08cm);
		  \draw [thick] (1.5,1.5) -- (2.5,2.5);
		  \node at (1.5,1.5) [circle,fill, scale=0.5] {};
		   
\end{tikzpicture}
}		

\put(150,110){
\begin{tikzpicture}
[scale=1]
		\draw [<->]  (2.5,0) --(0,0)  --(0,2.5);
		\draw(1.25,2.5) node{$\eta^\dagger_L$};
		
		 \draw [] (0.75,0.1) --(0.75,-0.1);
		  \draw [] (1.5,0.1) --(1.5,-0.1);
		  
		\draw(0.75,-0.5) node{$t_0$};
		\draw(1.5,-0.5) node{$t_0+\delta$};
		  
		 \draw [dotted] (0.75,0.1) --(0.75,2.5);
		 \draw [dotted] (1.5,0.1) --(1.5,2.5); 
		  
		 \draw [] (-0.1,0.75) --(0.1,0.75);
		 \draw [] (-0.1,1.5) --(0.1,1.5);
		 
		 
		 \draw [dotted] (0.1,0.75) --(2.5,0.75);
		 \draw [dotted] (0.1,1.5) --(2.5,1.5); 
		 
		 \draw [thick] (0.0,0.0) -- (0.75,0.75);
		 \draw (0.75,0.75)  circle (0.08cm);
		  \draw [thick]  (0.75,1.5) --(1.5,1.5) -- (2.5,2.5);
		  \node at (0.75,1.5) [circle,fill, scale=0.5] {};
		   
\end{tikzpicture}
}

\put(275,110){
\begin{tikzpicture}
[scale=1]
		\draw [<->]  (2.5,0) --(0,0)  --(0,2.5);
		\draw(1.25,2.5) node{$\eta^\dagger_R$};
		
		 \draw [] (0.75,0.1) --(0.75,-0.1);
		  \draw [] (1.5,0.1) --(1.5,-0.1);
		  
		\draw(0.75,-0.5) node{$t_0$};
		\draw(1.5,-0.5) node{$t_0+\delta$};
		  
		 \draw [dotted] (0.75,0.1) --(0.75,2.5);
		 \draw [dotted] (1.5,0.1) --(1.5,2.5); 
		  
		 \draw [] (-0.1,0.75) --(0.1,0.75);
		 \draw [] (-0.1,1.5) --(0.1,1.5);
		 
		 
		 \draw [dotted] (0.1,0.75) --(2.5,0.75);
		 \draw [dotted] (0.1,1.5) --(2.5,1.5); 
		 
		 \draw [thick] (0.0,0.0) -- (0.75,0.75);
		 \node at (0.75,0.75) [circle,fill, scale=0.5] {};
		  \draw [thick]  (0.75,1.5) --(1.5,1.5) -- (2.5,2.5);
		  \draw (0.75,1.5)  circle (0.08cm);
		 
\end{tikzpicture}
}	

\put(0,0){
\begin{tikzpicture}
[scale=1]
		\draw [<->]  (2.5,0) --(0,0)  --(0,2.5);
		\draw(1.25,2.5) node{$\rho$};
		
		 \draw [] (0.75,0.1) --(0.75,-0.1);
		  \draw [] (1.5,0.1) --(1.5,-0.1);
		  
		\draw(0.75,-0.5) node{$t_0$};
		\draw(1.5,-0.5) node{$t_0+\delta$};
		  
		 \draw [dotted] (0.75,0.1) --(0.75,2.5);
		 \draw [dotted] (1.5,0.1) --(1.5,2.5); 
		  
		 \draw [] (-0.1,0.75) --(0.1,0.75);
		 \draw [] (-0.1,1.5) --(0.1,1.5);
		 
		\draw(-0.5, 0.75) node{$t_0$};
		\draw(-0.75,1.5) node{$t_0+\delta$};
		 
		 \draw [dotted] (0.1,0.75) --(2.5,0.75);
		 \draw [dotted] (0.1,1.5) --(2.5,1.5); 
		 
		 \draw [thick] (0.0,0.0) -- (0.75,0.75);
		 \draw (0.75,0.75)  circle (0.08cm);
		 \draw [thick] (0.75,1.5) -- (1.75,2.5);
		  \node at (0.75,1.5) [circle,fill, scale=0.5] {};
		   
\end{tikzpicture}
}		

\put(150,0){
\begin{tikzpicture}
[scale=1]
		\draw [<->]  (2.5,0) --(0,0)  --(0,2.5);
		\draw(1.25,2.5) node{$\rho^\dagger_L =\rho^\dagger_R$};
		
		 \draw [] (0.75,0.1) --(0.75,-0.1);
		  \draw [] (1.5,0.1) --(1.5,-0.1);
		  
		\draw(0.75,-0.5) node{$t_0$};
		\draw(1.5,-0.5) node{$t_0+\delta$};
		  
		 \draw [dotted] (0.75,0.1) --(0.75,2.5);
		 \draw [dotted] (1.5,0.1) --(1.5,2.5); 
		  
		 \draw [] (-0.1,0.75) --(0.1,0.75);
		 \draw [] (-0.1,1.5) --(0.1,1.5);
		 
		 
		 \draw [dotted] (0.1,0.75) --(2.5,0.75);
		 \draw [dotted] (0.1,1.5) --(2.5,1.5); 
		 
		 \draw [thick] (0.0,0.0) -- (0.75,0.75) -- (1.5,0.75) -- (2.5,1.75);
		   
\end{tikzpicture}
}	

\end{picture}
\caption{ Functions $\eta$ and $\rho$ that provide an interleaving between the stitched persistence module $U = (M^\cR(X); M^\cR(X),t_0,M^\cR(Y))$ and $(M^\cR(X)$ where $Y$ is a $\delta$-approximation of a finite metric space $X$.}
\label{fig:etarho}
\end{figure}
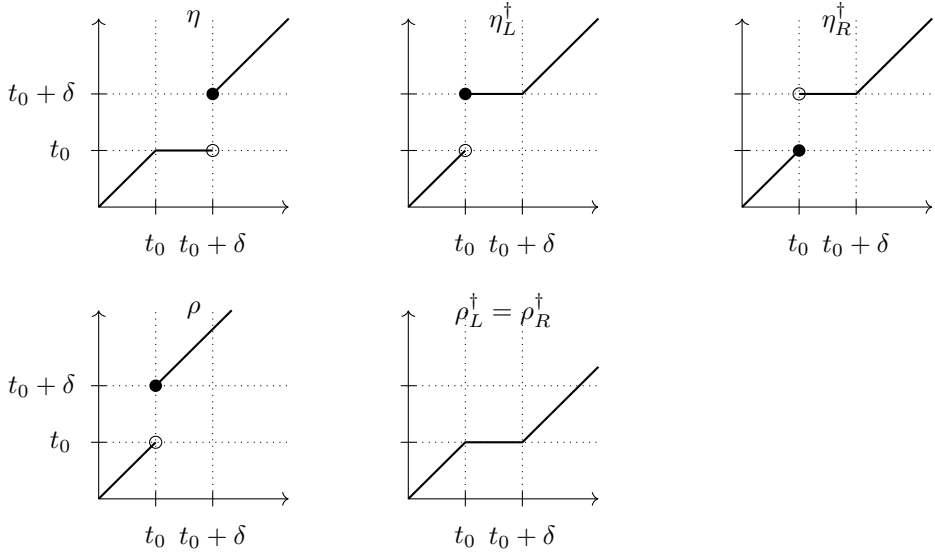

\begin{cor}
\label{cor:subsample_interleaving}
Let $(X,d)$ be a finite metric space and let $Y\subset X$ be a $\delta$-approximation of $X$ with $\delta > 0$.
Let $U = U(M^\cR(X); M^\cR(X),t_0,M^\cR(Y))$ be the stitched persistence module and  $\pd(U)$ its persistence diagram. If  $[ b , d, i ] \in \pd(U)$, then neither $b$ or $d$ is in the interval $(t_0,t_0 +\delta)$.
Moreover, there exists a matching $\X : \pd(U) \matching \pd({M^{\cal R}(X)})$ such that
if $\X( [ b , d, i ]) = [ b' , d', i' ]$ and 
\[
\begin{array}{lccccc}
\text{if} & b < d \leq t_0 & \text{then} & b' = b & \text{and} & d' = d ;\\
\text{if} & b \leq t_0 < t_0+\delta \leq d & \text{then} &   b' = b  &\text{and} & \max(t_0, d -\delta) \leq d' \leq d ;\\
\text{if} & b = t_0+\delta < d & \text{then} &   t_0 \leq b' \leq t_0+\delta  &\text{and} & \max(t_0, d -\delta) \leq d' \leq d  ;\\
\text{if} & t_0+\delta < b < d & \text{then} &   b-\delta \leq b' \leq b &\text{and}  &d-\delta \leq d' \leq d .
\end{array}
\]
All unmatched points $[b,d,i] \in\pd(U)$ satisfy 
\[
\begin{array}{cccr}
t_0 + \delta \leq b & \text{and} & d \leq b + \delta ,\\
\end{array}
\]
and  all unmatched points $[b',d',i']\in\pd({M^{\cal R}(X)})$ satisfy 
\[
\begin{array}{cccr}
t_0 < b' < d' \leq b' + \delta. \\
\end{array}
\]
\end{cor}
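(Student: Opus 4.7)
The strategy is to combine the interleaving machinery of Sections~\ref{subsec:2-preliminaries-Interleavings}--\ref{sec::Stbility} with the specific structure of the stitched module. First, by Proposition~\ref{prop:subsample_interleaving}(i), $(M^\cR(Y), M^\cR(X))$ is $(\text{id}, \sigma')$-interleaved with $\sigma'(t) = t+\delta$, while trivially $(M^\cR(X), M^\cR(X))$ is $(\text{id}, \text{id})$-interleaved. Since $\delta > 0$, the stitch-point condition $t_0 \leq (\sigma' \circ \text{id})(t_0) = t_0+\delta$ is satisfied, so Proposition~\ref{prop:stitching_lemma} yields an $(\eta, \rho)$-interleaving of $(U, M^\cR(X))$, with $\eta$ and $\rho$ exactly the piecewise functions depicted in Figure~\ref{fig:etarho}.

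The first claim of the corollary --- that no $[b,d,i] \in \pd(U)$ has $b$ or $d$ in $(t_0, t_0+\delta)$ --- follows immediately from Definition~\ref{defn:PMstitching}: on $[t_0, t_0+\delta)$ the transition maps of $U$ are identities, which forces every interval summand of $U$ to have decorated endpoints outside that open range. For the matching itself I would apply Proposition~\ref{prop:Matching} to the $(\eta, \rho)$-interleaving, explicitly computing $\eta_L, \eta_R, \eta^\dagger_L, \eta^\dagger_R, \rho_R, \rho^\dagger_R$ (each piecewise constant, identity, or shift-by-$\delta$, with breakpoints at $t_0$ and $t_0+\delta$), and then running a case analysis on whether $b$ and $d$ lie in $(-\infty, t_0)$, equal $t_0$, equal $t_0+\delta$, lie in $(t_0+\delta, t_0+2\delta]$, or lie in $(t_0+2\delta, \infty)$. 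The unmatched-point clauses carry over directly from the corresponding statements of Proposition~\ref{prop:Matching}.

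The hardest step is recovering the sharp lower bound $d' \geq d-\delta$ in Case~2 for $t_0+\delta < d \leq t_0+2\delta$: the composition $\eta_L \circ \eta^\dagger_R \circ \rho^\dagger_R$ flattens to $t_0$ on this range, so Proposition~\ref{prop:Matching} alone only produces $d' \geq t_0$. To obtain the sharper bound I would replace the $(\eta, \rho)$-interleaving of Proposition~\ref{prop:stitching_lemma} with a tighter $(\text{id}, \rho)$-interleaving of $(U, M^\cR(X))$, made possible by the equality $V = W = M^\cR(X)$: set $\tilde\phi_t$ to be the identity for $t < t_0$, the transition $\varphi_{M^\cR(X)}(t_0, t)$ for $t \in [t_0, t_0+\delta)$, and $\phi'_t$ for $t \geq t_0+\delta$; and set $\tilde\psi_t$ to be the identity for $t < t_0$ and $\psi'_t$ for $t \geq t_0$. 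The commutativity constraints then reduce to the underlying identity $\phi'_{t+\delta}\circ \psi'_t = \varphi_{M^\cR(X)}(t, t+\delta)$ from Proposition~\ref{prop:subsample_interleaving}, and applying Proposition~\ref{prop:Matching} to this tighter interleaving delivers $\rho^\dagger_R(b) \leq b' \leq b$ and $\rho^\dagger_R(d) \leq d' \leq d$, which unpack precisely into the four case bounds of the corollary.
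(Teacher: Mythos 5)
Your tighter $(\mathrm{id},\rho)$-interleaving is correct: the morphism and commutativity checks reduce to $\psi'$ being a persistence module morphism and to the identity $\phi'_{t+\delta}\circ\psi'_t=\varphi_{M^\cR(X)}(t,t+\delta)$, both of which are supplied by the proof of Proposition~\ref{prop:subsample_interleaving}. This is also a genuinely different (and sharper) route than the paper's: the paper simply applies Proposition~\ref{prop:Matching} to the $(\eta,\rho)$-interleaving of Proposition~\ref{prop:stitching_lemma} (together with one extra observation, see below), and, as you correctly computed, the composite $\eta_L\circ\eta^{\dagger}_R\circ\rho^{\dagger}_R$ flattens to $t_0$ on $(t_0,t_0+2\delta]$, so that route by itself only gives the lower bound $t_0$ for endpoints in $(t_0+\delta,t_0+2\delta]$. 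Your $(\mathrm{id},\rho)$-interleaving yields $\rho^{\dagger}_R(b)\le b'\le b$ and $\rho^{\dagger}_R(d)\le d'\le d$, which unpack exactly into the four stated cases; your argument for excluding endpoints in $(t_0,t_0+\delta)$ also matches the paper's.

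There is, however, a genuine gap in your handling of the unmatched points: they do not ``carry over directly'' from Proposition~\ref{prop:Matching}. With either interleaving, that proposition only gives $d\le\rho_R(b)$ for an unmatched $[b,d,i]\in\pd(U)$ and $d'\le\rho_R(b')$ for an unmatched $[b',d',i']\in\pd(M^\cR(X))$. Combined with $b<d$ and $b'<d'$ (closed--open Vietoris--Rips intervals, which rules out births strictly below $t_0$), this still permits an unmatched point of $\pd(U)$ with $b=t_0$, $d=t_0+\delta$, and unmatched points of $\pd(M^\cR(X))$ with $b'=t_0$, whereas the corollary asserts the strictly stronger conclusions $t_0+\delta\le b$ and $t_0<b'$. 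Closing this boundary case is precisely the paper's ``additional observation'': because $U$ and $M^\cR(X)$ literally coincide on $(-\infty,t_0]$, with the stitch at $t_0$ realized by the identity $U_{t_0}=M^\cR(X)_{t_0}$, every point of $\pd(U)$ born at $t_0$ must be matched to a point of $\pd(M^\cR(X))$ born at $t_0$ and vice versa, and this must be verified for the specific matching induced by the interleaving morphisms. Your proposal omits this step, so the unmatched-point clauses as stated are not yet proved.
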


\begin{proof} 
It follows form the definition of the Vietoris-Rips filtration that every interval in $\pdbf(M^\cR(X))$ and $\pdbf(M^\cR(Y))$ has a closed left-hand endpoint and an open right-hand endpoint, and so $b < d$ and $b' < d'$.
Moreover, by the definition of $U$, we cannot have $t_0 < b < t_0 + \delta$ or $t_0 < d < t_0 + \delta$.
Observe that the persistence modules $(M^\cR(X),M^\cR(X))$ are $(\tau(t)=t,\sigma(t)=t)$-interleaved and that $(M^\cR(Y),M^\cR(X))$ are $(\tau'(t)=t,\sigma'(t)=t+\delta)$-interleaved.
The result follows by applying Proposition~\ref{prop:stitching_lemma} and Proposition~\ref{prop:Matching}, with the additional observation that the identity map yielding $U_{t_0} = M^\cR(X)_{t_0}$ and the definition of $U$ forces that every point $[ t_0 , d, i ] \in \pd(U)$ is matched to some point $[t_0, d', i'] \in \pd({M^{\cal R}(X)})$ and vice versa.
For the reader's benefit, we indicate the forms of $\eta$, $\rho$, $\eta^\dagger$ and $\rho^\dagger$ in Figure~\ref{fig:etarho}.
\end{proof}

\subsubsection{Iterated subsampling of a large point cloud}
\label{subsec:pointcoud_subsample}

The goal of this section is to demonstrate that the techniques developed in Sections~\ref{subsec:merge_module} and \ref{subsec:pointcoud_subsample} can be used to obtain a multiscale approximation of the persistence diagram of a large point cloud $X$.
Our aim is to highlight the method as opposed to presenting an optimal result, and thus we begin with a sequence of $\delta$-approximations of $X$.

\begin{defn}
\label{defn:covertree}
Let $(X,d)$ be a finite metric space.
A sequence $\cY = \{Y_i \subseteq X\}_{i=0}^{m}$ is a \emph{$\Delta =\setof{\delta_i > 0}_{i=1}^m$ sampling} of $X$ if
\begin{enumerate}
\item[(i)] $Y_0 = X$, $Y_{i+1} \subset Y_{i}$, and $\delta_i < \delta_{i+1}$ for all $i$, and
\item[(ii)] for every $i>0$, $Y_{i}$ is a $\delta_i$ approximation of $Y_{i-1}$.
\end{enumerate}
\end{defn}

\begin{defn}
Let $ \cY = \{Y_i \subseteq X\}_{i=0}^{m}$ be a $\Delta = \setof{\delta_i > 0}_{i=1}^m$ sampling of a  finite metric space $(X,d)$.
An \emph{admissible stitching sequence} is a sequence $\cT = \setof{t_i \geq 0}_{i=1}^m$ satisfying
\[
t_{i+1} \geq t_{i}+\delta_{i},\quad i=1,\ldots, m-1.
\]
The associated \emph{stitched Vietoris-Rips persistence module} 
\[
U = U(M^\cR(X); \cY, \Delta, \cT)
\] 
is defined inductively as follows.

Using the fact that the pair $(M^\cR(X),M^\cR(X))$ is $(\tau_0,\sigma_0)$ interleaved, where $\tau_0(t) = t$ and $\sigma_0(t) = t$, Proposition~\ref{prop:subsample_interleaving}(i)  guarantees that $(M^\cR(Y_1),M^\cR(X))$ is $(\tau_1,\sigma_1)$ interleaved, where  $\tau_1(t)=t$ and $\sigma_1(t)=t+\delta_1$.
Set
\[
U_1:= U(M^\cR(X);M^\cR(X),t_1,M^\cR(Y_1)).
\]
By Proposition~\ref{prop:stitching_lemma}, $(U_1,M^\cR(X))$ is $(\eta_1,\rho_1)$ interleaved, where $\eta_1$ and $\rho_1$ are given as in Proposition~\ref{prop:stitching_lemma}(ii).

By Definition~\ref{defn:covertree}, $Y_i$ is a $\delta_i$-approximation of $X$, and hence by Proposition~\ref{prop:subsample_interleaving}(i), the persistence modules $(M^\cR(Y_{i}),M^\cR(X))$ are $(\tau_{i},\sigma_{i})$ interleaved where $\tau_{i}(t) = t$ and $\sigma_{i}(t) = t+\delta_i$.
Inductively, for $i=1,\ldots, m$, we use that the persistence modules $(U_{i-1},M^\cR(X))$ are $(\eta_{i-1},\rho_{i-1})$-interleaved and $(M^\cR(Y_{i}),M^\cR(X))$  are $(\tau_{i},\sigma_{i})$-interleaved  to define
\[
U_i := U(M^\cR(X);U_{i-1},t_{i},M^\cR(Y_{i}))
\]
and, again, use Proposition~\ref{prop:stitching_lemma}(ii) to define $(\eta_i,\rho_i)$ that gives the interleaving of the pair $(U_i,M^\cR(X))$.
\end{defn}

Observe that having constructed $U = U(M^\cR(X); \cY, \Delta, \cT),$ we have that  $(U,M^\cR(X))$ is $(\eta_m,\rho_m)$ interleaved.  
This implies that an analogue of Corollary~\ref{cor:subsample_interleaving} provides a quantitative comparison of the associated persistence diagrams. 
An explicit description of this corollary is probably of limited interest.
For many implementations, the computational bottleneck for obtaining a persistence diagram is the memory constraint associated with the Vietoris-Rips complex $\cR(X,t)$.
Thus, a desirable strategy is to: compute $M^\cR(X)$ over an interval $[0,t_0]$; downsample to $Y_1\subset X$; compute $U_1$ using $M^\cR(Y_1)$ over an interval $[t_0,t_1]$; downsample to $Y_2\subset Y_1$; and repeat the process.
An open question is how to optimize the choice of the locations $t_i$ of downsampling, and the $\delta_i \geq 0$ used to construct the downsampled sets $\setof{Y_i}_{i=1}^{m}$.

\subsection{A comparison of approximations of Vietoris-Rips and \u{C}ech filtrations}
\label{sec:comparison}
\begin{table}[t]
\label{table:interleavings}
\centering
 \renewcommand{\arraystretch}{1.5}
\begin{tabular}{| L | L | c | c |}\hline
\multicolumn{4}{| c |}{{\bf Point clouds in $\R^n$} } \\
\hline
{\it Approximate complex} & {\it Complex of Interest} & $\tau(t)$ & $\sigma(t)$ \\
\hline
Vietoris-Rips \cite[Proof of Vietoris-Rips Lemma.]{edelsbrunner} & \v{C}ech & $t\sqrt{\frac{2n}{n+1}}$ & $t$ \\
\hline
Net-tree \cite[Proof of Proposition 20]{botnan_spreeman} & \v{C}ech & $t$ & $(1+\varepsilon)^2t$ \\
\hline
Graph induced complex \cite[Proposition 2.8]{dey_fan_wang_2} & Vietoris-Rips & $t + 2\varepsilon$ & $t$ \\
\hline
Sparsified Vietoris-Rips \cite[Claim 6.1]{dey_fan_wang} & Vietoris-Rips & $t$ & $(1+\varepsilon)t$ \\
\hline
\multicolumn{4}{| c |}{{\bf Point clouds in an arbitrary metric space} } \\
\hline
{\it Approximate complex} & {\it Complex of Interest} & $\tau(t)$ & $\sigma(t)$ \\
\hline
Vietoris-Rips & \v{C}ech filtration & $2t$ & $t$ \\
\hline
Relaxed Vietoris-Rips \cite[Lemma 4]{sheehy} & Vietoris-Rips & $t$ & $\left(\frac{1}{1-2\varepsilon}\right)t$ \\
\hline
Sparse weighted Rips \cite[Lemma 6.13]{buchet} & Vietoris-Rips & $t$ & $\left( \frac{1 + \sqrt{1+\delta^2}\varepsilon}{1 - \varepsilon}\right)t$ \\
\hline
\end{tabular}
\caption{\emph{A table of approximations for Vietoris-Rips and \u{C}ech filtrations for point clouds.}
The first column gives the approximation and a reference to the construction of the approximation (i.e. the explicit construction of the interleavings), and the second column gives the complex that is being approximated.
The third and fourth columns list the translation maps for $(\tau, \sigma)$-interleavings of the associated persistence modules induced by taking homology of the associated filtrations.
The values $\delta, \varepsilon \geq 0$ are parameters specified by the approximations where applicable.}
\label{table:approx}
\end{table}

In applications, a persistence module is associated to  a finite metric space $(X,d)$ via the construction of a simplicial complex. 
There is typically a natural choice of complex for the problem of interest (e.g.  \u{C}ech complex).  
However, the Vietoris-Rips complex is usually more manageable than the  \u{C}ech complex. 
Table~\ref{table:approx} provides a list of examples of pairs of filtrations and their approximations that have appeared in the literature. 
Proposition~\ref{prop:Matching} provides a general quantitative comparison of  persistence diagrams given an interleaving between the associated persistence modules. 

Table~\ref{table:approx} explicitly  defines interleavings and the interested reader can derive the bounds for the matching of persistence diagrams using Corollary~\ref{cor:genalgstab_invertible_pi} since all of the maps $\tau$ and $\sigma$ in the table are bijections.
Note that Proposition~\ref{prop:interleaving_composition} enables one to keep track of errors even when multiple approximation steps have been used. For example, say that one desires to make a statement about the persistence diagram corresponding to the \u{C}ech filtration of a finite point cloud in $\R^n$ via the persistence diagram corresponding to a filtration of the Sparsified Vietoris-Rips complex from \cite{dey_fan_wang} with parameter $\varepsilon$. Then the $(\eta, \rho)$-interleaving between the persistence module induced by the \u{C}ech filtration and the persistence module induced by the Sparsified Vietoris-Rips complex filtration is given by $\eta(t) = t\sqrt{\frac{2n}{n+1}}$ and $\rho(t) = (1 + \varepsilon)t$, where an intermediate approximation uses the Veitoris-Rips complex filtration (e.g. the translation pairs that one plugs into Proposition~\ref{prop:interleaving_composition} come from the first and last rows of the first section of the table).

\section{Acknowledgements}
\label{sec:acknowledgements}
R. L. would like to thank Chuck Weibel, Michael Lesnick, and Ulrich Bauer for the many insightful discussions that led to the results presented in this paper.

M. K. was supported by ERC project GUDHI (Geometric Understanding in Higher Dimensions).
R. L. was supported by DARPA contracts HR0011-17-1-0004 and HR0011-16-2-0033.
S. H. and K. M. were partially supported by grants NSF-DMS-1125174, 1248071, 1521771,
NIH 1R01GM126555-01 and DARPA contracts HR0011-16-2-0033, FA8750-17-C-0054.


\section*{References}
\label{sec:References}
\bibliographystyle{plain}

\end{document}